\newtheorem{thm}{Theorem}[section]
\newtheorem{lemma}[thm]{Lemma}
\newtheorem{cor}[thm]{Corollary}
\theoremstyle{definition}
\newtheorem{defn}[thm]{Definition}
\newtheorem{hypo}[thm]{Hypothesis}
\newtheorem{ex}[thm]{Example}
\theoremstyle{remark}
\newtheorem{rmk}[thm]{Remark}
\newtheoremstyle{named}{}{}{\itshape}{}{\bfseries}{.}{.5em}{#1 \thmnote{#3}}
\theoremstyle{named}
\newtheoremstyle{named}{}{}{\itshape}{}{\bfseries}{.}{.5em}{#1 \thmnote{#3}}
\theoremstyle{named}
\newtheoremstyle{named}{}{}{\itshape}{}{\bfseries}{.}{.5em}{#1 \thmnote{#3}}
\theoremstyle{named}
\newtheorem*{namedhypo}{Hypothesis}
\newcommand{\Q}{\mathbb{Q}}
\newcommand{\Z}{\mathbb{Z}}
\newcommand{\F}{\mathbb{F}}
\newcommand{\calO}{\mathcal{O}}
\newcommand{\Sel}{\text{Sel}}
\newcommand{\corank}{\text{corank}}
\newcommand{\cyc}{\text{cyc}}
\newcommand{\Tate}{\text{Tate}}
\newcommand{\Kcyc}{K_{\mathrm{cyc}}}
\DeclareMathOperator{\Ima}{Im}
\DeclareMathOperator{\Gal}{Gal}
\newcommand{\frakM}{\mathfrak{M}}
\newcommand{\pr}{\text{pr}}
\DeclareMathOperator{\Hom}{\text{Hom}}
\DeclareMathOperator{\Ext}{Ext}
\DeclareFontFamily{U}{wncy}{}
\DeclareFontShape{U}{wncy}{m}{n}{<->wncyr10}{}
\DeclareSymbolFont{mcy}{U}{wncy}{m}{n}
\DeclareMathSymbol{\Sha}{\mathord}{mcy}{"58}
\title{\vspace{-2cm} Congruent elliptic curves over some $p$-adic Lie extensions}
\author{Tam Nguyen \& Ramdorai Sujatha}
\date{\today}
\newcommand{\calR}{\mathcal{R}}
\newcommand{\frakR}{\mathfrak{R}}
\newcommand{\ac}{\text{ac}}
\newcommand{\fine}{\text{fine}}
\newcommand{\coker}{\text{coker}}
\newcommand{\HIw}{H_{\text{Iw}}}
\newcommand{\rk}{\text{rk}}
\begin{document}
	
	\maketitle
	
	\section{Introduction}
	
	Iwasawa theory has proved to be a fruitful tool in the study of the arithmetic of elliptic curves. It is of interest therefore to delve into a deeper study of these invariants and a natural direction is to study the invariance properties of these invariants for congruent elliptic curves. Of particular interest are the $\mu$-invariants and $\lambda$-invariants associated to finitely generated torsion modules over Iwasawa algebras. Iwasawa invariants attached to modules over Iwasawa algebras that arise in this approach play a key role. Let $p$ be an odd prime number. The dual  $p^{\infty}$-Selmer groups of elliptic curves over $\Z_p$-extensions form a salient
	class whose invariants yield interesting arithmetic information. Suppose $E_1$ and $E_2$ are elliptic curves over the field $\Q$ of rational numbers such that
	$E_1[p] \simeq E_2[p]$ as $\Gal(\overline{\Q}/\Q)$-module. Such elliptic curves are called congruent, or residually isomorphic. In \cite{GV00}, Greenberg-Vatsal studied the Iwasawa theory of such elliptic curves over the cyclotomic $\Z_p$ extension under the assumption that the curves have good ordinary reduction at $p$. Under this assumption, it is a classical result due to Mazur that the dual $p^{\infty}$-Selmer groups are finitely generated modules over the corresponding Iwasawa algebra associated to the cyclotomic $\Z_p$-extension of $\Q$. Greenberg and Vatsal proved that the vanishing of the $\mu$-invariant depends only on the mod-$p$ representation. More precisely, let $\mu(E_i) ~(i=1,2)$ denote the $\mu$-invariant of the dual  $p^{\infty}$- Selmer group of the elliptic curve $E_i$ over the cyclotomic $\Z_p$-extension.
	It is proven in {\it loc.cit}  that $\mu(E_1) = 0$ implies $\mu(E_2) = 0$. Moreover, they give a formula for the difference of the $\lambda$-invariants $\lambda(E_2) - \lambda(E_1)$ in terms of explicit local constants of $E_1$ and $E_2$. Various analogues and generalizations of \cite{GV00} have been obtained in different settings (see \cite{EPW06, Kim09, PW11, CKL17, FS18}).
	
	In the context of fine Selmer groups, J. Coates and the second author conjectured that the $\mu$-invariant over the cyclotomic extension always vanishes \cite[Conjecture A]{CS05}. This conjecture is known to be invariant among residually isomorphic elliptic curves over the cyclotomic $\Z_p$-extension \cite{Suj10, FS18}. The key point in \cite{Suj10} as well as \cite{FS18} is the reformulation of Conjecture A in terms of the vanishing of the $2^{nd}$ Galois cohomology of the residual representation. It is known that the analogue of Conjecture A does not hold over general $\Z_p$-extensions. However, between two residually isomorphic elliptic curves, it is known that $\mu = 0$ for one implies the same for the other \cite[Theorem 1.1(c)]{KM22}. In this paper, we prove that the analogue of Conjecture A over a general $\Z_p$-extension is equivalent to the vanishing of the $2^{nd}$ Galois cohomology of the residual representation as in the cyclotomic case (see Section \ref{main-1}).
	
	For classical Selmer groups, we study the property $\mu = 0$ and compare $\lambda$-invariants for congruent elliptic curves over general $\Z_p$-extensions of an imaginary quadratic field $K$ in Section \ref{classical} (see theorems \ref{mu}, \ref{lambda}). This is also studied in \cite{Hac11, Kid18} using imprimitive Selmer groups in the spirit of Greenberg-Vatsal \cite{GV00}. In contrast, our techniques are based on \cite{CS23} and \cite{Ham23} where the authors define the fine residual Selmer groups to study congruences. Consider the compositum $F_\infty$ of all $\Z_p$-extensions of $K$. It is known that $F_\infty$ is a $\Z_p^2$-extension over $K$. Now, let $K_\infty/K$ be an arbitrary intermediary $\Z_p$-subextension of $F_\infty$ and let $H = \Gal(F_\infty/K_\infty)$. We give a formula for comparing $\Lambda(H)$-coranks of Selmer groups over $F_\infty$ when $\mu = 0$ over $K_\infty$ (see Theorem \ref{thm:Z_p^2}). 
	
	For an integer $m$, the False-Tate extension  $F_\infty = \Q(\mu_{p^\infty}, m^{1/p^{\infty}})$ is a rank-$2$ $p$-adic Lie extension over $\Q(\mu_p)$ whose Galois group is isomorphic to a semi-direct product $\Z_p \rtimes \Z_p$. The False-Tate extension admits $\Q(\mu_{p^\infty})$ as an intermediary $\Z_p$-subextension and we denote $H = \Gal(F_\infty/\Q(\mu_{p^\infty})).$ The classical Selmer group is known to vanish over the False-Tate extension under certain hypotheses \cite[Corollary A.11]{DDCS07}. When this occurs, the underlying elliptic curve is called regular over such an extension. In a different scenario, where the classical Selmer group acquires rank $1$ over $\Lambda(H)$, there is a systematic growth in $\Z_p$-rank of Selmer groups along the finite layers of the False-Tate extension (\cite[Theorem A.38]{DDCS07}, \cite[Proposition 4.7]{CFKS10}). In Section \ref{False-Tate}, we study how both of these behaviours vary among congruent elliptic curves by levaraging results from the work of the second author with S. Shekhar \cite{SS14} (see theorems \ref{cong:FT2} and \ref{cong:growth}).

	\section{Preliminaries} \label{prelim}
	For a number field $K$, let $K_\infty/K$ be a $\Z_p$-extension of $K$, i.e. a Galois extension where $\Gal(K_\infty/K) \simeq \Z_p$. We will denote by $K_n$ the subfield fixed by $p^n \Z_p \subset \Z_p$, so that $\Gal(K_n/K) \simeq \Z/p^n\Z$. 
	
	\subsection{Selmer groups and fine Selmer groups}
	Let $K$ be a number field and $E/K$ be an elliptic curve which is ordinary at every prime above $p$. For a finite set of primes $S$, let $K_S$ be the maximal abelian extension that is unramified outside of $S$. For every Galois extension $L/K$ contained in $K_S$, one may consider the set $S_L$ of primes in $L$ lying above $S$. For a prime $v$ in $L$, we will write $v \mid S$ to mean that $v \in S_L$. We will write $H^1(L/K, \cdot)$ in place of $H^1(\Gal(L/K), \cdot)$, and $H^1(K, \cdot)$ in place of $H^1(\Gal(\overline{K}/K), \cdot)$. We will write $L_v$ for the completion of $L$ at a place $v$, and denote by $\kappa_v$ the local Kummer map at $v$.
	\begin{defn} \label{def:Sel}
		We define the classical Selmer group $\Sel(L, E[p^\infty])$ \cite[Section 2]{Gre99} as the kernel of the natural restriction map
		\[H^1(K_S/L, E[p^\infty]) \rightarrow \bigoplus_{v \mid S, v \nmid p} H^1(L_v, E[p^\infty]) \oplus \bigoplus_{v \mid p} \frac{H^1(L_v, E[p^\infty])}{\Ima(\kappa_v)} \]
		and the fine Selmer group $\Sel_\fine(L, E)$ \cite[Section 3]{CS05}  as the kernel of
		\[H^1(K_S/L, E[p^\infty]) \rightarrow \bigoplus_{v \mid S, v \nmid p} H^1(L_v, E[p^\infty]) \oplus \bigoplus_{v \mid p} H^1(L_v, E[p^\infty]).\]
	\end{defn}
	
	For any Galois extension $L/K$, define the $p$-primary part of the Tate-Shafarevich group as
	\[\Sha(E/L)[p^\infty] = \ker(H^1(K_S/L, E)[p^\infty] \rightarrow \bigoplus_{v \mid S} H^1(L_v, E)[p^\infty]).\]
	There is a short exact sequence
	$$0 \rightarrow E(L) \otimes \Q_p/\Z_p \rightarrow \Sel(L, E[p^\infty]) \rightarrow \Sha(E/L)[p^\infty] \rightarrow 0,$$
	where $E(L) \otimes \Q_p/\Z_p \rightarrow \Sel(L, E[p^\infty])$ is induced by the Kummer map and $\Sel(L, E[p^\infty]) \rightarrow \Sha(E/L)[p^\infty]$ is induced by $H^1(K_S/L, E[p^\infty]) \rightarrow H^1(K_S/L, E)[p^\infty]$.

	Let $K_\infty/K$ be a $\Z_p$-extension. Note that
	$$\Sel(K_\infty, E[p^\infty]) = \varinjlim_{n} \Sel(K_n, E[p^\infty]),$$
	as well as
	$$\Sel_\fine(K_\infty, E[p^\infty]) = \varinjlim_{n} \Sel_\fine(K_n, E[p^\infty]).$$
	
	\subsection{Modules over the Iwasawa algebra}
	
	For a $\Z_p$-module $A$, we will denote by \[A^\vee = \Hom(A, \Q_p/\Z_p)\] to be the Pontryagin dual of $A$. The $p$-primary groups $\Sel(K_\infty, E[p^\infty]), \Sel_\fine(K_\infty, E[p^\infty])$ have the discrete topology as a $\Z_p$-module and each of $X(K_\infty, E[p^\infty]) := \Hom(\Sel(K_\infty, E[p^\infty]), \Q_p/\Z_p)$, $Y(K_\infty, E[p^\infty]) := \Sel_\fine(K_\infty, E[p^\infty])^\vee$ is a compact $\Z_p$-module. The natural actions of $G = \Gal(K_\infty/K)$ on $\Sel(K_\infty, E[p^\infty]), \Sel_{\fine}(K_\infty, E[p^\infty])$ and their duals are continuous, and these groups can be made into modules over the Iwasawa algebra \cite[Section 1]{Gre99}
	\[\Z_p \llbracket \Gal(K_\infty/K) \rrbracket := \varprojlim_n \Z_p [\Gal(K_n/K)] \simeq \Z_p \llbracket T \rrbracket.\]
	We will denote by $\Lambda(G)$  the Iwasawa algebra attached to the group $G$, and let $\Omega(G) = \Lambda(G)/p\Lambda(G) \simeq \F_p\llbracket T \rrbracket$ be its quotient modulo $p$.
	The group $G$ will often be clear from the context, in which case we may denote $\Lambda(G)$ and $\Omega(G)$ by $\Lambda$ and $\Omega$, respectively. 
	
	For a finitely-generated $\Lambda(G)$-module $X$, the most important invariants are the $\mu$- and $\lambda$-invariants, denoted $\mu_G(X)$ and $\lambda_G(X)$ respectively. We will suppress $G$ from the notation when the context is clear.
	
	\subsection{Euler characteristics}
		
	\begin{defn} \label{def:Euler-char}
		Let $G$ be a topological group, and $M$ be a topological $G$-module on which $G$ acts continuously. Suppose that $H^i(G, M)$ is finite for each $i \geq 0$ and $H^i(G, M) = 0$ for $i$ sufficiently large. We define the Euler characteristic $\chi(G, M)$ via the formula $$\chi(G, M) := \prod_{i \geq 0} (\# H^i(G, M))^{(-1)^{i}}.$$ 
	\end{defn}
	\subsection{Hypotheses}
	Throughout this article, we assume that $E/K$ has good ordinary reduction at every prime above $p$:
	
	\begin{namedhypo}[(ord)] \label{ord}
		$E/K$ has good ordinary reduction at every prime above $p$. 
	\end{namedhypo}

	We also define the following hypotheses for an elliptic curve $E/K$:

	\begin{namedhypo}[(weak-Leop)] \label{weak-Leop}
		$H^2(K_S/K_\infty, E[p^\infty]) = 0$. 
	\end{namedhypo}
	
	\begin{namedhypo}[(van)] \label{van}
		$E(K_v)[p] = 0$ for every prime $v \in S$ that is infinitely split in the extension $K_\infty/K$. 
	\end{namedhypo}

	\begin{namedhypo}[(cotor)] \label{cotor}
		$\Sel(K_\infty, E[p^\infty])$ is $\Lambda(G)$-cotorsion.
	\end{namedhypo}

	\begin{namedhypo}[(surj)] \label{surj}
		$E$ does not admit complex multiplication and the natural map $\Gal(K(E[p^\infty])/ K) \rightarrow GL_2(\Z_p)$ is surjective.  
	\end{namedhypo}

	\begin{namedhypo}[($p$-ram)] \label{p-ram}
		Every prime of $K$ above $p$ ramifies in $K_\infty/K$.
	\end{namedhypo}

	\section{Fine Selmer groups over general $\Z_p$-extensions}  \label{fineSelmer}
	In \cite{FS18}, the authors study fine Selmer groups for congruent elliptic curves over certain $p$-adic Lie extensions containing the cyclotomic extension. Such a result has been extended to a wider class of $p$-adic Lie extensions (which do not necessarily contain the cyclotomic one) in \cite{KM22}. We stress that our arguments are based on \cite{Suj10}. Moreover, the properties in this section are studied in connection with classical Selmer groups which are discussed in Section \ref{classical}.

	In this section, let $K$ be a number field, $K_\infty = \cup_{n \geq 0} K_n$ be an arbitrary $\Z_p$-extension over $K$ and $G = \Gal(K_\infty/K)$. Let $E/K$ be an elliptic curve where \nameref{weak-Leop} holds.

	\begin{thm} \label{thm:weak-Leop}
		Assume that the $K_\infty/K$ satisfies \nameref{p-ram}, and $E/K$ satisfies hypothesis \nameref{cotor}. Then $E/K$ also satisfies \nameref{weak-Leop}.
	\end{thm}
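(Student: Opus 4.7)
The plan is to follow Greenberg's approach: combine the Poitou--Tate global duality exact sequence at the level of $K_\infty$ with the $\Lambda$-version of the global Euler--Poincar\'e characteristic formula, then use hypothesis \nameref{cotor} to extract a corank relation that forces $H^2(K_S/K_\infty, E[p^\infty]) = 0$. Hypotheses \nameref{ord} and \nameref{p-ram} enter through the local computations at primes above $p$.

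First, I would verify that every local $H^2$ term vanishes. For $v \in S$ with $v \nmid p$, the field $K_{\infty,v}$ contains the unramified $\Z_p$-extension of $K_v$, so its $p$-cohomological dimension for $p$-primary coefficients is at most $1$; hence $H^2(K_{\infty,v}, E[p^\infty]) = 0$. For $v \mid p$, hypothesis \nameref{p-ram} ensures $K_{\infty,v}/K_v$ is a ramified (hence deeply ramified, in the sense of Coates--Greenberg) $\Z_p$-extension, and combined with \nameref{ord} this yields $H^2(K_{\infty,v}, E[p^\infty]) = 0$.

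Next, I would write down the Poitou--Tate long exact sequence over $K_\infty$; with the local $H^2$'s killed by the previous step, it reduces to
\[
0 \to \Sel(K_\infty, E[p^\infty]) \to H^1(K_S/K_\infty, E[p^\infty]) \xrightarrow{\phi_\infty} \bigoplus_{v \in S_{K_\infty}} \mathcal{H}_v \to Y^\vee \to H^2(K_S/K_\infty, E[p^\infty]) \to 0,
\]
where $Y$ is the dual Selmer module attached to the Tate module $T_pE$. Taking $\Lambda$-coranks and inserting the local corank computations of each $\mathcal{H}_v$ (which under \nameref{ord} and \nameref{p-ram} contribute precisely the amount predicted by the $\Lambda$-version of the global Euler--Poincar\'e formula), the cotorsion hypothesis \nameref{cotor} collapses the relation and yields $\corank_\Lambda H^2(K_S/K_\infty, E[p^\infty]) = 0$.

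Finally, to upgrade ``corank zero'' to outright vanishing, I would invoke the standard fact (due to Greenberg, under \nameref{ord} and \nameref{p-ram}) that the $\Lambda$-module $H^2(K_S/K_\infty, E[p^\infty])^\vee$ carries no nonzero finite $\Lambda$-submodule, equivalently that $H^2(K_S/K_\infty, E[p^\infty])$ is $\Lambda$-cofree; a cofree cotorsion $\Lambda$-module is necessarily zero. The main obstacle is this last cofreeness input: without it, the corank computation only yields vanishing up to a potentially nontrivial torsion quotient. Establishing cofreeness rests again on the local $H^2$ vanishings of the first step (especially at primes above $p$, which is where \nameref{p-ram} is indispensable), together with a Hochschild--Serre analysis in the tower $K_\infty/K_n/K$ to identify the dual of $H^2(K_S/K_\infty, E[p^\infty])$ as a torsion-free $\Lambda$-module.
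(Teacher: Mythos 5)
Your overall strategy is sound and arrives at the correct conclusion through essentially the same two pillars as the paper: show $\corank_\Lambda H^2(K_S/K_\infty, E[p^\infty]) = 0$ by corank bookkeeping, then invoke $\Lambda$-cofreeness of $H^2$ to upgrade cotorsion to vanishing. However, your route is not quite the paper's, and one of your intermediate claims has a gap that should be flagged.

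The paper does not use the five-term Cassels--Poitou--Tate sequence nor the local $H^2$-vanishing at all. Instead it works directly with the three-term defining sequence $0 \to \Sel(K_\infty, E[p^\infty]) \to H^1(K_S/K_\infty, E[p^\infty]) \to \bigoplus_{v\in S} J_v^1$, computes $\corank_\Lambda \bigoplus_v J_v^1 = [K:\Q]$ (Greenberg for $v\nmid p$, Coates--Greenberg via \nameref{p-ram} and \nameref{ord} for $v\mid p$), uses \nameref{cotor} to get $\corank_\Lambda H^1 \le [K:\Q]$, pairs this with Perrin-Riou's lower bound $\corank_\Lambda H^1 \ge [K:\Q]$, and then applies Greenberg's inequality $\corank_\Lambda H^1 - \corank_\Lambda H^2 \ge [K:\Q]$ to squeeze out $\corank_\Lambda H^2 = 0$. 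Your version instead runs through the full five-term sequence and obtains first that the compact dual Selmer module $Y$ is $\Lambda$-torsion, then deduces cotorsion of $H^2$ as a quotient of $Y^\vee$. Both are valid; the paper's is tighter in that it isolates $\corank H^2$ with one fewer intermediate object.

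The concrete gap is in your step establishing local $H^2$-vanishing for $v \nmid p$. You argue that $K_{\infty,v}$ contains the unramified $\Z_p$-extension of $K_v$, hence has $p$-cohomological dimension at most $1$. For a general $\Z_p$-extension $K_\infty/K$ this can fail: primes $v\in S$ with $v\nmid p$ may split completely in $K_\infty/K$, in which case $K_{\infty,w} = K_v$ is a finite local field of $p$-cohomological dimension $2$. One can still prove $H^2(K_v, E[p^\infty]) = 0$ here, but the reason is different: by local Tate duality and the Weil pairing, $H^2(K_v, E[p^n]) \cong E(K_v)[p^n]^\vee$ with transition maps dual to multiplication by $p$, and since $E(K_v)[p^\infty]$ is finite these transition maps are eventually zero, so the direct limit vanishes. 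You should separate these two cases explicitly; as written, the argument silently assumes $v$ does not split completely. Likewise, the cofreeness you invoke at the end is precisely Greenberg's Proposition 4 (cited as \cite[Proposition 4]{Gr89} in the paper); it is cleaner to cite that directly than to appeal to an unspecified Hochschild--Serre analysis.
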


	\begin{proof}
		We will show that $H^2(K_S/K_{\infty}, E[p^\infty]) = 0$ using corank computations and the following defining sequence:
		\[0 \rightarrow \Sel(K_{\infty}, E[p^\infty]) \rightarrow H^1(K_S/K_{\infty}, E[p^\infty]) \rightarrow \bigoplus_{v \in S} J_v^{1}(K_{\infty}, E[p^\infty])\]
		where we denote $J_v^{1}(K_\infty, E[p^\infty]) = \bigoplus_{w \mid v} H^1(K_{\infty, w}, E[p^\infty])$ for a place $v$ in $K$ such that $v \nmid p$, and $J^{1}_v(K_\infty, E[p^\infty]) = \bigoplus_{w \mid v} \frac{H^1(K_w, E[p^\infty])}{\Ima(\kappa_w)}$ for $v \mid p$.
		
		Indeed, for $v \nmid p$, $\corank_{\Lambda(G)} J_v^{1}(K_\infty, E[p^\infty]) = \corank_{\Lambda(G_v)} H^1(K_{\infty, w}, E[p^\infty]) = 0$ for any choice of $w \mid v$ by \cite[Proposition 2]{Gr89}.
		For $v \mid p$, because $v$ is ramified in $K_\infty/K$ by \nameref{p-ram}, one can apply \cite[Theorem 4.9]{CG96} to obtain $\corank_{\Lambda} \bigoplus_{v \mid p} J_v^{1} (K_{\infty}, E[p^\infty]) = [K:\Q]$. Combining these calculations gives $\corank_{\Lambda} \bigoplus_{v \in S} J_v^{1} (K_{\infty}, E[p^\infty]) = [K:\Q]$. 
		
		It then follows from the defining exact sequence above that $\corank_{\Lambda} H^1(K_S/K_{\infty}, E[p^\infty]) \leq [K:\Q]$. Moreover, \cite[Theorem 3.1.4]{P-R92} gives $\corank_{\Lambda} H^1(K_S/K_{\infty}, E[p^\infty]) \geq [K:\Q]$ and hence the equality must hold. 
		
		We have the following formula (\cite[Proposition 3]{Gr89}):
		\[\corank_{\Lambda} H^1(K_S/K_\infty, E[p^\infty]) - \corank_{\Lambda} H^2(K_S/K_\infty, E[p^\infty]) \geq [K: \Q],\]
		because the $p$-adic representation induced by $E$ is odd (see \cite[p.12]{Kid18} for a similar argument).
		It then follows that $\corank_{\Lambda}{H^2(K_S/K_{\infty}, E[p^\infty])} = 0$.
		
		Moreover, \cite[Proposition 4]{Gr89} states that $H^2(K_S/K_{\infty}, E[p^\infty])$ is $\Lambda$-cofree. It immediately follows that $H^2(K_S/K_{\infty}, E[p^\infty]) = 0$.
	\end{proof}
	\begin{rmk}
		We remark that the hypothesis \nameref{p-ram} is satisfied for the cyclotomic $\Z_p$-extension $K_\cyc/K$ since $K_\cyc = K \cdot \Q_\cyc$ and $\Q_\cyc/\Q$ is ramified at $p$, and in fact totally ramified. However, it does not always hold for a general $\Z_p$-extension $K_\infty/K$.
	\end{rmk}
	We give the following example for Theorem \ref{weak-Leop}.
	
	\begin{ex}
		Suppose that $E/\Q$ is an elliptic curve which is ordinary at $p$ whose discriminant $N$ can be written as $N =N^{-} N^{+}$ where $N^{+}$ (resp $N^{-}$) is divisible by the primes which are split (resp. inert) in $K$. Let $K$ be an imaginary quadratic field and $K_\infty = K_\ac$, the anticyclotomic $\Z_p$-extension of $K$. For a fixed elliptic curve $E$, it is a result of Bertolini-Darmon \cite{BD05} that $\Sel(K_\ac, E[p^\infty])$ is $\Lambda(G)$-cotorsion for all but finitely many primes $p$. Suppose also that $p$ is inert in $K$, which implies that $p$ ramifies in $K_\infty/K$ by \cite[Lemma 13.3]{Was97}. Then the theorem above implies \nameref{weak-Leop} for $E/K$.
	\end{ex}

	\begin{defn} \label{Iw-coh}
		For a compact $\Z_p$-module $M$ with a continuous $\Gal(K_\infty/K)$-action, define the Iwasawa cohomology to be
		$$\HIw^i(K_\infty, M) = \varprojlim_{n} H^i(K_S/K_n, M)$$
		with respect to corestriction maps.
	\end{defn}

	We state the following extension of \cite[Proposition 4.6]{Suj10}, which is the main theorem of this chapter.
	
	\begin{thm} \label{main-1}
		Assume that $E/K$ satisfies \nameref{weak-Leop} and \nameref{van}. 
		Then the fine Selmer group $Y(K_\infty, E[p^\infty])$ is finitely generated over $\Z_p$ (or, equivalently, has trivial $\mu$-invariant) if and only if $H^2(K_S/K_\infty, E[p]) = 0$. 
	\end{thm}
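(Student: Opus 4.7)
The plan is to adapt \cite[Proposition 4.6]{Suj10} from the cyclotomic setting to a general $\Z_p$-extension, via a two-stage reduction: first from the vanishing of $\mu(Y)$ to the finiteness of a mod-$p$ fine Selmer group, and then from that finiteness to the vanishing of $H^2(K_S/K_\infty, E[p])$.

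\emph{Stage 1 (reduction to the mod-$p$ fine Selmer group).} By Nakayama's lemma for the compact local ring $\Lambda$, the finitely generated $\Lambda$-module $Y(K_\infty, E[p^\infty])$ is finitely generated over $\Z_p$ (equivalently, has $\mu(Y) = 0$) if and only if $Y/pY$ is finite; by Pontryagin duality this is equivalent to $\Sel_\fine(K_\infty, E[p^\infty])[p]$ being finite. To compare this $p$-torsion with $\Sel_\fine(K_\infty, E[p])$, I would apply the snake lemma to the diagram of defining sequences for the two fine Selmer groups, with vertical maps induced by the long exact cohomology sequence of $0 \to E[p] \to E[p^\infty] \xrightarrow{p} E[p^\infty] \to 0$. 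The middle vertical is surjective with kernel $E(K_\infty)[p^\infty]/p$, while the right vertical kernel is $\bigoplus_{v \in S,\, w \mid v} E(K_{\infty,w})[p^\infty]/p$. Hypothesis \nameref{van} guarantees that primes infinitely split in $K_\infty/K$ contribute trivially (since $K_{\infty,w} = K_v$ there, and $E(K_v)[p] = 0$ forces $E(K_v)[p^\infty] = 0$), while the finitely split primes contribute only finitely many finite terms. Thus both kernels are finite, and the snake lemma yields that $\Sel_\fine(K_\infty, E[p]) \to \Sel_\fine(K_\infty, E[p^\infty])[p]$ has finite kernel and cokernel. Hence $\mu(Y) = 0$ if and only if $\Sel_\fine(K_\infty, E[p])$ is finite.

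\emph{Stage 2 (reduction to vanishing of $H^2$).} I would invoke the Iwasawa-theoretic Poitou--Tate exact sequence applied to the self-dual finite module $E[p]$ (via the Weil pairing), obtained by passing to a suitable limit of the classical nine-term sequence at the finite layers $K_n$:
\[
0 \to \Sel_\fine(K_\infty, E[p])^\vee \to H^2(K_S/K_\infty, E[p]) \to \bigoplus_{v \in S} \bigoplus_{w \mid v} H^2(K_{\infty,w}, E[p]) \to E(K_\infty)[p]^\vee \to 0.
\]
Local Tate duality identifies $H^2(K_{\infty,w}, E[p])$ with the Pontryagin dual of $E(K_{\infty,w})[p]$, which vanishes at infinitely split primes by \nameref{van} and is finite elsewhere, so the local sum is finite. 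The direction $H^2(K_S/K_\infty, E[p]) = 0 \Rightarrow \Sel_\fine(K_\infty, E[p]) = 0$ is immediate. For the converse, \nameref{weak-Leop} together with the cohomology long exact sequence provides an isomorphism $H^2(K_S/K_\infty, E[p]) \cong H^1(K_S/K_\infty, E[p^\infty])/p$. The structure theorem for finitely generated $\Lambda$-modules applied to $H^1(K_S/K_\infty, E[p^\infty])^\vee$ shows that its $p$-torsion is pseudo-isomorphic to $\Omega^s$, where $s$ is the number of $\Lambda/p^{m_i}$-summands; hence $H^2(K_S/K_\infty, E[p])$ is either finite (when $s = 0$) or has strictly positive $\Omega$-corank. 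Combined with the finiteness of the local $H^2$-sum in the Poitou--Tate sequence, finiteness of $\Sel_\fine(K_\infty, E[p])$ forces $H^2(K_S/K_\infty, E[p])$ to be finite and hence, by the dichotomy, to vanish, provided one can exclude nonzero finite $\Lambda$-submodules in $H^1(K_S/K_\infty, E[p^\infty])^\vee$. This last point should follow from a Greenberg-style argument exploiting the $\Lambda$-cofreeness of $H^2(K_S/K_\infty, E[p^\infty])$ cited in the proof of Theorem~\ref{thm:weak-Leop}.

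\emph{Main obstacle.} The principal difficulty lies in Stage 2, specifically in setting up the Iwasawa Poitou--Tate sequence in the generality of an arbitrary $\Z_p$-extension and in carefully controlling the local $H^2$ contributions (especially at primes above $p$, where the local $\Z_p$-extensions can be intricate). The most delicate point is the upgrade from finiteness of $H^2(K_S/K_\infty, E[p])$ to its vanishing, which requires ruling out nonzero finite $\Lambda$-submodules in $H^1(K_S/K_\infty, E[p^\infty])^\vee$; the cyclotomic arguments of \cite{Suj10} exploit special features of $K_\cyc$ that must be replaced by the hypothesis \nameref{van} and the cofreeness input from Theorem~\ref{thm:weak-Leop}.
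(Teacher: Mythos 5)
Your Stage 1 (relating $\mu(Y)=0$ to finiteness of a mod-$p$ fine Selmer group via a snake-lemma control argument, with \nameref{van} killing the contributions at infinitely split primes) is sound. However, your Stage 2 contains a genuine gap, and it is exactly the point where the paper's argument diverges from yours: the paper never works with $\Sel_\fine(K_\infty, E[p])$ (the direct limit over finite layers) at all, but instead with the Iwasawa cohomology group $\HIw^2(K_\infty, E[p]) = \varprojlim_n H^2(K_S/K_n, E[p])$, and it bridges this inverse limit to the direct-limit object $H^2(K_S/K_\infty, E[p])$ via Jannsen's spectral sequence (Lemma~\ref{Iw-Gal}). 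The $\mu$-invariant comparison is then handled via Poitou--Tate applied to $T_p(E)$, giving $\HIw^2(K_\infty, T_p(E))^\vee$ against $\Sel_\fine(K_\infty, E[p^\infty])$ (Lemma~\ref{equal-mu}), and the mod-$p$ reduction is $\HIw^2(K_\infty, E[p]) \simeq \HIw^2(K_\infty, T_p(E))/p$.

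The specific gap is that your displayed ``Iwasawa Poitou--Tate'' sequence for $E[p]$ is not correctly formed when one passes to the limit. The Poitou--Tate nine-term sequence at level $K_n$ is functorial for $K_n \subset K_{n+1}$ only in the sense that restriction on $H^i(K_S/K_n, M)$ corresponds to the dual of \emph{corestriction} on $H^{2-i}(K_S/K_n, M^*)^\vee$; so taking direct limits with restriction on $H^2(K_S/K_n, E[p])$ and on the local $H^2$'s forces the remaining dualized term to become $\bigl(\varprojlim_n H^1(K_S/K_n, E[p])\bigr)^\vee = \HIw^1(K_\infty, E[p])^\vee$, not $\Sel_\fine(K_\infty, E[p])^\vee$. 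As written, your sequence injects the compact module $\Sel_\fine(K_\infty, E[p])^\vee$ into the discrete module $H^2(K_S/K_\infty, E[p])$, which would force $\Sel_\fine(K_\infty, E[p])$ to be finite whenever $H^2$ is, and indeed your own remark that ``$H^2(K_S/K_\infty, E[p])=0 \Rightarrow \Sel_\fine(K_\infty, E[p])=0$ is immediate'' is a symptom of the error: this conclusion is far too strong, since $\Sel_\fine(K_\infty, E[p^\infty])$ can have positive $\Z_p$-corank (positive $\lambda_\fine$) even when $\mu_\fine = 0$, and then $\Sel_\fine(K_\infty, E[p])$ is nonzero. The missing ingredient you would need to repair this is precisely the role played by Jannsen's spectral sequence in the paper: it correctly relates the $\Omega(G)$-finiteness of $\HIw^2(K_\infty, E[p])$ to the vanishing of $H^2(K_S/K_\infty, E[p])$, with the Greenberg-style ``no nonzero finite submodule'' input (derived from \nameref{weak-Leop} via \cite[Proposition 5]{Gr89}) surfacing naturally in the spectral-sequence bookkeeping rather than having to be supplied by hand.
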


	We state the following observation as a corollary.
	\begin{cor} \label{cor:fineSel}
		Let $E_1/K, E_2/K$ be elliptic curves such that $E_1[p] \simeq E_2[p]$ as $\Gal(\overline{K}/K)$-modules and that \nameref{weak-Leop} holds for both $E_1$ and $E_2$, i.e. $H^2(K_S/K_\infty, E_i[p^\infty])= 0$ for $i = 1, 2$. Then $\mu(Y(K_\infty, E_1[p^\infty])) = 0$ if and only if $\mu(Y(K_\infty, E_2[p^\infty])) = 0.$
	\end{cor}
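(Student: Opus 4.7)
The plan is to reduce this corollary to a direct application of Theorem~\ref{main-1}: that theorem characterises the vanishing of the $\mu$-invariant of $Y(K_\infty, E[p^\infty])$ purely in terms of the residual representation $E[p]$, so a residual isomorphism will transfer the characterising condition from $E_1$ to $E_2$.

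First I would verify that the hypotheses of Theorem~\ref{main-1} hold for both $E_i$. Hypothesis \nameref{weak-Leop} is assumed in the statement. Hypothesis \nameref{van} is the condition $E_i(K_v)[p] = 0$ for every $v \in S$ infinitely split in $K_\infty/K$, and since $E_i(K_v)[p]$ depends only on $E_i[p]$ as a $\Gal(\overline{K_v}/K_v)$-module, the residual isomorphism forces \nameref{van} for $E_1$ to be equivalent to \nameref{van} for $E_2$; I would treat it as implicit in the setup that \nameref{van} holds for one, and hence both, of the curves.

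Applying Theorem~\ref{main-1} to each $E_i$ then yields
\[\mu(Y(K_\infty, E_i[p^\infty])) = 0 \quad \Longleftrightarrow \quad H^2(K_S/K_\infty, E_i[p]) = 0.\]
The residual isomorphism $E_1[p] \simeq E_2[p]$ of $\Gal(\overline{K}/K)$-modules factors through the quotient $\Gal(K_S/K)$ (since $E_i[p]$ is unramified outside $S$), and restricting further to $\Gal(K_S/K_\infty)$ induces a canonical isomorphism $H^2(K_S/K_\infty, E_1[p]) \simeq H^2(K_S/K_\infty, E_2[p])$. The biconditional of the corollary is then immediate. The argument is essentially formal once Theorem~\ref{main-1} is in hand, so there is no genuine obstacle; the only subtle point worth flagging is the observation that \nameref{van} descends to a condition on $E[p]$ alone, which is what makes the corollary well-posed under the given hypotheses.
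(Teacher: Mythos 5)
Your proposal is correct and matches the approach the paper intends: the corollary is an immediate consequence of Theorem~\ref{main-1}, since $H^2(K_S/K_\infty, E[p])$ depends only on $E[p]$ as a $\Gal(K_S/K_\infty)$-module and the residual isomorphism transfers that vanishing condition between the two curves. You are also right to flag that \nameref{van} is a hypothesis of Theorem~\ref{main-1} that the corollary's statement omits, and that your observation that \nameref{van} is itself a condition on $E[p]$ (via $E(K_v)[p] = H^0(K_v, E[p])$) is exactly what keeps the corollary well-posed.
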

	
	As in the previous chapter, let $S$ be the finite set of primes in $K$ containing the primes of bad reduction of $E/K$, the primes above $p$ and the primes at infinity. Following the notations of \cite[Lemma 3.1]{CS05}, for a prime $v \in S$, let $K_v^{i}(K_\infty, E[p^\infty]) = \bigoplus_{w \mid v} H^i(K_{\infty, w}, E[p^\infty])$, which are the local cohomology groups of the defining sequence of the fine Selmer group. Moreover, let $U_v$ and $A_v$ respectively be the Pontryagin duals of $K_v^0(K_\infty, E[p^\infty])$ and $H^0(K_{\infty, w}, E[p^\infty])$ for some fixed prime $w$ of $K_\infty$ above $v$. In general, we have $U_v = \Hom_{\Lambda(G_v)}(\Lambda(G), A_v)$ because of the isomorphism $K_v^0(K_\infty, E[p^\infty]) \simeq \Lambda(G) \otimes_{\Lambda(G_v)} H^0(K_{\infty, w}, E[p^\infty])$ and 
	the tensor-hom adjuction. When $v$ is finitely split in $K_\infty/K$, we have $U_v = \Hom_{\Lambda(G_v)}(\Lambda(G), A_v) = \Lambda(G) \otimes_{\Lambda(G_v)} A_v$ because $[G: G_v] < \infty.$

	To obtain Theorem \ref{main-1}, we will use the following lemmas.
	\begin{lemma} \label{equal-mu}
		Suppose that $E/K$ satisfies hypothesis \nameref{van}. Then the $\Lambda$-modules $\HIw^2(K_\infty, T_p(E))$ and $Y(K_\infty, E[p^\infty])$ have the same $\mu$-invariant.
	\end{lemma}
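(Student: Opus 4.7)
The plan is to derive a Poitou--Tate style exact sequence relating $Y(K_\infty, E[p^\infty])$ to $\HIw^2(K_\infty, T_p(E))$ whose error term lies in the local modules $U_v$, and then to exploit \nameref{van} to show those local terms have vanishing $\mu$-invariant.

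First, I would start with the nine-term Poitou--Tate exact sequence at each finite layer $K_n$ and take the inverse limit over $n$, which is exact on compact $\Z_p$-modules. Invoking local Tate duality together with the computation $\varprojlim_n \bigoplus_{w_n \mid v} H^0(K_{n, w_n}, E[p^\infty])^\vee = U_v$, this produces an exact sequence
\[\HIw^1(K_\infty, T_p(E)) \to \bigoplus_{v \in S} \HIw^1(K_{\infty,w}, T_p(E)) \to H^1(K_S/K_\infty, E[p^\infty])^\vee \to \HIw^2(K_\infty, T_p(E)) \to \bigoplus_{v \in S} U_v.\]
On the other hand, Pontryagin-dualizing the defining sequence of the fine Selmer group and using local Tate duality at each $v$ to identify $K_v^1(K_\infty, E[p^\infty])^\vee$ with $\HIw^1(K_{\infty,w}, T_p(E))$ gives
\[\bigoplus_{v \in S} \HIw^1(K_{\infty,w}, T_p(E)) \to H^1(K_S/K_\infty, E[p^\infty])^\vee \to Y(K_\infty, E[p^\infty]) \to 0.\]
Splicing the two identifies $Y(K_\infty, E[p^\infty])$ with $\ker\bigl(\HIw^2(K_\infty, T_p(E)) \to \bigoplus_{v \in S} U_v\bigr)$, yielding the key exact sequence
\[0 \to Y(K_\infty, E[p^\infty]) \to \HIw^2(K_\infty, T_p(E)) \to \bigoplus_{v \in S} U_v.\]

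Next, I would show that $\mu_\Lambda\bigl(\bigoplus_{v \in S} U_v\bigr) = 0$ using \nameref{van}. If $v$ is infinitely split, then $K_{\infty,w} = K_v$ for every $w \mid v$; hypothesis \nameref{van} gives $E(K_v)[p] = 0$, hence $E(K_v)[p^\infty] = 0$, and so $U_v = 0$. Otherwise $[G : G_v] < \infty$, and $A_v$ is a finitely generated $\Z_p$-module: for $v \nmid p$ this is because $E(K_{\infty,w})[p^\infty]$ is finite (residue characteristic differs from $p$), and for $v \mid p$ it follows from the cofinite generation of the $p$-primary torsion of $E$ over a $\Z_p$-extension of a local field. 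Since $\Lambda(G)$ is free of rank $[G : G_v]$ over $\Lambda(G_v)$, the module $U_v = \Lambda(G) \otimes_{\Lambda(G_v)} A_v$ is again finitely generated over $\Z_p$, so $\mu_\Lambda(U_v) = 0$.

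Finally, additivity of $\mu$ in the exact sequence $0 \to Y \to \HIw^2 \to \HIw^2/Y \to 0$, combined with the inclusion $\HIw^2/Y \hookrightarrow \bigoplus_{v \in S} U_v$ and the monotonicity of $\mu$ under submodules, forces $\mu(\HIw^2/Y) = 0$, whence $\mu(Y) = \mu(\HIw^2)$. I expect the main obstacle to be the careful derivation of the Poitou--Tate sequence in the inverse limit over the tower $K_n$, together with the precise identification of each dualization with the $U_v$ defined in the paper; once the splicing sequence is in hand, the $\mu$-invariant comparison is essentially formal.
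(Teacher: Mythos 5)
Your proposal is correct and follows essentially the same route as the paper: dualize the Poitou--Tate sequence to exhibit $Y(K_\infty,E[p^\infty])$ as the kernel of $\HIw^2(K_\infty,T_p(E)) \to \bigoplus_{v\in S} U_v$, then use \nameref{van} to kill $U_v$ at infinitely split primes and the remaining local modules by a smallness argument. The only difference is in the finitely decomposed case, where you observe directly that $U_v \simeq \Lambda(G)\otimes_{\Lambda(G_v)} A_v$ is finitely generated over $\Z_p$ (since $A_v$ is and $[G:G_v]<\infty$), hence has trivial $\mu$-invariant; the paper instead routes this through Shapiro's lemma and the Euler-characteristic formula $\chi(G, U_v(p))=p^{\mu_G(U_v)}$, which reaches the same conclusion by a somewhat longer path.
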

	\begin{proof}
		The Poitou-Tate exact sequence gives:
		$$0 \rightarrow H^0(K_\infty, E[p^\infty]) \rightarrow \bigoplus_{v \in S} K_v^{0}(K_\infty, E[p^\infty]) \rightarrow \HIw^2(K_\infty, T_p(E))^\vee \rightarrow Y(K_\infty, E[p^\infty])^\vee \rightarrow 0.$$
		
		The surjection follows immediately from the fact that $Y(K_\infty, E[p^\infty])^\vee = \Sel_{\fine}(K_\infty, E[p^\infty])$ is defined as the kernel of $H^1(K_S/K_\infty) \rightarrow \bigoplus_{v \in S} K^1_v(K_\infty, E[p^\infty]).$
		
		Since $H^0(K_\infty, E[p^\infty])^\vee$ is finitely generated over $\Z_p$, it suffices to show that $\mu_G(U_v) = 0$ for every $v \in S$. For primes $v$ in $K$ that are infinitely split in $K_\infty/K$, we assumed that $E/K$ satisifes hypothesis \nameref{van}, hence $U_v = 0$.
		
		Suppose that $v$ is finitely split in $K_\infty/K$. Because $\Lambda(G)$ is flat over $\Lambda(G_v)$ \cite[821]{CS05}, it can be shown that $U_v(p) = \Lambda(G) \otimes_{\Lambda(G_v)} A_v(p)$. Indeed, the following sequence must be exact:
		$$0 \rightarrow A_v[p^n] \otimes_{\Lambda(G_v)} \Lambda(G) \rightarrow A_v \otimes_{\Lambda(G_v)} \Lambda(G) \rightarrow A_v \otimes_{\Lambda(G_v)} \Lambda(G) \rightarrow 0,$$
		
		hence $U_v[p^n] \simeq A_v[p^n]\ \otimes_{\Lambda(G_v)} \Lambda(G)$.  Taking direct limits then gives $U_v(p) = \Lambda(G) \otimes_{\Lambda(G_v)} A_v(p)$.

		By Shapiro's lemma, we also have$$\chi(G, U_v(p)) = \chi(G_v, A_v(p))$$
		
		where $\chi(G, -), \chi(G_v, -)$ are the Euler characteristics. In the following argument, we exploit the fact that $\chi(G, X(p)) = p^{\mu_G(X)}$ for a finitely generated $\Lambda(G)$-torsion module $X$ (see \cite[page 811]{CS05}). Now $\mu_{G_v}(A_v(p)) = 1$ because $A_v$ is finitely generated and torsion over $\Lambda({G_v})$ and $A_v(p)$ is finite (note that $A_v$ is finitely generated over $\Z_p$). Hence $U_v$ is finitely generated and torsion over $\Lambda(G)$, and $\chi(G, U_v(p)) = \chi(G_v, A_v(p)) = 1$, which implies $\mu_G(U_v) = 0$. 
	\end{proof}
%
	\begin{rmk}
		When \nameref{van} does not hold for some infinitely split prime $v$, we claim that $\mu_G(U_v) > 0$. Indeed, we first note that the decomposition group $G_v$ is trivial. Moreover, $U_v$ and $A_v$ are $p$-primary, so that $A_v(p) = A_v$ and $U_v(p) = U_v$. We have the following:
		\[\chi(G_v, A_v(p)) = \chi(G_v, A_v) = \# A_v =  \# E[p^\infty](K_v).\]
		It follows that $\chi(G, U_v(p)) = \chi(G, U_v) = \chi(G_v, A_v) = \# E[p^\infty](K_v)$ by Shapiro's lemma. If \nameref{van} does not hold for $v$, then $\mu_G(U_v) > 0$ because $\chi(G, U_v(p)) = p^{\mu_G(U_v(p))}$ (see \cite[page 811]{CS05}).
	\end{rmk}

	\begin{lemma} \label{equiv-fine} Each of the following is equivalent to \nameref{weak-Leop}:
		\begin{enumerate}[(i)]
			\item $Y(K_\infty, E[p^\infty])$ is $\Lambda(G)$-torsion.
			\item $\HIw^2(K_\infty, T_p(E))$ is $\Lambda(G)$-torsion.
		\end{enumerate}
	\end{lemma}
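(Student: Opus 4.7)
My plan is to establish the triple equivalence in two steps: first (i) $\Leftrightarrow$ (ii), and then (ii) $\Leftrightarrow$ \nameref{weak-Leop}.

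For (i) $\Leftrightarrow$ (ii), I would dualize the four-term Poitou--Tate exact sequence derived in the proof of Lemma \ref{equal-mu}, obtaining
\[
0 \to Y(K_\infty, E[p^\infty]) \to \HIw^2(K_\infty, T_p(E)) \to \bigoplus_{v \in S} U_v \to H^0(K_\infty, E[p^\infty])^\vee \to 0.
\]
The key observation is that each $A_v = H^0(K_{\infty, w}, E[p^\infty])^\vee$ is a finite group, since the $p^\infty$-torsion of the Mordell--Weil group of $E$ over the local field $K_{\infty, w}$ is finite. Consequently each $U_v = \Hom_{\Lambda(G_v)}(\Lambda(G), A_v)$ is annihilated by a power of $p$ and is therefore $\Lambda(G)$-torsion; the rightmost term $H^0(K_\infty, E[p^\infty])^\vee$, being finitely generated over $\Z_p$, is also $\Lambda(G)$-torsion. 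The cokernel $\HIw^2(K_\infty, T_p(E))/Y(K_\infty, E[p^\infty])$ then embeds into the $\Lambda$-torsion module $\bigoplus_v U_v$, yielding $\rk_\Lambda Y(K_\infty, E[p^\infty]) = \rk_\Lambda \HIw^2(K_\infty, T_p(E))$; the equivalence (i) $\Leftrightarrow$ (ii) follows immediately.

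For (ii) $\Leftrightarrow$ \nameref{weak-Leop}, I would appeal to the standard Iwasawa-theoretic rank identity
\[
\rk_\Lambda \HIw^2(K_\infty, T_p(E)) = \corank_\Lambda H^2(K_S/K_\infty, E[p^\infty]),
\]
which can be extracted from a Jannsen-type spectral sequence together with the fact that $\Ext^i_{\Lambda(G)}(M, \Lambda(G))$ is $\Lambda(G)$-torsion for any finitely generated $\Lambda(G)$-module $M$ and any $i \geq 1$ (since $\Lambda(G) \simeq \Z_p\llbracket T \rrbracket$ is a two-dimensional regular local ring). Combining this rank identity with \cite[Proposition 4]{Gr89}, already invoked in the proof of Theorem \ref{thm:weak-Leop} and asserting that $H^2(K_S/K_\infty, E[p^\infty])$ is $\Lambda$-cofree, we deduce that $\HIw^2(K_\infty, T_p(E))$ is $\Lambda$-torsion if and only if $\corank_\Lambda H^2(K_S/K_\infty, E[p^\infty]) = 0$, which holds if and only if $H^2(K_S/K_\infty, E[p^\infty]) = 0$; this is precisely \nameref{weak-Leop}.

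The main obstacle is setting up the rank identity in the second step cleanly, as it relies either on the Jannsen spectral sequence machinery or on a careful analysis of the Poitou--Tate long exact sequence for Iwasawa cohomology; by contrast, the first step is essentially immediate once the local finiteness of $A_v$ is recorded.
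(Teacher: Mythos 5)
Your overall plan (Poitou--Tate for (i)$\Leftrightarrow$(ii), then a rank identity plus cofreeness for (ii)$\Leftrightarrow$\nameref{weak-Leop}) matches the structure of the paper's proof, which defers the details to \cite[Lemma 3.1]{CS05} after explaining the new feature (infinitely split primes). However, there is a genuine gap in your first step. You assert that each $A_v = H^0(K_{\infty,w}, E[p^\infty])^\vee$ is a \emph{finite} group ``since the $p^\infty$-torsion of the Mordell--Weil group of $E$ over the local field $K_{\infty, w}$ is finite.'' This is true only when $v$ is infinitely split in $K_\infty/K$: in that case $G_v$ is trivial, $K_{\infty, w} = K_v$ is an honest local field, and $E(K_v)[p^\infty]$ is indeed finite. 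When $v$ is finitely decomposed, $G_v \simeq \Z_p$ and $K_{\infty, w}$ is an \emph{infinite} extension of $K_v$; here $E[p^\infty](K_{\infty, w})$ can perfectly well be infinite (it is merely cofinitely generated over $\Z_p$, of corank at most $2$), so $A_v$ need not be killed by any power of $p$ and $U_v$ is not annihilated by a power of $p$ either. The correct argument in this case, which the paper spells out, is that $A_v$ is a finitely generated $\Z_p$-module while $G_v \simeq \Z_p$ has dimension $1$, hence $A_v$ is automatically $\Lambda(G_v)$-torsion; since $[G:G_v] < \infty$ one has $U_v \simeq \Lambda(G) \otimes_{\Lambda(G_v)} A_v$, which is then $\Lambda(G)$-torsion. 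With that repair your conclusion $\rk_\Lambda Y = \rk_\Lambda \HIw^2$ is fine.

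Your second step is correct and is a reasonable substitute for citing \cite[Lemma 3.1]{CS05}: the rank identity $\rk_\Lambda \HIw^2(K_\infty, T_p E) = \corank_\Lambda H^2(K_S/K_\infty, E[p^\infty])$ does follow from the degree-$2$ edge analysis of the Jannsen spectral sequence (the $E_2^{2,0}$ and $E_2^{1,1}$ contributions are $\Lambda$-torsion because $\Ext^i_\Lambda(-,\Lambda)$ is torsion for $i \geq 1$ over the domain $\Lambda$, and $E_2^{3,0}$ vanishes since $\Lambda$ has global dimension $2$), and combining this with the $\Lambda$-cofreeness of $H^2(K_S/K_\infty, E[p^\infty])$ from \cite[Proposition 4]{Gr89} closes the loop. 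One small remark: the torsionness of $\Ext^i_\Lambda(M,\Lambda)$ for $i \geq 1$ is not really a consequence of $\Lambda$ being a $2$-dimensional regular local ring per se; it is the more elementary observation that these groups vanish after tensoring with the fraction field of $\Lambda$, since $\Ext$ over a field is concentrated in degree $0$.
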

	\begin{proof}
		The proof for \cite[Lemma 3.1]{CS05} in the cyclotomic setting also applies more generally in our setting, the only difference being the presence of primes $v \in S$ that are infinitely split in $K_\infty/K$.
		
		Observe that for a bad prime $v$ of $K$, $G_v \simeq \Z_p$ if $v$ is finitely decomposed, and $G_v \simeq 1$ if $v$ is infinitely split. In the first case,  $G_v$ has dimension $1$ as a $p$-adic Lie group and hence $A_v$ is $\Lambda(G_v)$-torsion as a finitely generated $\Z_p$-module. In the case that $G_v \simeq 1$, $v$ is totally and infinitely split in $K_\infty/K$. Let $w$ be a prime in $K_\infty$ above $v$. Then we have $K_{\infty, w} = K_v$ and $A_v := E[p^\infty](K_{\infty, w}) = E[p^\infty](K_v)$, which is finite and therefore $\Lambda(G_v)$-torsion. 
		
		The rest of the proof is analogous to the proof of \cite[Lemma 3.1]{CS05}. 	
	\end{proof}
	
	\begin{lemma}\label{Iw-Gal}
		Assume that $E/K$ satsifies \nameref{weak-Leop}. Then the $\Omega(G)$-module $\HIw^2(K_\infty, E[p])$ is finite if and only if $H^2(K_S/K_\infty, E[p]) = 0$.
	\end{lemma}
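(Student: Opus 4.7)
The plan is to reduce both sides of the equivalence to a statement about the $\mu$-invariant of $\HIw^2(K_\infty, T_p(E))$.

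First I would apply the long exact sequence of Iwasawa cohomology to the short exact sequence of Galois modules $0 \to T_p(E) \xrightarrow{p} T_p(E) \to E[p] \to 0$. Using the bound $\mathrm{cd}_p(G_{K_n, S}) \leq 2$ for odd $p$, one has $\HIw^3(K_\infty, T_p(E)) = 0$, and the resulting sequence yields $\HIw^2(K_\infty, E[p]) \simeq \HIw^2(K_\infty, T_p(E))/p$. Because \nameref{weak-Leop} forces $\HIw^2(K_\infty, T_p(E))$ to be finitely generated and $\Lambda(G)$-torsion by Lemma \ref{equiv-fine}, the structure theorem for finitely generated torsion $\Lambda$-modules implies that $\HIw^2(K_\infty, E[p])$ is finite as an $\Omega(G)$-module if and only if $\mu_G(\HIw^2(K_\infty, T_p(E))) = 0$.

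Next I would handle the Galois cohomology side in parallel. From the short exact sequence $0 \to E[p] \to E[p^\infty] \xrightarrow{p} E[p^\infty] \to 0$ together with \nameref{weak-Leop}, one obtains an isomorphism $H^2(K_S/K_\infty, E[p]) \simeq H^1(K_S/K_\infty, E[p^\infty])/p$. Thus $H^2(K_S/K_\infty, E[p]) = 0$ is equivalent to $H^1(K_S/K_\infty, E[p^\infty])$ being $p$-divisible, equivalently to the Pontryagin dual $X := H^1(K_S/K_\infty, E[p^\infty])^\vee$ being $p$-torsion-free as a $\Lambda$-module.

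To link these two reformulations, I would invoke Jannsen's spectral sequence $E_2^{i,j} = \Ext^i_\Lambda(H^j(K_S/K_\infty, E[p^\infty])^\vee, \Lambda) \Rightarrow \HIw^{i+j}(K_\infty, T_p(E))$. \nameref{weak-Leop} kills the row $j = 2$, and since $E(K_\infty)[p^\infty]$ is finite, the row $j=0$ contributes only finite terms; on the anti-diagonal $i+j = 2$ the surviving filtration produces an exact sequence $0 \to (\text{finite}) \to \HIw^2(K_\infty, T_p(E)) \to \Ext^1_\Lambda(X, \Lambda) \to 0$, from which one extracts $\mu_G(\HIw^2(K_\infty, T_p(E))) = \mu_G(X)$. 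Combined with the standard structural fact that $X$ has no non-trivial finite $\Lambda$-submodule, one deduces that $\mu_G(X) = 0$ if and only if $X[p] = 0$, which is precisely the $p$-divisibility of $H^1(K_S/K_\infty, E[p^\infty])$.

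The hardest step will be justifying that $X$ has no non-trivial finite $\Lambda$-submodule. In the cyclotomic setting of \cite{Suj10} this is essentially automatic from well-known structural results on Galois cohomology modules, but for a general $\Z_p$-extension one must combine \nameref{weak-Leop} with the structure of local Iwasawa cohomology at primes in $S$, which is the main additional ingredient beyond the cyclotomic argument.
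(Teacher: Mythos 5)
Your argument is correct and reaches the same conclusion, but it takes a genuinely different route from the paper's. The paper applies Jannsen's spectral sequence directly to the $\Omega(G)$-module $E[p]$: in the forward direction, vanishing of $H^2(K_S/K_\infty, E[p])$ gives a short exact sequence sandwiching $\HIw^2(K_\infty, E[p])$ between $\Omega(G)$-torsion $\Ext$-groups, and since $\Omega(G)$ is a PID this immediately forces finiteness; in the converse direction, finiteness of $\HIw^2(K_\infty, E[p])$ forces $\Hom_{\Omega(G)}(X_2, \Omega(G))$ to be $\Omega(G)$-torsion, hence zero since it is also torsion-free, so $X_2$ is finite, and then the long exact sequence for $0 \to E[p] \to E[p^\infty] \to E[p^\infty] \to 0$ together with \nameref{weak-Leop} embeds $X_2$ into $H^1(K_S/K_\infty, E[p^\infty])^\vee$, which has no nontrivial finite submodule, killing $X_2$. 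Your approach instead passes both sides through $\mu$-invariants: $\HIw^2(K_\infty, E[p])$ is finite iff $\mu(\HIw^2(K_\infty, T_p(E))) = 0$ (using Lemma \ref{equiv-fine} to know $\HIw^2(K_\infty, T_p(E))$ is $\Lambda$-torsion, plus Nakayama), $H^2(K_S/K_\infty, E[p]) = 0$ iff $X := H^1(K_S/K_\infty, E[p^\infty])^\vee$ is $p$-torsion-free, and Jannsen's spectral sequence over $\Lambda$ for $T_p(E)$ identifies $\mu(\HIw^2(K_\infty, T_p(E))) = \mu(X)$. Both proofs hinge on the same two tools (Jannsen's spectral sequence and a no-finite-submodule statement), but your version leans on Lemma \ref{equiv-fine} as an extra input while the paper's direct $\Omega(G)$-computation is more self-contained; conversely, yours more transparently exhibits the $\mu$-invariant as the common obstruction, which foreshadows the role this lemma plays in the proof of Theorem \ref{main-1}.

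The step you flag as the hardest --- that $X$ has no nontrivial finite $\Lambda$-submodule --- is not a gap but a citable result: Greenberg's \cite[Proposition 5]{Gr89} shows precisely that $H^2(K_S/K_\infty, E[p^\infty]) = 0$ implies $H^1(K_S/K_\infty, E[p^\infty])^\vee$ has no nontrivial finite $\Lambda$-submodule, and this is stated for general $\Z_p$-extensions, not only the cyclotomic one. The paper invokes it at exactly the analogous point in its converse direction. So no additional analysis of local Iwasawa cohomology at primes in $S$ is needed; you can simply cite Greenberg, and with that your proof closes.
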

	\begin{proof}
		By Jannsen's spectral sequence \cite[Theorem 1]{Jan14} and the fact that $T_p(E[p]) = E[p]$, one has $$E_2^{p,q} = {E}^p(H^q(K_S/K_\infty, E[p])^\vee) \Rightarrow  \HIw^{p + q}(K_S/K_\infty, E[p])$$ 
		where $E^i(-) := \Ext_{\Omega(G)}^i(-, \Omega(G))$.
		If $H^2(K_S/K_\infty, E[p]) = 0$ then the following is exact:
		$$0 \rightarrow E^2(X_0) \rightarrow \HIw^2(K_\infty, E[p]) \rightarrow E^1(X_1)$$
		where $X_i = H^i(K_S/K_\infty, E[p])^{\vee}$. 
		
		Both $E^2(X_0)$ and $E^1(X_1)$ are $\Omega(G)$-torsion as $X_0$ and $X_1$ are finitely generated as $\Omega(G)$-modules, and $\Omega(G)$ is a PID. Hence $\HIw^2(K_\infty, E[p])$ is also $\Omega(G)$-torsion, which is equivalent to being finite as an $\Omega(G)$-module. Conversely, if $\HIw^2(K_\infty, E[p])$ is finite then one can extract the following exact sequence from Jannsen's spectral sequence:
		\[\HIw^2(K_\infty, E[p]) \rightarrow E^0(X_2) \rightarrow E^2(X_1)\]
		
		As before, $E^2(X_1)$ is $\Omega(G)$-torsion and so is $E^0(X_2) := \Hom_{\Omega(G)}(X_2, \Omega(G))$. Therefore, $X_2$ is also torsion. The short exact sequence $0 \rightarrow E[p] \rightarrow E[p^\infty] \rightarrow E[p^\infty] \rightarrow 0$ induces the following sequence in cohomology:
		$$H^2(K_S/K_\infty, E[p^\infty])^{\vee} \rightarrow H^2(K_S/K_\infty, E[p])^{\vee} \rightarrow H^1(K_S/K_\infty, E[p^\infty])^\vee \rightarrow H^1(K_S/K_\infty, E[p^\infty])^\vee.$$
		
		By \cite[Proposition 5]{Gr89}, $H^2(K_S/K_\infty, E[p^\infty]) = 0$ implies that $H^1(K_S/K_\infty, E[p^\infty])^{\vee}$ has no nontrivial finite submodule. It is then clear form the exact sequence above that $H^2(F_S/K_\infty, E[p]) = 0$.
	\end{proof}
	
	We are now ready to prove the main theorem of this chapter.
	\begin{proof}[Proof of Theorem \ref{main-1}]
		Consider the Iwasawa cohomologies attached to $T_p(E)$ and $E[p]$:
		$$\HIw^2(K_\infty, T_p(E)) := \varprojlim_{n} H^2(K_S/ K_n, T_p(E)), \HIw^2 (K_\infty, E[p]) := \varprojlim_{n} H^2(K_S/K_n, E[p]).$$
		Then by Lemma \ref{Iw-Gal}, $\HIw^2(K_\infty, E[p])$ is finite if and only if $H^2(K_S/K_\infty, E[p]) = 0$. 
		
		From Lemma \ref{equal-mu} and \ref{equiv-fine}, it follows that both $Y(K_\infty, E[p^\infty])$ and $\HIw^2(K_\infty, T_p(E))$ are $\Lambda(G)$-torsion and that they have the same $\mu$-invariant. Since $\HIw^2(K_\infty, E[p]) \simeq \HIw^2(K_\infty, T_p(E))/p$, it follows that $\HIw^2(K_\infty, E[p])$ is finite if and only if $\HIw^2(K_\infty, T_p(E))$ is finitely generated over $\Z_p$. This completes the proof of the theorem.
	\end{proof}

	\section{Classical Selmer groups over general $\Z_p$-extensions}
	\label{classical}
	In \cite{Hac11, Kid18}, the authors study how the $\mu$ and $\lambda$-invariants vary for general $\Z_p$-extensions over number fields. An important hypothesis in those papers is that the Selmer group over the trivializing extension is cotorsion over the Iwasawa algebra.  As remarked in \cite[303, 304]{Kid18}, when the base field is imaginary quadratic, this hypothesis is known to be true over the cyclotomic extension and the anticyclotomic extension in the definite setting. It is now known to hold for all but finitely many $\Z_p$-extensions \cite[Proposition 2.5, Proposition 2.6]{KMR23}. 

	In this section, let $K$ be an imaginary quadratic field of discriminant $-D < 0$. Let $E/K$ be an elliptic curve and suppose that $p > 2$ is a prime such that $D, N$ and $p$ are pairwise coprime. We will denote by $K_{\ac}$ and $K_{\cyc}$ the anticyclotomic and cyclotomic extensions of $K$, respectively. In the special cases where we consider these extensions, we will denote $G_{\star} = \Gal(K_{\star}/K)$ and denote $\Lambda(G_{\star})$ to be the Iwasawa algebra attached to $G_{\star}$, where $\star \in \{\ac, \cyc\}$. Again, let $S$ be a finite set of primes in $K$ containing the primes of bad reduction of $E/K$, and the primes above $p$ and the primes at infinity.
	
	In order to state the main results, we recall the following hypotheses for our imaginary quadratic field $K$, which were stated in the introduction for general number fields.
	
	\begin{namedhypo}[\nameref{ord}]
		$E/K$ has good ordinary reduction at every prime above $p$.
	\end{namedhypo}
	
	\begin{namedhypo} [\nameref{cotor}] 
		$\Sel(K_\infty, E[p^\infty])$ is $\Lambda(G)$-cotorsion. 
	\end{namedhypo}

	\begin{namedhypo} [\nameref{surj}] 
		$E$ does not admit complex multiplication and the natural map \[\Gal(K(E[p^\infty])/ K) \rightarrow GL_2(\Z_p)\] is surjective.  
	\end{namedhypo}
	
	\begin{rmk} \label{rmk:surj}
		Hypothesis \nameref{surj} implies that $H^0(K_S/K_n, E[p^\infty]) = 0$ for every finite layer $K_n/K$ of $K_\infty/K$. Indeed, if $P \in E[p^\infty]$ is of exact order $p^m$ and $P \in K_n$, then $\{P^\sigma \mid \sigma \in \Gal(K(E[p^m])/K)\} \subset K_n$ and so $K(E[p^m]) \subset K_n$ because the Galois action is transitive. However, $\Gal(K_n/K)$ cannot admit $\Gal(K(E[p^m])/K)$ as a $GL_2(\Z/p^m\Z)$-quotient. This is because $K_n/K$ has degree $p^n$, and $GL_2(\Z/p^m \Z)$ contains the subgroup $(\Z/p^m\Z)^\times$ of order $\phi(p^m) = p^{m - 1}(p - 1)$. 
	\end{rmk}
	

	\begin{namedhypo}[\nameref{p-ram}] 
		Every prime of $K$ above $p$ ramifies in $K_\infty/K$.
	\end{namedhypo}
	
	\begin{namedhypo}[\nameref{van}] 
		$E(K_v)[p] = 0$ for every prime $v \in S$ that is infinitely split in the extension $K_\infty/K$. 
	\end{namedhypo} 
	
	Note that for a fixed elliptic curve $E$, there are only finitely many primes $p$ that do not satisfy this assumption. This is because the condition that $E(K_v)[p] = 0$ is equivalent to $p \nmid 1 + q_v - a_v$. Here, we denote $a_v := 1 + q_v - \# \tilde{E}_v(k_{v})$, $\tilde{E}_v$ is the reduction of $E$ at $v$ and $q_{v}$ is the size of the residue field $k_v$ of $K_v$.

	Let $E_1/\Q$ and $E_2/\Q$ be a pair of $p$-residually isomorphic elliptic curves of conductors $N_1$ and $N_2$, respectively. It follows from a result of Kleine, Matar and Ramdorai \cite[Proposition 2.5]{KMR23} that for all but finitely many $\Z_p$-extensions $K_\infty/K$, the following properties are satisfied:
	
	\begin{enumerate}
		\item No prime dividing $pN_1 N_2 \infty$ splits completely in $K_\infty/K$. 
		\item Every prime of $K$ above $p$ ramifies in $K_\infty/K$.
		\item $\Sel(K_\infty, E_i[p^\infty])$ is $\Lambda(G)$-cotorsion for each $i$.
	\end{enumerate}
	
	Indeed, observe that the hypotheses \nameref{cotor}, \nameref{van}, \nameref{p-ram} are satisfied for the cyclotomic extension $K_\cyc$. Moreover, \cite[Proposition 2.5]{KMR23} only assumes the existence of one extension $K_\infty/K$ that satisfies all three properties. Hence, one may take $K_\infty$ to be the cyclotomic extension in \cite[Proposition 2.5]{KMR23} and conclude that \nameref{cotor}, \nameref{van}, \nameref{p-ram} are satisfied for all but finitely many other $\Z_p$-extensions.
	
	
	\begin{rmk}
		Consider the special case where $K_\infty = K_{\ac}$, the anticyclotomic extension of $K$. One may factorize $N = N^{+} N^{-}$ with $N^+$ divisible by primes that split in $K/\Q$ and $N^{-}$ divisible by inert primes. Now suppose that  $N^{-}$ has an odd number of prime divisors. For all but finitely many primes $p$, it is a theorem that $\Sel(K_\ac, E[p^\infty])$ is cotorsion (see \cite[Introduction]{BD05}). Pollack and Weston later made improvements for Galois representations associated with modular forms \cite{PW11}. For a fixed elliptic curve, there are only finitely many primes $p$ that do not satisfy all of the hypotheses \nameref{ord}, \nameref{cotor}, \nameref{surj}, \nameref{van} and \nameref{p-ram}. In this setting, the results in this section are special cases of \cite[Theorem 7.1]{PW11} if one only restricts to elliptic curves. However, the arguments we use are based on \cite{CS23}, which examines an analogous question for $\pm$-Selmer groups in the cyclotomic supersingular case using fine residual Selmer groups.	
	\end{rmk}

	
	We are now ready to state the first main result of this section.
	\begin{thm} \label{mu}
		Assume that $E_1/K$ and $E_2/K$ are elliptic curves such that $E_1[p] \simeq E_2[p]$ as $\Gal(\overline{K}/K)$-modules and that they both satisfy hypotheses \nameref{ord}, \nameref{cotor}, \nameref{surj}, \nameref{van} and \nameref{p-ram}. Then $\mu(\Sel(K_\infty, E_1[p^\infty])) = 0$ if and only if $\mu(\Sel(K_\infty, E_2[p^\infty])) = 0$. 
	\end{thm}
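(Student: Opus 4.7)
The plan is to follow the strategy of \cite{CS23} and \cite{Ham23} by introducing a residual Selmer group attached to $E_i[p]$ and reducing the condition $\mu(\Sel(K_\infty, E_i[p^\infty])) = 0$ to a cohomological property that depends only on the residual Galois module $E_i[p]$ (together with its intrinsic local filtration data), and hence is automatically preserved by the $G_K$-isomorphism $E_1[p] \simeq E_2[p]$. Hypothesis \nameref{cotor} gives that $\Sel(K_\infty, E_i[p^\infty])$ is $\Lambda(G)$-cotorsion, so $\mu = 0$ is equivalent to $\Sel(K_\infty, E_i[p^\infty])[p]$ being finite, equivalently to $\Sel(K_\infty, E_i[p^\infty])^\vee$ being finitely generated over $\Z_p$.

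First, I would introduce the residual Selmer group $\Sel(K_\infty, E_i[p])$ defined via the analogous Greenberg local conditions: at $v \mid p$ the image of $H^1(K_{\infty,w}, C_{v,i}[p]) \to H^1(K_{\infty,w}, E_i[p])$ for the kernel-of-reduction subgroup $C_{v,i}[p]$ coming from \nameref{ord}, and the full local $H^1$ at primes $v \nmid p$. Using the exact sequence $0 \to E_i[p] \to E_i[p^\infty] \xrightarrow{p} E_i[p^\infty] \to 0$ together with $H^0(K_S/K_\infty, E_i[p^\infty]) = 0$ (which follows from \nameref{surj} by taking direct limits in Remark \ref{rmk:surj}), I would identify $H^1(K_S/K_\infty, E_i[p])$ with $H^1(K_S/K_\infty, E_i[p^\infty])[p]$. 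A diagram chase then places $\Sel(K_\infty, E_i[p])$ and $\Sel(K_\infty, E_i[p^\infty])[p]$ in a four-term exact sequence whose error terms are products of local kernels and cokernels at each $v \in S$; these are finite by \nameref{van} at infinitely-split primes $v \nmid p$, by standard arguments at finitely-split primes, and by the \nameref{ord}--\nameref{p-ram} description of the local Kummer image at $v \mid p$. Consequently, $\Sel(K_\infty, E_i[p^\infty])[p]$ is finite if and only if $\Sel(K_\infty, E_i[p])$ is.

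Next, I would match the residual Selmer groups for $E_1$ and $E_2$. The isomorphism $E_1[p] \simeq E_2[p]$ as $G_K$-modules induces an isomorphism on $H^1(K_S/K_\infty, -)$, so only the local conditions remain to be compared; away from $p$ they coincide by construction. At $v \mid p$, the subgroup $C_{v,i}[p] \subset E_i[p]$ is characterized as the inertia-ramified part of $E_i[p]$ at $v$ (where inertia acts by the mod-$p$ cyclotomic character), so it is intrinsic to the $G_{K_v}$-module structure of $E_i[p]$, and the mod-$p$ isomorphism carries $C_{v,1}[p]$ to $C_{v,2}[p]$. Combining this with the preceding step yields the equivalence $\mu(\Sel(K_\infty, E_1[p^\infty])) = 0 \Leftrightarrow \mu(\Sel(K_\infty, E_2[p^\infty])) = 0$.

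The main obstacle will be verifying the intrinsic characterization of $C_{v,i}[p]$ used in the last step, which requires ruling out the degenerate case where inertia at $v \mid p$ acts trivially on $E_i[p]$; here \nameref{surj} combined with \nameref{p-ram}, together with the fact that $p$ is unramified in $K/\Q$ (so that the mod-$p$ cyclotomic character restricts nontrivially to inertia at every $v \mid p$), eliminates the pathology. The comparison step is largely standard once one has the framework of Section \ref{fineSelmer} and the analogous calculations in \cite[Section 2]{GV00} extended to general $\Z_p$-extensions, but careful attention is needed at infinitely-split primes, which is precisely the role of hypothesis \nameref{van}.
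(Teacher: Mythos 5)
Your proposal is essentially correct and follows the same general strategy as the paper -- introducing a fine residual Selmer group, comparing it with $\Sel(K_\infty, E[p^\infty])[p]$ via a control diagram whose error terms are finite under \nameref{van}, \nameref{p-ram}, \nameref{surj}, and then observing the residual Selmer group is determined by the mod-$p$ Galois module. However, you take a genuinely more direct route in two places. First, at $v\mid p$ you justify the compatibility of the local conditions by appealing to an intrinsic characterization of the kernel-of-reduction subgroup $C_{v,i}[p]$ as the unique $G_{K_v}$-stable line on which inertia acts by the mod-$p$ cyclotomic character; the paper instead invokes the commutative diagram of \cite[Proposition 3.9]{CS23} relating the residual isomorphism to the reduction maps. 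Both arguments are valid, but yours requires only that $E_i$ has good ordinary reduction at $v$ and that $\chi$ is nontrivial on inertia at $v \mid p$ (automatic since $p \nmid D$); you do not actually need \nameref{surj} or \nameref{p-ram} for this step, so your concern about a ``degenerate case'' is a bit over-cautious. Second, your argument reduces directly to the statement ``$R(K_\infty, E_1[p]) \simeq R(K_\infty, E_2[p])$ as $\Omega(G)$-modules'' and then uses the criterion ``$\mu(X) = 0 \iff R$ is $\Omega(G)$-cotorsion'' (which is the equivalence (ii)$\iff$(iii) in the paper's Theorem \ref{equiv}); the paper instead routes through the equivalence (i)$\iff$(iii), breaking the comparison into two pieces: $\mu$-vanishing of the \emph{fine} Selmer group (handled by Theorem \ref{main-1}, which reduces it to $H^2(K_S/K_\infty, E_i[p]) = 0$, visibly depending only on $E_i[p]$) and surjectivity of the defining map $\xi_{p,i}$. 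Your route avoids the fine Selmer group machinery entirely and is arguably cleaner, at the cost of losing the connection with Conjecture A that the paper's presentation emphasizes. One small caution: when setting up the control diagram, you should use a single set $S$ containing the bad primes of \emph{both} curves, as the paper does implicitly, so that the ambient global $H^1$'s agree.
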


	
	We begin with the following definitions:
	
	\begin{defn} 
		We define the fine residual Selmer group $R(K_n, E[p])$ to be the kernel of
		$$H^1(K_S/K_n, E[p]) \rightarrow \bigoplus_{v \in S, v \nmid p} H^1(K_{n, v}, E[p]) \oplus \bigoplus_{v \mid p} H^1(K_{n, v}, \tilde{E}_v[p]),$$
		where $\tilde{E}_v$ is the reduction modulo $v$ of $E/K_v$.
	\end{defn}
	
	Moreover, let  $R(K_{\infty}, E[p]) = \varinjlim_{n} R(K_n, E[p])$. For a prime $v$ of $K_n$, put $$W_v = \begin{cases}
		0 & \text{ if } v \mid S \text{ and } v \nmid p, \\
		\ker(H^1(K_{n, v}, E[p]) \rightarrow H^1(K_{n, v}, \tilde{E}_v[p])) & \text{ if } v \mid p.
	\end{cases}$$
	Then the fine residual Selmer group $R(K_n, E[p])$ is the kernel of
	\[H^1(K_S/K_n, E[p]) \rightarrow \bigoplus_{v \mid S} H^1(K_{n, v}, E[p])/ W_v.\]
	
	Let $W_v^\perp$ be the orthogonal complement of $W_v$ under the local Tate pairing
	$$H^1(K_{n, v}, E[p]) \times H^1(K_{n, v}, E[p]) \rightarrow \Q_p/\Z_p.$$
	
	\begin{defn}
		Define $\frakR(K_n, E[p])$ as the kernel of
		$$H^1(K_S/K_n, E[p]) \rightarrow \bigoplus_{v \mid S} H^1(K_{n, v}, E[p])/ W_v^{\perp}.$$
	\end{defn}
	
	\begin{defn}
		We define
		$\calR(K_\infty, E[p]) = \varprojlim_{n} \frakR(K_n, E[p])$ with respect to the corestriction maps $\frakR(K_{n + 1}, E[p]) \rightarrow \frakR(K_n, E[p]).$
	\end{defn}
	
	Observe that $\calR(K_\infty, E[p])$ is a submodule of the Iwasawa cohomology \[\HIw^1(K_\infty, E[p]) := \varprojlim_{n} H^1(K_n, E[p])\] in Definition \ref{Iw-coh}. The proof of the following lemma is completely analogous to \cite[Lemma 2.7]{CS23}:
	\begin{lemma} \label{free}
		Assume hypothesis \nameref{surj}. Then $\calR(K_\infty, E[p])$ is free over $\Omega(G)$.
	\end{lemma}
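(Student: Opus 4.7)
The plan is to exhibit $\calR(K_\infty, E[p])$ as a submodule of the Iwasawa cohomology $\HIw^1(K_\infty, E[p])$ and then show the latter is free over $\Omega(G) \simeq \F_p\llbracket T \rrbracket$. Since each $\frakR(K_n, E[p])$ is a subgroup of $H^1(K_S/K_n, E[p])$ and inverse limits preserve injectivity, there is a natural inclusion
\[\calR(K_\infty, E[p]) \hookrightarrow \HIw^1(K_\infty, E[p]).\]
Because $\Omega(G)$ is a principal ideal domain, any submodule of a free $\Omega(G)$-module is itself free, so it suffices to establish the freeness of the Iwasawa cohomology.

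To that end, I would invoke Jannsen's spectral sequence with coefficients in $E[p]$, exactly as already used in Lemma \ref{Iw-Gal}:
\[E_2^{p,q} = \Ext^p_{\Omega(G)}\bigl(H^q(K_S/K_\infty, E[p])^{\vee}, \Omega(G)\bigr) \;\Longrightarrow\; \HIw^{p+q}(K_\infty, E[p]).\]
Hypothesis \nameref{surj} forces $H^0(K_S/K_\infty, E[p]) = 0$: any $p$-torsion point of $E$ fixed by $\Gal(K_S/K_\infty)$ is defined over a finite subextension of $K_\infty/K$, and hence lies in some finite layer $K_n$, contradicting the surjectivity of $\Gal(K(E[p])/K) \to GL_2(\F_p)$ as in Remark \ref{rmk:surj}. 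Thus $X_0 := H^0(K_S/K_\infty, E[p])^{\vee} = 0$, the entire bottom row $E_2^{p,0}$ vanishes, and the spectral sequence collapses to give
\[\HIw^1(K_\infty, E[p]) \;\simeq\; \Hom_{\Omega(G)}(X_1, \Omega(G)),\]
where $X_1 := H^1(K_S/K_\infty, E[p])^{\vee}$.

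A topological Nakayama argument, combined with the inflation-restriction sequence and the finiteness of $H^1(K_S/K, E[p])$, shows that $X_1$ is finitely generated over $\Omega(G)$: its coinvariants $X_1 / T X_1$ are dual to $H^1(K_S/K_\infty, E[p])^G \simeq H^1(K_S/K, E[p])$ (using $H^0(K_S/K_\infty, E[p]) = 0$), which is finite. Consequently $\Hom_{\Omega(G)}(X_1, \Omega(G))$ is a finitely generated reflexive module over the DVR $\Omega(G)$, and such a module is necessarily free. This yields the freeness of $\HIw^1(K_\infty, E[p])$, and therefore of $\calR(K_\infty, E[p])$.

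The main obstacle is upgrading Remark \ref{rmk:surj} from finite layers $K_n$ to the full tower $K_\infty$ so as to conclude $H^0(K_S/K_\infty, E[p]) = 0$; this hinges on noting that a $p$-torsion point defined over $K_\infty$ is defined over some $K_n$ since $E[p]$ is finite. Once that vanishing is in hand, the rest is a routine application of the spectral sequence formalism already developed in the cyclotomic case of \cite{CS23}.
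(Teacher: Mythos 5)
Your proposal is correct and follows essentially the same route as the paper: both use Jannsen's spectral sequence, kill the bottom row via \nameref{surj}, identify $\HIw^1(K_S/K_\infty, E[p]) \simeq \Hom_{\Omega(G)}(H^1(K_S/K_\infty, E[p])^\vee, \Omega(G))$, deduce freeness over the PID $\Omega(G)$, and then transfer freeness to $\calR(K_\infty, E[p])$ by embedding it in $\HIw^1$ via the inverse limit of the inclusions $\frakR(K_n, E[p]) \hookrightarrow H^1(K_S/K_n, E[p])$. Your write-up is in fact slightly more explicit than the paper's on two minor points the paper leaves tacit: that $H^0(K_S/K_\infty, E[p]) = 0$ genuinely follows from the finite-layer vanishing in Remark \ref{rmk:surj} by finiteness of $E[p]$, and that the $\Hom$ module is finitely generated (via Nakayama and inflation-restriction), which is what justifies concluding freeness.
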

	
	\begin{proof}
		We recall the Jannsen's spectral sequence \cite{Jan14}:
		$$E_2^{p,q} = {E}^p(H^q(K_S/K_\infty, E[p])^\vee) \Rightarrow  \HIw^{p + q}(K_S/K_\infty, E[p]),$$ 
		where $E^i(-) := \Ext_{\Omega(G)}^i(-, \Omega(G)).$
		Hypothesis \nameref{surj} gives $E_2^{p, 0} = 0$ for all $p \geq 0$. For $j \geq 2$, $E_{j + 1}^{1, 0}$ is the cohomology at $E_j^{1, 0}$ of the sequence $0 \rightarrow E_j^{1, 0} \rightarrow 0$, hence we have $E_\infty^{1, 0} = E_2^{1, 0} = 0$. Since $E_2^{2, 0} = 0$, and for $j \geq 2$, $E_{j + 1}^{0, 1}$ is the cohomology taken at $E_j^{0, 1}$ of the sequence $0 \rightarrow E_2^{0, 1} \rightarrow E_2^{2, 0} \rightarrow 0$, we also have $E_\infty^{ 0, 1} = E_2^{0, 1}$. 
		
		Because $E_\infty^{1, 0}$ embeds in $\HIw^1(K_S/K_\infty, E[p])$ with quotient $E^{0, 1}_\infty$, one has an isomorphism $$\Hom(H^1(K_S/K_\infty, E[p]^\vee), \Omega(G)) = \Ext^{0}_{\Omega(G)}(H^1(K_S/K_\infty, E[p]^\vee)) \simeq \HIw^1(K_S/K_\infty, E[p]).$$ 
		Hence, $\HIw^1(K_S/K_\infty, E[p])$ is free over $\Omega(G)$.  
		By taking inverse limits of the inclusions $\frakR(K_n, E[p]) \hookrightarrow H^1(K_S/K_n, E[p]),$ we have an injection $\calR(K_\infty, E[p]) \hookrightarrow \HIw^1(K_S/K_\infty, E[p])$. Because $\Omega(G)$ is a principal ideal domain and $\HIw^1(K_S/K_\infty, E[p])$ is free over $\Omega(G)$, $\calR(K_\infty, E[p])$ is also free over $\Omega(G)$. 
	\end{proof}	
	
	\begin{defn}
		We define 
		$$\tilde{K}_v^{1}(K_\infty, E[p]) = \bigoplus_{w \mid v} H^1(K_{\infty, w}, \tilde{E}_v[p])$$ 
		if $v \mid p$ and
		$$\tilde{K}_v^{1}(K_\infty, E[p]) = \bigoplus_{w \mid v} H^1(K_{\infty, w}, E[p])$$
		if $v \in S$ and $v \nmid p$. Also, let $\xi_p$ be the following map in the defining sequence of $R(K_\infty, E[p])$:
		$$0\rightarrow R(K_\infty, E[p]) \rightarrow H^1(F_S/K_\infty, E[p]) \xrightarrow{\xi_p} \bigoplus_{v \in S} \tilde{K}_v^{1}(K_\infty, E[p]).$$
	\end{defn}
	
	We state the following analogue of \cite[Lemma 3.7(i)]{CS23} for our setting:
	\begin{thm} \label{ker}
		\begin{enumerate}[(i)] 
			
			\item Assuming hypothesis \nameref{surj}, the map $$H^1(K_S/K_\infty, E[p]) \rightarrow H^1(K_S/K_\infty, E[p^\infty])[p]$$ induced by $E[p] \hookrightarrow E[p^\infty]$ is an isomorphism.
			\item For every prime $w \mid S$ where $w \nmid p$, the map
			$$\psi_{w}: H^1(K_{\infty, w}, E[p]) \twoheadrightarrow H^1(K_{\infty, w}, E[p^\infty])[p]$$ induced by $E[p] \hookrightarrow E[p^\infty]$
			is surjective with finite kernel, of dimension $\dim_{\F_p}(\ker \psi_{w}) = \dim_{\F_p} E(K_{\infty, w})[p] \leq 2$.
			
			\item Assume hypothesis \nameref{p-ram}. For every prime $w \mid v$, where $v \mid p$ is a prime in $K$, the morphism
			\[\widetilde{\psi_w}: H^1(K_{\infty, w}, \tilde{E}_v[p]) \twoheadrightarrow H^1(K_{\infty, w}, E)[p] = (H^1(K_{\infty, w}, E[p^\infty])/ \Ima(\kappa_{\infty, w}))[p]\] 
			is surjective with finite kernel, of dimension $\dim_{\F_p}(\ker{\widetilde{\psi_w}}) = \dim_{\F_p} \tilde{E}_v(k_{\infty, w})[p] \leq 1$ where $k_{\infty, w}$ is the residue field of $K_{\infty, w}$, and $\kappa_{\infty, w}: E(K_{\infty, w}) \otimes \Q_p/\Z_p \rightarrow H^1(K_{\infty, w}, E[p^\infty])$ is the Kummer map. 
		\end{enumerate}
	\end{thm}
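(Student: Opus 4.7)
The three parts share a common mechanism: apply the short exact sequence $0 \to E[p] \to E[p^\infty] \xrightarrow{p} E[p^\infty] \to 0$ (and its analogue for $\tilde{E}_v[p^\infty]$) and take (global or local) Galois cohomology. In each case the map appearing in the statement becomes the induced map on $p$-torsion coming from the long exact sequence, so surjectivity is automatic, and the kernel is identified with an $H^0$ modulo $p$.

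For (i), take $\Gal(K_S/K_\infty)$-cohomology. The resulting kernel is $E[p^\infty](K_\infty)/p$. Under hypothesis \nameref{surj}, Remark \ref{rmk:surj} gives $E[p^\infty](K_n) = 0$ for every $n$, hence $E[p^\infty](K_\infty) = \varinjlim_n E[p^\infty](K_n) = 0$, and the map is an isomorphism.

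For (ii), localize at $w \mid v$ with $v \nmid p$; the same sequence yields
\[
0 \to E[p^\infty](K_{\infty,w})/p \to H^1(K_{\infty,w}, E[p]) \xrightarrow{\psi_w} H^1(K_{\infty,w}, E[p^\infty])[p] \to 0.
\]
Since $E[p^\infty](K_{\infty,w})$ is a $\Z_p$-cofinitely generated subgroup of $E[p^\infty] \simeq (\Q_p/\Z_p)^2$, the kernel $\ker \psi_w$ has $\F_p$-dimension at most $2$. The identification with $\dim_{\F_p} E(K_{\infty,w})[p]$ then reduces to showing that $E[p^\infty](K_{\infty,w})$ has no nontrivial $p$-divisible part; this uses that the formal group of $E$ at $v\nmid p$ is pro-$\ell$ with $\ell \neq p$, so that $E[p^\infty](K_{\infty,w})$ injects into $\tilde{E}_v(k_{\infty,w})[p^\infty]$, together with a Weil-numbers argument on the action of Frobenius on the reduction.

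For (iii), the same strategy applied to $\tilde{E}_v[p^\infty]$ produces a surjection $H^1(K_{\infty,w}, \tilde{E}_v[p]) \twoheadrightarrow H^1(K_{\infty,w}, \tilde{E}_v[p^\infty])[p]$ whose kernel is $\tilde{E}_v(k_{\infty,w})[p^\infty]/p$; ordinariness forces $\tilde{E}_v[p^\infty] \simeq \Q_p/\Z_p$ as an abstract group, bounding the dimension by $1$. The remaining — and principal — step is to identify
\[
H^1(K_{\infty,w}, \tilde{E}_v[p^\infty]) \simeq H^1(K_{\infty,w}, E)[p^\infty] = H^1(K_{\infty,w}, E[p^\infty])/\Ima(\kappa_{\infty,w}).
\]
This is the main obstacle: one invokes Greenberg's local description of the ordinary Selmer condition, which pins $\Ima(\kappa_{\infty,w})$ down as the image of $H^1$ of the formal-group piece $\ker\bigl(E[p^\infty] \twoheadrightarrow \tilde{E}_v[p^\infty]\bigr)$, and uses hypothesis \nameref{p-ram} to kill the higher-cohomology obstructions coming from this filtration. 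Passing to $p$-torsion and composing with the boundary map above then produces $\widetilde{\psi_w}$ with the claimed properties.
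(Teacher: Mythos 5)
Your outline mirrors the paper's mechanism in every part: the long exact sequence from $0 \to E[p] \to E[p^\infty] \xrightarrow{p} E[p^\infty] \to 0$ for (i)--(ii), and the formal-group filtration $0 \to \widehat{E}_v[p^\infty] \to E[p^\infty] \to \tilde{E}_v[p^\infty] \to 0$ for (iii). Part (i) is the same argument as in the paper, and the reference to Remark \ref{rmk:surj} is exactly what is needed.

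For (ii), you correctly flag a point the paper passes over: from the long exact sequence one gets $\ker\psi_w \simeq E[p^\infty](K_{\infty,w})/p$, and to equate its $\F_p$-dimension with $\dim_{\F_p}E(K_{\infty,w})[p]$ one must know that $E[p^\infty](K_{\infty,w})$ has no $p$-divisible part. (The paper instead identifies $H^0/pH^0 \simeq H^0[p]$ at each finite layer $K_n$ --- legitimate there because $E[p^\infty](K_{n,v_n})$ is finite --- and then takes a direct limit; but those isomorphisms are not natural in $n$, so the limit of the kernels is honestly $E[p^\infty](K_{\infty,w})/p$ rather than $E(K_{\infty,w})[p]$.) However, the justification you offer does not close this gap. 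A Weil-numbers argument controls archimedean absolute values of the Frobenius eigenvalues $\alpha,\beta$ and hence rules out their being roots of unity; what is actually at stake is whether one of them is a $p$-adic one-unit. Since $(\alpha-1)(\beta-1)=\#\tilde{E}_v(k_v)$, as soon as $p \mid \#\tilde{E}_v(k_v)$ some eigenvalue is a one-unit, and then $\tilde{E}_v(k_{\infty,w})[p^\infty]$ acquires positive $\Z_p$-corank along the (unramified) $\Z_p$-extension $k_{\infty,w}/k_v$. Hypothesis \nameref{van} forbids this only when $v$ is infinitely split, not when $v$ is finitely decomposed. Your argument also implicitly assumes $v$ is a prime of good reduction, whereas (ii) is stated for all $w\mid S$, $w\nmid p$.

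For (iii), your sketch is structurally correct, but the phrase ``kill the higher-cohomology obstructions'' should be replaced by the concrete input the paper uses: hypothesis \nameref{p-ram} makes $K_{\infty,w}/K_v$ a $\Z_p$-extension, so $\Gal(\overline{K_v}/K_{\infty,w})$ has $p$-cohomological dimension $1$ and hence $H^2(K_{\infty,w},\widehat{E}_v[p^\infty])=0$. The remaining identification of $\Ima(\kappa_{\infty,w})$ with the image of $H^1$ of the formal-group piece is indeed Greenberg's ordinary local description, as you indicate, and the bound $\dim_{\F_p}\tilde{E}_v(k_{\infty,w})[p]\leq 1$ then comes from ordinariness exactly as you say.
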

	
	\begin{proof}
		\begin{enumerate}[(i)]
			\item The short exact sequence $0 \rightarrow E[p] \rightarrow E[p^\infty] \xrightarrow{\times p} E[p^\infty] \rightarrow 0$ induces the following exact sequence:
			$$0 \rightarrow H^0(K_S/K_n, E[p^\infty])/p H^0(K_S/K_n, E[p^\infty])  \rightarrow H^1(K_S/K_n, E[p]) \rightarrow H^1(K_S/K_n, E[p^\infty])[p] \rightarrow 0$$ 
			Note that $H^0(K_S/K_n, E[p^\infty]) = 0$, based on hypothesis \nameref{surj}.
			Now, by taking direct limits of isomorphisms
			$$H^1(K_S/K_n, E[p]) \xrightarrow{\simeq} H^1(K_S/K_n, E[p^\infty])[p]$$
			one obtains 
			$$H^1(K_S/K_\infty, E[p]) \xrightarrow{\simeq} H^1(K_S/K_\infty, E[p^\infty])[p]$$
			\item The proof follows from the proof of \cite[Proposition 4.1(b)]{CS23}. Indeed, one also has a local short exact sequence:
			$$0 \rightarrow H^0(K_{n, v_n}, E[p^\infty])/p H^0(K_{n, v_n}, E[p^\infty])  \rightarrow H^1(K_{n, v_n}, E[p]) \xrightarrow{\psi_n} H^1(K_{n, v_n}, E[p^\infty])[p] \rightarrow 0$$
			where $H^0(K_{n, v_n}, E[p^\infty])/p H^0(K_{n, v_n}, E[p^\infty]) \simeq E(K_{n, v_n})[p]$. Since taking kernels commutes with taking direct limits, we have $\ker(\psi_w) = \varinjlim_{n} \ker(\psi_{v_n}) = E(K_{\infty, w})[p]$, and the result immediately follows.
			
			\item The proof follows from the proof of \cite[Proposition 4.1(c)]{CS23}. Note that Hypothesis \nameref{p-ram} implies that our $\Z_p$-extension is locally deeply ramified at every prime $v \mid p$ in $K$, in the sense of \cite{CG96}.
			
			
			Consider the exact sequence
			$$0 \rightarrow \widehat{E}_v[p^\infty] \rightarrow E[p^\infty] \rightarrow \tilde{E}_v[p^\infty] \rightarrow 0$$
			where $\widehat{E}_v$ is the formal group of $E$ over the ring of integers $\calO_{K_{v}}$ of $K_{v}$. There is an induced exact sequence in cohomology:
			
			\begin{align*}
				0 \rightarrow H^1(K_{\infty, w}, E[p^\infty]) / \Ima(H^1(K_{\infty, w}, E[p^\infty])) \rightarrow H^1(K_{\infty, w}, \tilde{E}_v[p^\infty]) \\ \rightarrow H^2(K_{\infty, w}, \widehat{E}_v[p^\infty]).
			\end{align*}
			
			By the proof of \cite[Proposition 9, Chapter 2, \S 3.3]{Ser97}, $\Gal(\overline{K_v}/K_{\infty, w})$ has $p$-cohomological dimension $1$ since $K_{\infty, w}/K_v$ is a $\Z_p$-extension by Hypothesis \nameref{p-ram}. Therefore, we have $H^2(K_{\infty, w}, \widehat{E}_v[p^\infty]) = \varinjlim_{n} H^2(K_{\infty, w}, \widehat{E}_v[p^n]) = 0$.
			
			From the short exact sequence above, we have an isomorphism 
			\begin{align*}
				H^1(K_{\infty, w}, E)[p^\infty] \simeq H^1(K_{\infty, w}, E[p^\infty])/\Ima(\kappa_{\infty, w}) \simeq H^1(K_{\infty, w}, E[p^\infty])/\Ima(H^1(K_{\infty, w}, \widehat{E}_v[p^\infty])) \\ \simeq H^1(K_{\infty, w}, \tilde{E}_v[p^\infty]),
			\end{align*}
			which follows from the identifications $H^1(K_{\infty, w}, E)[p^\infty] \simeq H^1(K_{\infty, w}, E[p^\infty])/\Ima(\kappa_{\infty, w})$ \cite[p.62]{Gre99} and $\Ima(\kappa_{\infty, w}) \simeq \Ima(H^1(K_{\infty}, \widehat{E}_v[p^\infty]))$ \cite[Proposition 2.4]{Gre99}.
			
			In particular, $H^1(K_{\infty, w}, \tilde{E}_v [p^\infty])[p] \simeq H^1(K_{\infty, w}, E)[p]$ and the surjective map $H^1(K_{\infty, w}, \tilde{E}_v[p]) \rightarrow H^1(K_{\infty, w}, \tilde{E}_v[p^\infty])[p]$ takes values in $H^1(K_{\infty, w}, E)[p]$, with kernel $\tilde{E}_v[p](k_{\infty, w})$ of $\F_p$-dimension at most $1$.
		\end{enumerate}
	\end{proof}

	\begin{thm} \label{mod-p}
		Assume hypotheses \nameref{surj}, \nameref{p-ram} and \nameref{van}. We have an injection
		$$\phi: R(K_\infty, E[p]) \hookrightarrow \Sel(K_\infty, E)[p]$$
		whose cokernel is finite. In particular,
		$$\corank_{\Omega(G)} R(K_\infty, E[p]) = \corank_{\Omega(G)} \Sel(K_\infty, E[p^\infty])[p].$$
	\end{thm}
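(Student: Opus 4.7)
The plan is to compare the defining sequences of $R(K_\infty, E[p])$ and $\Sel(K_\infty, E[p^\infty])[p]$ through a commutative diagram and extract the map $\phi$ via a snake-lemma argument. More precisely, the inclusion $E[p] \hookrightarrow E[p^\infty]$ induces a morphism of short exact sequences
\[
\begin{tikzcd}
0 \ar[r] & R(K_\infty, E[p]) \ar[r] \ar[d, "\phi"] & H^1(K_S/K_\infty, E[p]) \ar[r, "\xi_p"] \ar[d, "\alpha"] & \bigoplus_{v \in S}\tilde{K}^1_v(K_\infty, E[p]) \ar[d, "\beta"] \\
0 \ar[r] & \Sel(K_\infty, E[p^\infty])[p] \ar[r] & H^1(K_S/K_\infty, E[p^\infty])[p] \ar[r] & \bigoplus_{v \in S} J^1_v(K_\infty, E[p^\infty])[p],
\end{tikzcd}
\]
where the bottom right term is the direct sum of the local terms in the definition of the classical Selmer group, restricted to $p$-torsion. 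The map $\phi$ will be defined as the restriction of $\alpha$ to $R(K_\infty, E[p])$ once we verify its image lies in $\Sel(K_\infty, E[p^\infty])[p]$, which is immediate by commutativity.

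Next I would invoke Theorem \ref{ker} to analyze the vertical maps. Part (i), using hypothesis \nameref{surj}, shows that $\alpha$ is an isomorphism. Parts (ii) and (iii) show that $\beta$ is surjective on each local factor, with kernels given by $E(K_{\infty, w})[p]$ (for $w \nmid p$) and $\tilde{E}_v(k_{\infty, w})[p]$ (for $w \mid p$). Applying the snake lemma to the diagram above then immediately yields the injectivity of $\phi$ (since $\alpha$ is an isomorphism, hence its kernel is trivial), together with an embedding $\coker(\phi) \hookrightarrow \ker(\beta)$.

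The main technical step is to show that $\ker(\beta)$ is finite. For a prime $v \in S$ with $v \nmid p$, one splits into two cases: if $v$ is finitely split in $K_\infty/K$ then there are only finitely many primes $w \mid v$, each contributing the finite group $E(K_{\infty, w})[p]$ of $\F_p$-dimension at most $2$; if $v$ is infinitely split, then $K_{\infty, w} = K_v$, and hypothesis \nameref{van} forces $E(K_{\infty, w})[p] = 0$, so these places contribute nothing. For $v \mid p$, hypothesis \nameref{p-ram} guarantees that $v$ ramifies in the $\Z_p$-extension $K_\infty/K$, and since the decomposition group $G_v \subseteq \Z_p$ is a closed subgroup containing the nontrivial inertia group, we must have $[G : G_v] < \infty$, so $v$ is finitely split; each of the finitely many $w \mid v$ then contributes $\tilde{E}_v(k_{\infty, w})[p]$, of $\F_p$-dimension at most $1$. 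In total $\ker(\beta)$ is finite.

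Combining the above, $\phi$ is an injection with finite cokernel. The corank equality follows at once, since an injection of $\Omega(G)$-modules with finite cokernel preserves $\Omega(G)$-corank (finite $\Omega(G)$-modules being torsion). This completes the proof.
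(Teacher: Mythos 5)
Your proof follows essentially the same approach as the paper's: the same comparison diagram, the same invocation of Theorem \ref{ker} for the vertical maps, and the same snake-lemma argument embedding $\coker(\phi)$ into the finite sum of local kernels, with \nameref{van} killing the contribution from infinitely split primes. The only minor difference is that you spell out explicitly why \nameref{p-ram} forces primes above $p$ to be finitely split, a point the paper leaves implicit; this is a correct and useful clarification but does not change the argument.
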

	\begin{proof}
		Consider the following diagram
		$$\begin{tikzcd}
			0 \arrow[r] & R(K_\infty, E[p]) \arrow[r] \arrow[d, "\phi"] & H^1(K_S/K_\infty, E[p]) \arrow[r, "\xi_p"] \arrow[d, "\simeq"] & \bigoplus_{v \in S} \tilde{K}_v(K_\infty, E[p]) \arrow[d, "\oplus_{v \in S} \phi_v"] \\
			0 \arrow[r] &  \Sel(K_\infty, E[p^\infty])[p] \arrow[r] &  H^1(K_S/K_\infty, E[p^\infty])[p] \arrow[r] & \bigoplus_{v \in S} J_v(K_\infty, E[p^\infty])[p].
		\end{tikzcd}$$
		where each prime $v$ in the direct sum is a prime in $K$. The middle vertical arrow is an isomorphism, because of Theorem \ref{ker}(i) which follows from \nameref{surj}.
		
		Each local arrow $\phi_v$ can be rewritten as $\phi_v = \oplus_{w \mid v} \phi_w$,
		where
		\[\phi_{w}: H^1(K_{\infty, w}, E[p]) \rightarrow H^1(K_{\infty, w}, E[p^\infty])[p]\]
		if $v \in S$ but $v \nmid p$,
		and
		\[\phi_{w}: H^1(K_{\infty, w}, \tilde{E}_v[p]) \rightarrow (H^1(K_{\infty, w}, E[p^\infty])/\Ima(\kappa_{\infty, w}))[p]\]
		if $v \mid p$.
		Applying the snake lemma shows that
		\[\coker(\phi) \subseteq \bigoplus_{w \mid S, w \nmid p} \ker(\phi_w) \oplus \bigoplus_{w  \mid p} \ker(\phi_w),\]
		and the equality holds if and only if $\xi_p$ is surjective. It follows that 
		\[\dim_{\F_p}{\coker(\phi)} \leq \sum_{w \mid S, w \nmid p} \dim_{\F_p} E(K_{\infty, w})[p] + \sum_{w \mid p} \dim_{\F_p} \tilde{E}_v(k_{\infty, w})[p]\]
		by Theorem \ref{ker}. Note that it is not true in general that the sum on the right hand side is finite. However, we assumed that $E(K_{\infty, w})[p] = E(K_v)[p] = 0$ for primes $v$ that are infinitely split in $K_\infty/K$ (hypothesis \nameref{van}). For $K_\infty = K_\ac$, these are the primes that are inert in $K/\Q$.
		
		Once again, the equality holds if and only if $\xi_p$ is surjective.
	\end{proof}
	

\begin{lemma} \label{cork-equal}
	Suppose that hypothesis \nameref{cotor} holds for $E/K$. Then we have
	\begin{enumerate}[(i)]
		\item $\corank_{\Lambda(G)} H^1(K_S/K_\infty, E[p^\infty]) = \sum_{v \mid p} \corank_{\Lambda(G)} J_v^{1}(K_\infty, E[p^\infty]).$ 
		\item $\corank_{\Omega(G)} H^1(K_S/K_\infty, E[p]) = \sum_{v \mid p} \corank_{\Omega(G)} \tilde{K}_v^{1}(K_\infty, E[p]).$
	\end{enumerate}
\end{lemma}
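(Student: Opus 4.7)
The plan is to prove both (i) and (ii) by showing each side equals $[K:\Q]$, mirroring the strategy used in the proof of Theorem \ref{thm:weak-Leop}.

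For (i), I would start with the defining exact sequence
\[
0 \to \Sel(K_\infty, E[p^\infty]) \to H^1(K_S/K_\infty, E[p^\infty]) \to \bigoplus_{v \in S} J_v^{1}(K_\infty, E[p^\infty]).
\]
By \nameref{cotor} the leftmost term is $\Lambda(G)$-cotorsion, and by \cite[Proposition 2]{Gr89} the local terms for $v \nmid p$ have $\Lambda(G)$-corank zero, yielding the inequality $\corank_\Lambda H^1(K_S/K_\infty, E[p^\infty]) \le \sum_{v \mid p} \corank_\Lambda J_v^1$. For the reverse inequality I would apply Theorem \ref{thm:weak-Leop} (valid under the standing assumptions \nameref{cotor} and \nameref{p-ram} in Section \ref{classical}) to deduce \nameref{weak-Leop}, and then use \cite[Proposition 3]{Gr89} to conclude $\corank_\Lambda H^1 \ge [K:\Q]$. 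The local formula \cite[Theorem 4.9]{CG96} under \nameref{p-ram} gives $\sum_{v \mid p} \corank_\Lambda J_v^1 = [K:\Q]$, sandwiching both sides at $[K:\Q]$.

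For (ii), I would run the analogous argument at the mod-$p$ level using the defining sequence of the fine residual Selmer group
\[
0 \to R(K_\infty, E[p]) \to H^1(K_S/K_\infty, E[p]) \xrightarrow{\xi_p} \bigoplus_{v \in S} \tilde{K}_v^{1}(K_\infty, E[p]).
\]
Theorem \ref{mod-p} combined with \nameref{cotor} shows $R(K_\infty, E[p])$ is $\Omega(G)$-cotorsion. Theorem \ref{ker}(i) gives the global identification $H^1(K_S/K_\infty, E[p]) \simeq H^1(K_S/K_\infty, E[p^\infty])[p]$, while Theorem \ref{ker}(ii)--(iii) provide surjections $\tilde{K}_v^1(K_\infty, E[p]) \twoheadrightarrow J_v^1(K_\infty, E[p^\infty])[p]$ with finite kernel at every $v \in S$. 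Together with the cofreeness of $H^1(K_S/K_\infty, E[p^\infty])$ and the relevant $J_v^1$'s, which follows from \cite[Propositions 4--5]{Gr89} under \nameref{weak-Leop}, these observations transfer the corank identity in (i) to the mod-$p$ setting, giving both sides of (ii) equal to $[K:\Q]$.

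The main anticipated obstacle is the passage from $\Lambda$-coranks to $\Omega$-coranks of $p$-torsion: in general $\corank_\Omega M[p]$ exceeds $\corank_\Lambda M$ by the number of $\mu$-summands appearing in $M^\vee$. The technical heart of (ii) is therefore verifying that no such spurious $\mu$-summands appear in $H^1(K_S/K_\infty, E[p^\infty])$ or in the local terms $J_v^1(K_\infty, E[p^\infty])$ for $v \mid p$, so that the $p$-torsion faithfully records the $\Lambda$-corank computed in (i).
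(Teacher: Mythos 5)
Your proof of part (i) is essentially the paper's proof: the paper simply says the computations in Theorem \ref{thm:weak-Leop} give both sides equal to $[K:\Q]$, and you reconstruct exactly those computations (the defining sequence for $\Sel$, \nameref{cotor}, \cite[Propositions 2--3]{Gr89}, and \cite[Theorem 4.9]{CG96} under \nameref{p-ram}). For part (ii), you also follow the paper's route: the paper states that (ii) follows from (i) together with Theorem \ref{ker} by the same argument as \cite[Proposition 4.12]{CS23}, and your use of Theorem \ref{ker}(i)--(iii) to translate the corank identity to the mod-$p$ level is precisely that strategy.

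Where your write-up is imprecise is the justification of the final transfer step, which you yourself flag as the ``technical heart.'' You attribute ``cofreeness of $H^1(K_S/K_\infty, E[p^\infty])$'' to \cite[Propositions 4--5]{Gr89} under \nameref{weak-Leop}, but these propositions do not assert that. Proposition 4 gives that $H^2(K_S/K_\infty, E[p^\infty])$ is $\Lambda$-cofree, and Proposition 5 gives (under $H^2 = 0$) only that the dual of $H^1$ has no nontrivial finite $\Lambda$-submodule --- a strictly weaker statement than cofreeness, since for instance $\Lambda/p$ has no finite submodule yet is far from free. The quantity you actually need to control is the $\mu$-contribution to $\corank_\Omega(M[p]) - \corank_\Lambda(M)$ for the global and local $H^1$'s, and in fact $\mu\left(H^1(K_S/K_\infty, E[p^\infty])^\vee\right)$ is \emph{not} zero in general (via the Jannsen spectral sequence it is controlled by the $\mu$-invariant of $\HIw^2$, equivalently of the fine Selmer group). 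So the correct argument must show that the $\mu$-contributions match on the two sides of (ii) rather than that each side has $\mu = 0$; this is the content carried by the cited \cite[Proposition 4.12]{CS23}, and your appeal to Gr89 alone does not close that gap. The remainder of your outline --- the defining sequence for $R(K_\infty, E[p])$, the cotorsion of $R$ via Theorem \ref{mod-p} and \nameref{cotor}, and the local surjections with finite kernel from Theorem \ref{ker} --- is consistent with the paper.
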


\begin{proof}
	\begin{enumerate}[(i)]
		\item The computations in Theorem \ref{thm:weak-Leop} immediately give \[\corank_{\Lambda(G)} H^1(K_S/K_\infty, E[p^\infty]) = \sum_{v \mid p} \corank_{\Lambda(G)} J_v(K_\infty, E[p^\infty]) = [K:\Q].\] 
		\item This follows from part (i) and Theorem \ref{ker}, following the same argument as \cite[Proposition 4.12]{CS23}.
	\end{enumerate} 
\end{proof}

Consider the composition
\[\pr_{p}: \bigoplus_{v \in S} \tilde{K}_v^{1}(K_\infty, E[p])  \rightarrow \bigoplus_{v \mid p} \tilde{K}_v^{1}(K_\infty, E[p])\]
 and let $\vartheta_{p}$ denote the composition $\pr_p \circ \xi_p.$ 

\begin{thm} Suppose that $E/K$ is an elliptic curve satisfying hypotheses \nameref{ord}, \nameref{surj}, \nameref{cotor}, \nameref{van} and \nameref{p-ram}. \label{equiv}
	The following conditions are equivalent:
	\begin{enumerate}[(i)]
		\item $\vartheta_p$ is surjective and $\mu(Y(K_\infty, E[p^\infty])) = 0$.
		\item $\xi_p$ is surjective and $\mu(Y(K_\infty, E[p^\infty])) = 0$.
		\item $R(K_\infty, E[p])$ is $\Omega(G)$-cotorsion.
		\item $X(K_\infty, E[p^\infty])$ has trivial $\mu$-invariant.
	\end{enumerate}
\end{thm}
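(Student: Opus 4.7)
The plan is to proceed via the fine residual Selmer group $R(K_\infty, E[p])$ and its orthogonal companion $\calR(K_\infty, E[p])$, invoking Poitou-Tate duality at the $K_\infty$-level together with the freeness of $\calR$ over $\Omega(G)$.

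I would first establish (ii) $\iff$ (iii). Theorem~\ref{mod-p} provides an injection $\phi: R(K_\infty, E[p]) \hookrightarrow \Sel(K_\infty, E[p^\infty])[p]$ with finite cokernel, so the two share the same $\Omega(G)$-corank. By \nameref{cotor}, $X(K_\infty, E[p^\infty])$ is $\Lambda(G)$-torsion, and the structure theorem shows $\mu(X) = 0$ is equivalent to $X/pX$ being $\Omega(G)$-torsion. Pontryagin duality identifies $(X/pX)^{\vee}$ with $\Sel(K_\infty, E[p^\infty])[p]$, so (iii) amounts to $\Sel(K_\infty, E[p^\infty])[p]$ having $\Omega(G)$-corank zero, which by the corank equality above is exactly (ii).

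For (i) $\iff$ (ii), the central object is $\coker(\xi_p)$. The defining four-term exact sequence for $R(K_\infty, E[p])$, combined with Lemma~\ref{cork-equal}(ii) and the vanishing $\corank_{\Omega(G)} \tilde{K}_v^{1}(K_\infty, E[p]) = 0$ for $v \nmid p$ in $S$ (an analogue of \cite[Proposition~2]{Gr89}, with \nameref{van} handling infinitely split primes), yields the corank identity
\[ \corank_{\Omega(G)} R(K_\infty, E[p]) = \corank_{\Omega(G)} \coker(\xi_p). \]
Taking the direct limit of the Poitou-Tate nine-term sequence at the finite layers $K_n$, with local conditions $W_v$ and their duals $W_v^{\perp}$, produces
\[ H^1(K_S/K_\infty, E[p]) \xrightarrow{\xi_p} \bigoplus_{v \in S} \tilde{K}_v^{1}(K_\infty, E[p]) \to \calR(K_\infty, E[p])^{\vee} \to H^2(K_S/K_\infty, E[p]), \]
so that whenever $H^2(K_S/K_\infty, E[p]) = 0$ one obtains $\coker(\xi_p) \simeq \calR(K_\infty, E[p])^{\vee}$.

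To close the loop, I use the surjection $X \twoheadrightarrow Y$ dual to $\Sel_{\fine}(K_\infty, E[p^\infty]) \hookrightarrow \Sel(K_\infty, E[p^\infty])$, which by additivity of $\mu$ forces $\mu(Y) \leq \mu(X)$. For (i) $\Rightarrow$ (ii): the hypothesis $\mu(Y) = 0$ and Theorem~\ref{main-1} give $H^2(K_S/K_\infty, E[p]) = 0$, whence $\coker(\xi_p) \simeq \calR(K_\infty, E[p])^{\vee}$; surjectivity of $\xi_p$ annihilates this, so $R$ is cotorsion. Conversely, (ii) gives $\mu(X) = 0$ by the first equivalence, hence $\mu(Y) = 0$ and again $H^2(K_S/K_\infty, E[p]) = 0$; the corank identity then forces $\calR(K_\infty, E[p])^{\vee} \simeq \coker(\xi_p)$ to be $\Omega(G)$-cotorsion, and since $\calR(K_\infty, E[p])$ is free over the principal ideal domain $\Omega(G)$ by Lemma~\ref{free}, its Pontryagin dual is cotorsion only when $\calR(K_\infty, E[p]) = 0$, which compels $\xi_p$ to be surjective.

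The main technical obstacle will be correctly assembling the Poitou-Tate exact sequence at the $K_\infty$-level from the sequences at each finite layer, in particular the identification $\varinjlim_n \frakR(K_n, E[p])^{\vee} \simeq \calR(K_\infty, E[p])^{\vee}$. This requires a careful duality check that the corestriction maps in the inverse system defining $\calR$ correspond, under Pontryagin duality, to restriction maps on the dual direct system, a standard compatibility orchestrated by the local Tate pairings governing the orthogonality between $W_v$ and $W_v^{\perp}$.
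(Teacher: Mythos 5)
Your argument is correct and follows essentially the same route as the paper: establish (ii)~$\iff$~(iii) via Theorem~\ref{mod-p} and the $X/pX$ criterion for $\mu=0$, then use the Cassels--Poitou--Tate four-term sequence together with the freeness of $\calR(K_\infty, E[p])$ from Lemma~\ref{free} and the corank matching from Lemma~\ref{cork-equal} to close the loop on (i). Two minor points worth flagging: for the direction $\text{(i)}\Rightarrow\text{(ii)}$, you detour through $H^2(K_S/K_\infty, E[p]) = 0$ and the Poitou--Tate identification of $\coker(\xi_p)$ with $\calR(K_\infty,E[p])^{\vee}$, whereas surjectivity of $\xi_p$ alone already gives the three-term short exact sequence $0\to R\to H^1\to \bigoplus_v \tilde{K}_v^1\to 0$, and Lemma~\ref{cork-equal} then makes $R$ cotorsion directly---the detour is harmless but unnecessary; and when you invoke Theorem~\ref{main-1} to get $H^2(K_S/K_\infty, E[p])=0$ from $\mu(Y)=0$, you should note explicitly that its hypothesis \nameref{weak-Leop} is supplied by Theorem~\ref{thm:weak-Leop} under \nameref{cotor} and \nameref{p-ram}, as the paper does.
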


\begin{proof}
	We first show that (i) and (ii) are equivalent. Since $\pr_p$ is surjective, (ii) clearly implies (i). Conversely, suppose that $\vartheta_p$ is surjective and consider the commutative diagram
	
	\[
	\begin{tikzcd}
		H^1(K_S/K_\infty, E[p]) \arrow[rd, "\vartheta_p"] \arrow[r, "\xi_p"] &  \bigoplus_{v \in S} \tilde{K}_v^{1}(K_\infty, E[p]) \arrow[d, "\pr_p"] \\
		&  \bigoplus_{v \mid p} \tilde{K}_v^{1}(K_\infty, E[p]).
	\end{tikzcd}
	\]
	
	This induces the exact sequence
	\[\ker(\pr_p) \rightarrow \coker(\xi_p) \rightarrow \coker(\vartheta_p) = 0.\]
	Note that $\ker(\pr_p) = \bigoplus_{v \in S, v \nmid p} \tilde{K}_v^{1}(K_\infty, E[p])$ is $\Omega(G)$-cotorsion (see the Proof of \cite[Proposition 4.12]{CS23}).  Hence $\coker(\xi_p)$ is also $\Omega(G)$-cotorsion. By Theorem \ref{thm:weak-Leop}, we conclude that the weak Leopoldt conjecture holds, i.e. that $H^2(K_S/K_\infty, E[p^\infty]) = 0$. Theorem \ref{main-1} then gives $H^2(K_S/K_\infty, E[p]) = 0$. 
	
	It then follows from the Cassels-Poitou-Tate exact sequence that the following is exact:
	\begin{align*}
		0 \rightarrow R(K_\infty, E[p]) \rightarrow H^1(K_S/K_\infty, E[p]) \xrightarrow{\xi_p} \bigoplus_{v \in S} \tilde{K}_v^{1}(K_\infty, E[p]) \rightarrow \calR(K_\infty, E[p])^{\vee}\rightarrow 0.
	\end{align*} 

	So $\coker(\xi_p) = \calR(K_\infty, E[p])^{\vee}$ is $\Omega(G)$-cotorsion, which is also cofree by Lemma \ref{free}. Thus, $\coker(\xi_p) = 0,$ as required.

	We show that (ii) implies (iii) as follows. Since $\xi_p$ is surjective, the defining sequence for the fine residual Selmer group is exact:
	$$0\rightarrow R(K_\infty, E[p]) \rightarrow H^1(K_S/K_\infty, E[p]) \xrightarrow{\xi_p} \bigoplus_{v \in S} \tilde{K}_v^{1}(K_\infty, E[p]) \rightarrow 0.$$
	By Lemma \ref{cork-equal}, the $\Omega(G)$-coranks of the last two terms are equal. Therefore, $R(K_\infty, E[p])$ must be $\Omega(G)$-cotorsion. 
	Moreover, (iv) and (iii) are equivalent, by Theorem \ref{mod-p} and the fact that $X(K_\infty, E[p^\infty])$ has trivial $\mu$-invariant if and only if $X(K_\infty, E[p^\infty])/ p X(K_\infty, E[p^\infty])$ is finite as a $\Omega(G)$-module.
	
	It remains to show that (iii) implies (ii). Indeed, (iii) implies (iv) which implies that $\mu(Y(K_\infty, E[p^\infty])) = 0$ because $Y(K_\infty, E[p^\infty])$ is a quotient of $X(K_\infty, E[p^\infty])$. Once again, we have the following exact sequence:
	\begin{align*}
		0 \rightarrow R(K_\infty, E[p]) \rightarrow H^1(K_S/K_\infty, E[p]) \xrightarrow{\xi_p} \bigoplus_{v \in S} \tilde{K}_v^{1}(K_\infty, E[p]) \rightarrow \calR(K_\infty, E[p])^{\vee}\rightarrow 0.
	\end{align*} 
	
	Observe that $\corank_{\Omega(G)} H^1(K_S/K_\infty, E[p]) = \corank_{\Omega(G)} \bigoplus_{v \in S} \tilde{K}_v^{1}(K_\infty, E[p]) = 2$ by Lemma \ref{cork-equal} above. Thus, \[\corank_{\Lambda(G)} R(K_\infty, E[p]) = \corank_{\Lambda(G)} \calR(K_\infty, E[p])^{\vee}\] and condition (iii) implies $\calR(K_\infty, E[p])^{\vee}$ is $\Omega(G)$-torsion. Under hypothesis \nameref{cotor}, Lemma \ref{free} implies that $\calR(K_\infty, E[p])$ is free, hence it must follow that $\calR(K_\infty, E[p]) = 0$, as desired. 
\end{proof}

We are now ready to prove the first main result, which is Theorem \ref{mu}.
\begin{proof}[Proof of Theorem \ref{mu}]
	For $j = 1, 2$ let $\vartheta_{p, j}$ be the map 
	\[\vartheta_{p, j}: H^1(K_S/K_\infty, E_j[p]) \rightarrow \bigoplus_{v \mid p} \tilde{K}_v^{1}(K_\infty, E_j[p])\]
	for the residual representation $E_j[p]$.
	By  Theorem \ref{main-1} regarding fine Selmer groups and the equivalence between (i) and (iii) in Theorem \ref{equiv}, it suffices to show that $\vartheta_{p, 1}$ is surjective if and only if $\vartheta_{p, 2}$ is surjective. 
	
	Let $(\tilde{E_1})_v, (\tilde{E_2})_v$ be the reductions modulo $v$ of $E_1, E_2$ at some prime $v \mid p$ in $K$. Fix an isomorphism $E_1[p] \simeq E_2[p]$, then there is an induced isomorphism $(\tilde{E_1})_v[p] \simeq (\tilde{E_2})_v[p]$ such that the diagram
	$$\begin{tikzcd}
		E_1[p] \arrow[r] \arrow[d, "\simeq"] & (\tilde{E_1})_v[p] \arrow[d, "\simeq"] \\
		E_2[p] \arrow[r] & (\tilde{E_2})_v[p] 
	\end{tikzcd}$$
	commutes, by the proof of \cite[Proposition 3.9]{CS23}. The vertical isomorphisms induce the following isomorphisms in local cohomology
	\[H^1(K_{\infty, w}, (\tilde{E}_1)_v[p]) \simeq H^1(K_{\infty, w}, (\tilde{E}_2)_v[p])\]
	for $w \mid p$. This shows that $\vartheta_{p, 1}$ is surjective if and only if $\vartheta_{p, 2}$ is surjective. 
\end{proof}

When the $\mu$-invariant vanishes for both $E_1$ and $E_2$, one can derive a comparison formula for the $\lambda$-invariants. First of all, let us give the following definition.
\begin{defn} \label{expl-const}
	We will denote by $\delta_E$ the quantity appearing in the inequality of Theorem \ref{mod-p}:
	\[\delta_E = \sum_{w \mid S, w \nmid p} \dim_{\F_p} E(K_{\infty, w})[p] + \sum_{w \mid p} \dim_{\F_p} \tilde{E}_v(k_{\infty, w})[p]\]
\end{defn}

\begin{thm} \label{lambda}
	Suppose $E_1/K$, $E_2/K$ are elliptic curves such that $E_1[p] \simeq E_2[p]$ as $\Gal(\overline{K}/K)$-modules and that they satisfy hypotheses \nameref{ord}, \nameref{cotor}, \nameref{surj}, \nameref{p-ram}, \nameref{van} and that \[\mu(\Sel(K_\infty, E_1[p^\infty])) = 0. \] Then $\mu(\Sel(K_\infty, E_2[p^\infty])) = 0$
	and $\lambda(\Sel(K_\infty, {E_j})) = \rho + \delta_{E_j}$, where $\rho$ only depends on the residual representation $E_1[p] \simeq E_2[p]$ and $\delta_{E_j}$ is defined in Definition \ref{expl-const}. 
\end{thm}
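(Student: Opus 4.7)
The first claim, $\mu(\Sel(K_\infty, E_2[p^\infty])) = 0$, is immediate from Theorem \ref{mu}. For the $\lambda$-formula, my plan is to isolate the part of $\Sel(K_\infty, E_j[p^\infty])[p]$ that depends only on the residual representation by means of the fine residual Selmer group, and then bridge the passage from $\dim_{\F_p} \Sel(K_\infty, E_j[p^\infty])[p]$ to $\lambda(\Sel(K_\infty, E_j[p^\infty]))$ via a no-finite-submodule statement for the dual.

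Step 1. Since $\mu(\Sel(K_\infty, E_j[p^\infty])) = 0$ for $j = 1, 2$, the implication (iii) $\Rightarrow$ (i) of Theorem \ref{equiv} gives that the map $\xi_{p, j}$ is surjective and that $R(K_\infty, E_j[p])$ is $\Omega(G)$-cotorsion, hence finite over $\F_p$. Surjectivity of $\xi_{p, j}$ forces the inequality in Theorem \ref{mod-p} to be an equality, producing a short exact sequence of finite $\F_p$-vector spaces
\[0 \to R(K_\infty, E_j[p]) \to \Sel(K_\infty, E_j[p^\infty])[p] \to C_j \to 0\]
with $\dim_{\F_p} C_j = \delta_{E_j}$. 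The group $R(K_\infty, E[p])$ is built only out of $E[p]$ and the residual reductions $\tilde{E}_v[p]$ at primes $v \mid p$; the isomorphism $E_1[p] \simeq E_2[p]$ induces a compatible isomorphism $(\tilde{E}_1)_v[p] \simeq (\tilde{E}_2)_v[p]$ by \cite[Proposition 3.9]{CS23} (as already invoked in the proof of Theorem \ref{mu}), so $R(K_\infty, E_1[p]) \simeq R(K_\infty, E_2[p])$. Setting $\rho := \dim_{\F_p} R(K_\infty, E_j[p])$, this quantity depends only on the residual representation, and
\[\dim_{\F_p} \Sel(K_\infty, E_j[p^\infty])[p] = \rho + \delta_{E_j}.\]

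Step 2. To conclude, I need $\lambda(\Sel(K_\infty, E_j[p^\infty])) = \dim_{\F_p} \Sel(K_\infty, E_j[p^\infty])[p]$. With $\mu = 0$, the dual $X_j := \Sel(K_\infty, E_j[p^\infty])^\vee$ is finitely generated over $\Z_p$, and its $\Z_p$-rank equals $\lambda_j := \lambda(\Sel(K_\infty, E_j[p^\infty]))$; the desired identity follows as soon as $X_j$ has no nontrivial finite $\Lambda$-submodule, since then $X_j \simeq \Z_p^{\lambda_j}$ has no $\Z_p$-torsion and $\dim_{\F_p} (X_j/pX_j) = \lambda_j$. Hypothesis \nameref{surj} together with Remark \ref{rmk:surj} gives $H^0(K_S/K_n, E_j[p^\infty]) = 0$ at every finite layer, and combined with \nameref{weak-Leop} (which holds by Theorem \ref{thm:weak-Leop}), a standard Greenberg-type argument rules out finite $\Lambda$-submodules of $X_j$. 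This no-finite-submodule property is the main obstacle, since the usual references are written in the cyclotomic setting; one must check that the same proof goes through in our general $\Z_p$-extension context. Assuming this, combining with the displayed equation of Step 1 yields $\lambda(\Sel(K_\infty, E_j[p^\infty])) = \rho + \delta_{E_j}$, as desired.
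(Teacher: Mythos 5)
Your proof is correct and takes essentially the same route as the paper: $\mu$-vanishing via Theorem \ref{mu}, the short exact sequence from Theorem \ref{mod-p} made exact on the right by the surjectivity of $\xi_{p,j}$ (Theorem \ref{equiv}), and the passage from $\dim_{\F_p}$ of the mod-$p$ Selmer group to the $\lambda$-invariant through the absence of nonzero finite $\Lambda$-submodules. The paper works with the dual $X_j/pX_j$ rather than $\Sel(K_\infty, E_j[p^\infty])[p]$, but these are Pontryagin dual $\F_p$-vector spaces, so that is cosmetic. Two points of comparison are worth flagging. First, your "main obstacle" in Step~2 is already resolved in the paper by a direct citation to \cite[Proposition~4.14]{Gre99}, which Greenberg states for a $\Lambda$-cotorsion Selmer group over a general $\Z_p$-extension (not just the cyclotomic one), so no further argument is required under \nameref{cotor}; the appeal you make to \nameref{weak-Leop} and a "Greenberg-type argument" is more than is needed. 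Second, your explicit observation that $R(K_\infty, E_1[p]) \simeq R(K_\infty, E_2[p])$ via the compatible isomorphisms of $E_j[p]$ and $(\tilde E_j)_v[p]$ (from \cite[Proposition~3.9]{CS23}, as in the proof of Theorem \ref{mu}) is a genuine improvement: the paper sets $\rho = \dim_{\F_p} R(K_\infty, E_j[p])^\vee$ and asserts independence of $j$ without spelling this out, so your Step~1 supplies the missing justification.
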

\begin{proof}
	It follows directly from Theorem \ref{mu} that $\mu(\Sel(K_\infty, E_2[p^\infty])) = 0.$
	
	It is also known that $X(K_\infty, E_j[p^\infty])$ does not have any finite-index submodule for $j = 1, 2$, by \cite[Proposition 4.14]{Gre99}. Since $X(K_\infty, E_j[p^\infty])$ are also $\Lambda(G)$-torsion for $j = 1, 2$ and 
	\[\mu(\Sel(K_\infty, E_1[p^\infty])) = \mu(\Sel(K_\infty, E_2[p^\infty])) = 0,\] one must have that
	\[\lambda(\Sel(K_\infty, {E_j}[p^\infty])) = \dim_{\F_p} (X(K_\infty, E_j[p^\infty])/ pX(K_\infty, E_j[p^\infty]))\]
	By taking the dual of the inclusion in Theorem \ref{mod-p} for $E_j$, one obtains an exact sequence
	\[V_j \hookrightarrow X(K_\infty, E_j[p^\infty]) / pX(K_\infty, E_j[p^\infty]) \twoheadrightarrow {R(K_\infty, E_j[p])}^{\vee},\]
	where $V_j$ is the dual of the cokernel of $R(K_\infty, E_j[p]) \hookrightarrow \Sel(K_\infty, E_j)[p]$. Since $\mu(\Sel(K_\infty, {E_j})) = \mu(\Sel(K_\infty, {E_j})) = 0$, 
	each $\xi_{p, j}$ in Theorem \ref{mod-p} is surjective by Theorem \ref{equiv} for $j = 1, 2$. Taking the $\F_p$-dimension of every term in the short exact sequence above gives:
	\[\lambda(\Sel(K_\infty, {E_j})) = \dim_{\F_p}R(K_\infty, E_j[p])^{\vee} + \dim_{\F_p}(V_j).\]
	Note that $\dim_{\F_p}(V_j) = \delta_{E_j}$ as defined in \ref{expl-const}. One obtains the required result by letting $\rho = \dim_{\F_p}R(K_\infty, E_j[p])^\vee$.
\end{proof}

In the remainder of this section, we give a formula for comparing $\Lambda(H)$-coranks of Selmer groups over the compositum $F_\infty$ of all $\Z_p$-extensions when $\mu = 0$ over one particular $\Z_p$-extension $K_\infty$, where $H = \Gal(F_\infty/K_\infty)$. An analogous result is obtained in \cite{Lim18} where  $K_\infty$ is the cyclotomic extension over $K$ and $F_\infty$ is a certain $p$-adic Lie extension containing $K_\infty$. However, we do not assume that $K_\infty$ is the cyclotomic extension, but rather a general $\Z_p$-extension.

As before, let $E/K$ be an elliptic curve. Consider the $\Z_p^2$-extension $F_\infty/K$ where 
\[F_\infty = \bigcup_{\Gal(L_\infty/K) \simeq \Z_p} L_\infty.\] When the hypotheses \nameref{ord}, \nameref{cotor}, \nameref{p-ram}, \nameref{van} are satisfied for $E$, there is a morphism
\[\Sel(K_\infty, E[p^\infty]) \rightarrow \Sel(F_\infty, E[p^\infty])^H\]
with finite kernel and cokernel, following the proof of \cite[Proposition 2.8, Proposition 4.1]{KMR23}. In particular, when $\mu(E) = 0$, $\Sel(K_\infty, E[p^\infty])$ is co-finitely generated over $\Lambda(H)$.



Moreover, when $E/\Q$ also satisfies hypothesis \nameref{surj}, we may conclude that $E(F_\infty)[p^\infty] = 0$ following an argument similar to Remark \ref{rmk:surj}.

Then, as remarked on \cite[page 30]{KMR23}, it follows from the proof of \cite[Proposition 4.2]{KMR23} that $\corank_{\Lambda(H)}(X(E/F_\infty)) = \lambda(X(E/K_\infty))$. The following theorem is a straightforward corollary of Theorem \ref{lambda}:

\begin{thm} \label{thm:Z_p^2}
	Suppose $E_1/K$, $E_2/K$ are elliptic curves such that $E_1[p] \simeq E_2[p]$ and the hypotheses \nameref{ord}, \nameref{cotor}, \nameref{surj}, \nameref{p-ram}, \nameref{van} hold for both $E_1$ and $E_2$. Further suppose that $\mu(\Sel(K_\infty, E_1[p^\infty])) = 0$. Then $\mu(\Sel(K_\infty, E_2[p^\infty])) = 0$,
	and $$\corank_{\Lambda(H)}(\Sel(F_\infty, {E_j})) = \rho + \delta_{E_j},$$ where $\rho$ only depends on the residual representation $E_1[p] \simeq E_2[p]$ and $\delta_{E_j}$ is in terms of local constants as defined in Definition \ref{expl-const}.  
\end{thm}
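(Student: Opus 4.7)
The plan is to deduce Theorem \ref{thm:Z_p^2} as essentially a repackaging of Theorem \ref{lambda} combined with the control-theoretic identity $\corank_{\Lambda(H)}(X(E/F_\infty)) = \lambda(X(E/K_\infty))$ recorded in the paragraphs preceding the statement. The hypotheses are exactly those required to invoke Theorem \ref{mu} and Theorem \ref{lambda}, so the bulk of the work has already been done; the only new ingredient is the passage from $K_\infty$ up to the $\Z_p^2$-extension $F_\infty$ via $H = \Gal(F_\infty/K_\infty)$.

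First I would apply Theorem \ref{mu} to the pair $(E_1, E_2)$. All five hypotheses \nameref{ord}, \nameref{cotor}, \nameref{surj}, \nameref{p-ram}, \nameref{van} are assumed to hold for both curves, and $E_1[p] \simeq E_2[p]$ as $\Gal(\overline{K}/K)$-modules, so from $\mu(\Sel(K_\infty, E_1[p^\infty])) = 0$ I immediately conclude $\mu(\Sel(K_\infty, E_2[p^\infty])) = 0$. This also places both curves in the regime where Theorem \ref{lambda} applies, yielding
\[\lambda(\Sel(K_\infty, E_j[p^\infty])) = \rho + \delta_{E_j} \quad (j = 1, 2),\]
with $\rho$ depending only on the common residual representation and $\delta_{E_j}$ the local quantity of Definition \ref{expl-const}.

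Next I would transfer this equality to $F_\infty$. The discussion immediately preceding the theorem exhibits, under \nameref{ord}, \nameref{cotor}, \nameref{p-ram}, \nameref{van}, a natural restriction map $\Sel(K_\infty, E_j[p^\infty]) \rightarrow \Sel(F_\infty, E_j[p^\infty])^H$ with finite kernel and cokernel, invoking the proofs of \cite[Proposition 2.8, Proposition 4.1]{KMR23}. Combined with hypothesis \nameref{surj} (which yields $E_j(F_\infty)[p^\infty] = 0$ by the argument of Remark \ref{rmk:surj}), this is exactly the input needed to apply \cite[Proposition 4.2]{KMR23}, giving
\[\corank_{\Lambda(H)}(X(K_\infty, E_j[p^\infty])) = \lambda(X(K_\infty, E_j[p^\infty])).\]
Substituting the formula from Theorem \ref{lambda} then produces the asserted identity $\corank_{\Lambda(H)}(\Sel(F_\infty, E_j)) = \rho + \delta_{E_j}$.

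There is no real obstacle in this argument, since each ingredient has already been established in the paper or cited from \cite{KMR23}. The only point requiring mild care is verifying that the hypotheses assumed here are strong enough to invoke the cited control-theorem-style results of \cite{KMR23} verbatim; in particular, that \nameref{surj} suffices to rule out $E_j(F_\infty)[p^\infty]$ and that \nameref{p-ram} together with \nameref{van} handles the local terms in the Poitou--Tate style diagram used in their Proposition 4.2. Assuming those verifications go through as in the cyclotomic case treated by \cite{Lim18}, the theorem follows directly by chaining Theorem \ref{mu}, Theorem \ref{lambda}, and the corank identity.
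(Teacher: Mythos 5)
Your proposal matches the paper's intended argument exactly: the paper states the theorem as a straightforward corollary of Theorem \ref{lambda} together with the corank identity $\corank_{\Lambda(H)}(X(F_\infty, E_j[p^\infty])) = \lambda(X(K_\infty, E_j[p^\infty]))$ from the preceding discussion, which is precisely the chain Theorem \ref{mu} $\Rightarrow$ Theorem \ref{lambda} $\Rightarrow$ corank identity that you lay out. The only slip in your writeup is the typo in the displayed corank identity, where the left-hand side should be $X(F_\infty, E_j[p^\infty])$ rather than $X(K_\infty, E_j[p^\infty])$; your concluding line correctly uses $F_\infty$, so the argument itself is intact.
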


\section{Selmer groups over the False-Tate extension} \label{False-Tate}

We consider the False-Tate extension $F_\infty = K(\mu_{p^\infty}, m^{1/p^{\infty}})$ over $K = \Q(\mu_p)$ for some $p^{th}$-power free integer $m$. Following \cite[Appendix A]{DDCS07}, we define an elliptic curve $E/\Q$ to be regular over the False-Tate extension $F_\infty$ if $\Sel(F_\infty, E[p^\infty]) = 0$. For conditions under which this occurs, we refer readers to \cite[Corollary A.11]{DDCS07}. We will study this property with respect to congruent elliptic curves in Section \ref{sec:reg}. Moreover, $\Sel(F_\infty, E[p^\infty])$ can acquire positive $\Lambda(H)$-corank where $H = \Gal(F_\infty/K_\cyc) \simeq \Z_p$ \cite[Theorem A.32]{DDCS07}. When the $\Lambda(H)$-corank is $1$, there is an explicit formula for the growth of the Selmer groups over the finite layers of $F_\infty$ \cite[Theorem 4.8]{CFKS10}. This will be studied in Section \ref{sec:rk-1}. As before, we denote by $K_{\cyc}$ be the cyclotomic $\Z_p$-extension of $K$, i.e. $K_{\cyc} = \Q(\mu_{p^\infty})$. We shall also denote the finite layers of $K_\cyc$ by $K_n = \Q(\mu_{p^n})$. 

We recall the definitions of imprimitive Selmer groups and imprimitive Euler characteristics. For an elliptic curve $E/\Q$, let $S$ be a finite set of primes of $K$ containing the primes lying above $p$, the infinite primes and the primes of bad reduction of $E$. Denote by $K_S$ the maximal extension of $K$ unramified outside of $S$. Let $\Sigma_0 \subset S$ be a set of primes not containing the primes above $p$. For a Galois extension $L/K$ where $K \subset L \subset K_S$, we define the  $\Sigma_0$-imprimitive Selmer group  $\Sel^{\Sigma_0}(L, E[p^\infty])$ as the kernel of 
\[H^1(K_S/L, E[p^\infty]) \rightarrow \bigoplus_{v \mid S \setminus \Sigma_0} \bigoplus_{w \mid v} H^1(L_w, E[p^\infty])/\Ima(\kappa_w),\]
where $\kappa_w: E(L_w) \otimes \Q_p/\Z_p \rightarrow H^1(L_w, E[p^\infty])$ is the local Kummer map. In the special case where $\Sigma_0 = \emptyset$, one recovers Definition \ref{def:Sel}.
We will denote by $X^{\Sigma_0}(L, E[p^\infty])$ the Pontryagin dual of $\Sel^{\Sigma_0}(L, E[p^\infty])$ and denote $\chi(L/K, E; \Sigma_0)$ to be the Euler characteristic $\chi(\Gal(L/K), X^{\Sigma_0}(L, E[p^\infty]))$ defined in Section \ref{def:Euler-char}. This is called the imprimitive Euler characteristic. When $\Sigma_0 = \emptyset$, we simply denote $\chi(L/K, E)$ for $\chi(L/K, E; \Sigma_0).$ 

Throughout this section, $E_1/\Q, E_2/\Q$ will denote elliptic curves of respective conductors $N_1, N_2$ that are both ordinary at a fixed odd prime $p$ with isomorphic irreducible mod-$p$ representations $E_1[p] \simeq E_2[p]$. We will call $\overline{\rho}: G_\Q \rightarrow GL_2(\F_p)$ this mod-$p$ representation and denote by  $\overline{N}$ its conductor, following the notations of \cite{SS14}. In this section, let $K = \Q(\mu_p)$ and $v$ denote a non-archimedean prime in $K$. For such a pair of elliptic curves, we define the following hypothesis, which will be used in theorems \ref{cong:FT2}, \ref{cong:cyc} below.
\begin{hypo} \label{A}
	$v \mid \overline{N}$ for all primes $v$ where $E_2$ has split multiplicative reduction. In addition, for $p = 3$, $v \nmid \overline{N}$ or $c_v \in \Z_3^{\times}$ for all $v \mid 3$ where $E_2$ has additive reduction and $c_v$ is the local Tamagawa number of $E_2$ at $v$.		
\end{hypo}

\subsection{The regular case} \label{sec:reg}
Let us first examine the regular case. Consider an elliptic curve $E/\Q$ of conductor $N$ and the False-Tate extension $F_\infty = K_{\cyc}(m^{1/p^{\infty}})$, where $m$ is coprime with $pN$. We follow the notations in \cite{SS14}:

For a $p$-ordinary elliptic curve $E/\Q$ of conductor coprime with $m$, we let $\Sigma_3(E)$ be the set of primes $v$ of $K$ such that $v \mid m$, $v \nmid p$, $E$ has good reduction at $v$ and the corresponding reduced curve has a point of order $p$. This is the same as the set denoted $P_2^{(K)}$ mentioned in \cite[Section 4]{DDCS07}. We recall the following theorem from \cite{SS14}, which is the $\Sigma_0$-imprimitive extension of \cite[Theorem 4.10]{HV03}. Note that \cite[Theorem 4.10]{HV03} is stated for $p \geq 5$, but as remarked in the proof of \cite[Cor 2.8]{SS14}, such a result also holds for $p = 3$. 

\begin{thm} \cite[Cor 2.8]{SS14} \label{FT-cyc}
	If $\chi\left(F_{\infty} / K, E ; \Sigma_0\right)$ exists, so does $\chi\left(K_{\mathrm{cyc}} / K, E ; \Sigma_0\right)$ and we have that
	\[\chi\left(F_{\infty} / K, E ; \Sigma_0\right)=\chi\left(K_{\mathrm{cyc}} / K, E ; \Sigma_0\right) \times \prod_{v \in \Sigma_3(E)}\left|L_v(E, 1)\right|_p,
	\]
	where each $L_v(E, s)$ is the local $L$-factor of $E$ at $v$, as defined in \cite[p. 330]{SS14}, \cite[Remark 4.6]{DDCS07}.
\end{thm}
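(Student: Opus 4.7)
The plan is to follow the Hachimori--Venjakob strategy \cite{HV03} for comparing Euler characteristics in the two-step tower $K \subset K_{\mathrm{cyc}} \subset F_\infty$, adapted to the $\Sigma_0$-imprimitive setting. Setting $H = \Gal(F_\infty/K_{\mathrm{cyc}}) \simeq \Z_p$, I would assemble the natural restriction diagram comparing the defining four-term sequences of $\Sel^{\Sigma_0}(K_{\mathrm{cyc}}, E[p^\infty])$ and $\Sel^{\Sigma_0}(F_\infty, E[p^\infty])^H$, with vertical maps given by inflation on the global $H^1$ and restriction on each local factor. The snake lemma, combined with the Hochschild--Serre spectral sequence for $H$ acting on the cohomology over $F_\infty$, then gives a four-term exact sequence relating the dual Selmer groups up to local kernels and cokernels at each prime of $K_{\mathrm{cyc}}$. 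Multiplicativity of Euler characteristics in the two-step tower yields
$$\chi(F_\infty/K, E; \Sigma_0) = \chi(K_{\mathrm{cyc}}/K, E; \Sigma_0) \cdot \chi\bigl(H, X^{\Sigma_0}(F_\infty, E[p^\infty])\bigr),$$
so the problem reduces to computing the $H$-Euler characteristic on the right as a product of purely local factors, using that the global cohomological contribution vanishes by a standard weak Leopoldt / Poitou--Tate argument.

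Next I would carry out a place-by-place local analysis of the surviving terms. The extension $F_\infty/K_{\mathrm{cyc}}$ is unramified outside the primes above $m$, so at every prime $v \nmid mp$ of $K$ the restriction map on local cohomology is essentially an isomorphism and the local Euler characteristic is trivial. Primes above $p$ contribute trivially after matching the Kummer conditions at both levels and using that the local factors agree. For $v \mid m$ at which $E$ has bad reduction, the term is either removed by the $\Sigma_0$-imprimitive truncation (if $v \in \Sigma_0$) or handled by a direct computation that returns $1$. For $v \mid m$ at which $E$ has good reduction, the relevant local $H$-Euler characteristic evaluates to
$$\bigl|L_v(E,1)\bigr|_p = \bigl|\det(1 - \Fr_v \mid V_\ell(E)^{I_v})\bigr|_p = |1 - a_v + q_v|_p,$$
which is $1$ precisely when $\tilde E_v[p](k_v) = 0$ and a nontrivial power of $p$ exactly at the primes $v \in \Sigma_3(E)$, yielding the claimed product formula. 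Finiteness of $\chi(F_\infty/K, E; \Sigma_0)$ propagates through these equalities to force finiteness of each local factor, and hence of $\chi(K_{\mathrm{cyc}}/K, E; \Sigma_0)$.

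The main obstacle I expect is the local computation at primes $v \in \Sigma_3(E)$: one must carefully track the Frobenius action on $\tilde E_v[p^\infty]$ through the ramified direction $F_\infty/K_{\mathrm{cyc}}$ and verify that the $\Sigma_0$-imprimitive modification does not alter the Euler factor at these primes. A subsidiary subtlety is checking that no hidden contribution slips in from primes of bad reduction that lie in $\Sigma_0$, which requires that the local Selmer conditions at such primes behave compatibly under the Hochschild--Serre filtration. Once these local identifications are verified, the stated formula follows by assembling the local factors.
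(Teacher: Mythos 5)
This statement is cited in the paper as \cite[Cor 2.8]{SS14} with no proof given; the paper's own Remark near the statement explains that SS14's result is the $\Sigma_0$-imprimitive extension of \cite[Theorem 4.10]{HV03}, so the proof to compare against lives in those references. Your overall strategy (two-step tower, Hochschild--Serre, place-by-place local analysis at primes above $m$) is indeed the Hachimori--Venjakob line of attack, and your identification of the $\Sigma_3(E)$ local factor as $|1-a_v+q_v|_p$, nontrivial exactly when $\tilde E_v(k_v)[p] \neq 0$, is correct in spirit.

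However, the pivotal intermediate formula
\[
\chi(F_\infty/K, E; \Sigma_0) = \chi(K_{\mathrm{cyc}}/K, E; \Sigma_0) \cdot \chi\bigl(H, X^{\Sigma_0}(F_\infty, E[p^\infty])\bigr)
\]
is not a valid multiplicativity identity, and this is a genuine gap. The left side is $\chi(G, M)$ with $M = X^{\Sigma_0}(F_\infty, E[p^\infty])$; the standard tower formula (for $H \simeq \Z_p$ with quotient $\Gamma = G/H$) is $\chi(G, M) = \chi(\Gamma, M_H)/\chi(\Gamma, H_1(H, M))$, involving the $H$-\emph{homology} of the same module $M$ over $F_\infty$, not the dual Selmer group over $K_{\mathrm{cyc}}$. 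Your formula conflates these two modules. To bridge them one needs the control theorem comparing $X^{\Sigma_0}(F_\infty, E)_H$ with $X^{\Sigma_0}(K_{\mathrm{cyc}}, E)$, and the product $\prod_{v \in \Sigma_3(E)} |L_v(E,1)|_p$ arises precisely from the kernel and cokernel of that comparison map (local terms $H^1(H_w, E(F_{\infty,w}))$ at primes $w \mid m$ of $K_{\mathrm{cyc}}$), together with the contribution of $H_1(H, M)$. The cleanest packaging of all of this is the Akashi series formalism of HV03: one takes the alternating product of the $\Gamma$-characteristic elements of the $H_i(H, M)$, relates it to the characteristic element of $X^{\Sigma_0}(K_{\mathrm{cyc}}, E)$ via the control theorem, and evaluates at the trivial character. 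So your "multiplicativity plus local $H$-Euler characteristics" shortcut needs to be replaced by this more careful bookkeeping; as written, the reduction to purely local factors does not follow, and the role of $\chi(K_{\mathrm{cyc}}/K, E; \Sigma_0)$ in the formula is not justified.
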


\begin{rmk} \label{L-integrality}
	We remark that $L_v^{-1}(E, 1)$ is $p$-adically integral for each $v$. Indeed, $L_v^{-1}(E, 1)$ is given by $(Nv)^{-1} \# E(\F_v)$  where $Nv$ is the number of elements of $\F_v$ \cite[Remark 4.6]{DDCS07}, which is a $p$-adic unit since $v \nmid p$.
\end{rmk}

When $\mu(X(E/\Kcyc)) = 0$, it is known that $\chi(F_{\infty}/ K, E) = 1$ is equivalent to $E$ being regular over $F_\infty$  \cite[Prop A.9]{DDCS07}, i.e. that $X(F_\infty, E[p^\infty]) = 0$. As in \cite{SS14}, we give the following definition:

\begin{defn} \cite[Def 3.1]{SS14} \label{def:Sigma}
	Let $\Sigma\left(E_j\right)$ denote the set of primes $v$ such that $v \nmid \bar{N}$ and $E_j$ has split multiplicative reduction at $v$ for $j=1,2$. In the case $p=3$, we shall further assume that $\Sigma\left(E_j\right)$ also contains the finite primes $v$ of $K$ which satisfy the following conditions: (i) $v \mid N_j / \bar{N}$, (ii) $E_j$ has additive reduction at $v$ and (iii) the local Tamagawa number $c_v$ is not a 3 -adic unit.
\end{defn} 

We will leverage the following result of Sujatha-Shekhar to study the mod-$p$ invariance of regularity over $F_\infty$. 

\begin{thm} \label{cong:FT} \cite[Thm 3.4]{SS14} 
	Let $E_1/\Q, E_2/\Q$ be elliptic curves of conductors such that $E_1[p] \simeq E_2[p]$ as $G_\Q$-modules. Suppose that $F_\infty = K_\cyc(m^{1/p^\infty})$ is a False Tate extension where $m$ is coprime with $pN_1N_2$ where $N_1$ and $N_2$ are respectively the condutors of $E_1$ and $E_2$. Suppose that:
	\begin{enumerate}[(i)]
		\item $E_1(K)[p] = 0$.
		\item $\Sigma(E_2) \subset \Sigma_0 \subset \Sigma(E_1) \cup \Sigma(E_2)$.
		\item $\chi(F_\infty/K, E_1; \Sigma_0) = 1.$
	\end{enumerate}
	Then $\Sigma_0 = \emptyset$ and $\chi(F_\infty/K, E_2; \Sigma_0) = \chi(F_\infty/K, E_1) = \chi(F_\infty/K, E_2) = 1.$
\end{thm}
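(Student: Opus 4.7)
The plan is to reduce the False-Tate Euler characteristic statement to the cyclotomic level via Theorem \ref{FT-cyc}, transfer the unit condition from $E_1$ to $E_2$ using the congruence $E_1[p] \simeq E_2[p]$, and then exploit $p$-adic integrality of all factors to force $\Sigma_0 = \emptyset$.

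First I would apply Theorem \ref{FT-cyc} to $E_1$ with the imprimitive set $\Sigma_0$ to factor
\[1 = \chi(F_\infty/K, E_1; \Sigma_0) = \chi(K_\cyc/K, E_1; \Sigma_0) \cdot \prod_{v \in \Sigma_3(E_1)} |L_v(E_1, 1)|_p.\]
By Remark \ref{L-integrality} each $L$-factor is a $p$-adic integer, and the cyclotomic Euler characteristic is itself a $p$-adic integer (being a ratio of orders of finite $p$-groups times $p$-integral local contributions). Since a product of $p$-adic integers equals a unit only when each factor is a unit, I deduce $\chi(K_\cyc/K, E_1; \Sigma_0) = 1$ and $|L_v(E_1,1)|_p = 1$ for every $v \in \Sigma_3(E_1)$.

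Second, I would establish a prime-by-prime congruence comparison of cyclotomic $\Sigma_0$-imprimitive Euler characteristics. The condition $\Sigma(E_2) \subset \Sigma_0 \subset \Sigma(E_1) \cup \Sigma(E_2)$ is tailored precisely so that outside $\Sigma_0$, the local contribution to $\chi(K_\cyc/K, E_j; \Sigma_0)$ depends only on $\overline{\rho} = E_1[p] \simeq E_2[p]$: at good-reduction primes and at multiplicative primes dividing $\overline{N}$, the local factor is determined by $\overline{\rho}|_{G_{K_v}}$, with the $p=3$ additive-reduction exceptions being exactly what Definition \ref{def:Sigma} captures. Hypothesis (i), together with the isomorphism $E_1[p] \simeq E_2[p]$, further forces $E_2(K)[p] = 0$, trivialising the global $H^0$-factor for both curves. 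Combining these gives $\chi(K_\cyc/K, E_2; \Sigma_0) = 1$, and applying Theorem \ref{FT-cyc} in reverse to $E_2$ (noting $\Sigma_3(E_1) = \Sigma_3(E_2)$ since $m$ is coprime to $N_1 N_2$, and that $|L_v|_p$ at good-reduction primes depends only on $\overline{\rho}$) yields $\chi(F_\infty/K, E_2; \Sigma_0) = 1$.

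Third, I would force $\Sigma_0 = \emptyset$ by comparing $\chi(F_\infty/K, E_j; \Sigma_0)$ to $\chi(F_\infty/K, E_j)$: passing from the imprimitive to the primitive Euler characteristic multiplies by a $p$-adic integer correction factor at each $v \in \Sigma_0$, which is a $p$-adic unit precisely when $v \notin \Sigma(E_j)$. Since $\Sigma(E_2) \subset \Sigma_0$, any $v \in \Sigma(E_2)$ would contribute a non-unit, yet both $\chi(F_\infty/K, E_2; \Sigma_0) = 1$ (a unit) and $\chi(F_\infty/K, E_2)$ (a $p$-adic integer) leave no room for that, forcing $\Sigma(E_2) = \emptyset$. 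A symmetric analysis on $E_1$, using $\Sigma_0 \subset \Sigma(E_1) \cup \Sigma(E_2) = \Sigma(E_1)$, forces $\Sigma_0 \cap \Sigma(E_1) = \emptyset$, hence $\Sigma_0 = \emptyset$; then $\chi(F_\infty/K, E_1) = \chi(F_\infty/K, E_2) = 1$ as required. The hard part is the second step: establishing the prime-by-prime congruence invariance of the cyclotomic $\Sigma_0$-imprimitive Euler characteristic, which demands a careful case analysis of reduction types to verify that, once primes in $\Sigma_0$ have been removed, each remaining local Euler factor is computable from $\overline{\rho}|_{G_{K_v}}$ alone, the additive-reduction primes at $3$ (when $p=3$) being exactly why Definition \ref{def:Sigma} takes its particular form.
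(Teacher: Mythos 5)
The paper does not prove Theorem \ref{cong:FT}: it is imported verbatim from \cite[Thm 3.4]{SS14}, and the paper uses it (together with Theorem \ref{FT-cyc}) as a black box in the proof of Theorem \ref{cong:FT2}. So there is no ``paper's own proof'' to compare against; what you have written is an attempt to reconstruct the argument of \cite{SS14}.

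Your three-stage outline --- reduce the False-Tate Euler characteristic to the cyclotomic one via Theorem \ref{FT-cyc}, transfer the vanishing to $E_2$ by a Greenberg--Vatsal-type congruence comparison of cyclotomic $\Sigma_0$-imprimitive Euler characteristics, then use $p$-adic integrality of all local correction factors to squeeze $\Sigma_0 = \emptyset$ --- is consistent with the structure of the argument in \cite{SS14} and with how the paper uses the result. Two points deserve correction, though. First, your appeal to Remark \ref{L-integrality} mis-states which quantity is integral: it is $L_v^{-1}(E,1)$, not $L_v(E,1)$, that is a $p$-adic integer; consequently each factor $|L_v(E,1)|_p$ is a \emph{nonnegative} power of $p$ (i.e., $\geq 1$), and this is precisely what you need to conclude that a product equal to $1$ forces every factor to be $1$. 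The statement as you wrote it would push the inequality the wrong way. Second, in your third step you treat $\chi(F_\infty/K, E_2)$ as given, but its existence is not automatic. In the paper's own use of this theorem (proof of Theorem \ref{cong:FT2}) the existence is obtained by first descending to the cyclotomic layer, deducing $\mu(X(\Kcyc, E_1[p^\infty])) = 0$ and hence $\mu(X(\Kcyc, E_2[p^\infty])) = 0$ by \cite{GV00}, and only then invoking \cite[Prop A.9]{DDCS07}; your sketch needs an analogous intermediate step before the final integrality comparison for $E_2$ is legitimate. Finally, as you acknowledge, the second step --- the prime-by-prime verification that removing $\Sigma_0$ leaves every remaining local Euler factor determined by $\overline{\rho}|_{G_{K_v}}$ alone, with the $p=3$ additive-reduction exceptions explaining the shape of Definition \ref{def:Sigma} --- is the technical heart of \cite[Thm 3.4]{SS14} and would need to be carried out in full; the outline is plausible but not a proof of that step.
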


We remark that Hypothesis \ref{A} is equivalent to imposing that $\Sigma(E_2) = \emptyset$. The following theorem is obtained by taking $\Sigma_0 = \emptyset$ in the hypotheses of Theorem \ref{cong:FT}.

\begin{thm} \label{cong:FT2}
	Let $E_1/\Q, E_2/\Q$ be elliptic curves such that $E_1[p] \simeq E_2[p]$ as $G_\Q$-modules. Suppose that $F_\infty = K_\cyc(m^{1/p^\infty})$ is a False Tate extension where $m$ is coprime with $pN_1N_2$ where $N_1$ and $N_2$ are respectively the condutors of $E_1$ and $E_2$. Assume that Hypothesis \ref{A} holds for $E_2$, i.e. $\Sigma(E_2) = \emptyset$. In particular, this means:
	\begin{enumerate}[(i)]
		\item $v \mid \overline{N}$ for all $v$ where $E_2$ has split multiplication reduction.
		\item In addition, for $p = 3$, $v \nmid \overline{N}$ or $c_v \in \Z_3^{\times}$ for all $v \mid 3$ where $E_2$ has additive reduction.
	\end{enumerate} 
	Further assume that $E_1(K)[p] = 0$ and that $E_1$ is regular over $F_\infty$, i.e. $X(F_\infty, E_1[p^\infty]) = 0$. Then one also has $X(F_\infty, E_2[p^\infty]) = 0$.
\end{thm}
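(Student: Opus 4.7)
The strategy is to reduce the theorem to Theorem \ref{cong:FT} with the admissible choice $\Sigma_0 = \emptyset$, and then convert the resulting Euler-characteristic identity $\chi(F_\infty/K, E_2) = 1$ into the desired vanishing via the regularity criterion of \cite[Proposition A.9]{DDCS07}.

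First I would verify the hypotheses of Theorem \ref{cong:FT} with $\Sigma_0 = \emptyset$. The assumption $E_1(K)[p] = 0$ is given directly. The sandwich condition $\Sigma(E_2) \subset \Sigma_0 \subset \Sigma(E_1) \cup \Sigma(E_2)$ reduces to $\emptyset \subset \emptyset \subset \Sigma(E_1)$, which holds because Hypothesis \ref{A} is precisely the reformulation $\Sigma(E_2) = \emptyset$ noted immediately after Definition \ref{def:Sigma}. Finally, since $X(F_\infty, E_1[p^\infty]) = 0$ by regularity of $E_1$, every Galois cohomology group attached to this module vanishes, so $\chi(F_\infty/K, E_1; \emptyset) = \chi(F_\infty/K, E_1) = 1$. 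Theorem \ref{cong:FT} then yields $\chi(F_\infty/K, E_2) = 1$.

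The main obstacle is the last step: concluding $X(F_\infty, E_2[p^\infty]) = 0$ from this Euler-characteristic equality. Proposition A.9 of \cite{DDCS07} supplies exactly this implication, but only under the auxiliary hypothesis $\mu(X(E_2/K_\cyc)) = 0$. To secure it, I would apply the control theorem relating Selmer groups over $K_\cyc$ and $F_\infty$ (cf.\ \cite[Proposition 4.1]{KMR23}): the restriction map $\Sel(K_\cyc, E_1[p^\infty]) \to \Sel(F_\infty, E_1[p^\infty])^H$ has finite kernel and cokernel, so since the right-hand side is $0$ by regularity of $E_1$, the module $X(E_1/K_\cyc)$ is finite and in particular $\mu(X(E_1/K_\cyc)) = 0$. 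The Greenberg-Vatsal congruence invariance \cite{GV00} over the cyclotomic $\Z_p$-extension $K_\cyc = \Q(\mu_{p^\infty})$ (applicable since $E_1$ and $E_2$ are defined over $\Q$) then transfers this vanishing to $E_2$, and Proposition A.9 of \cite{DDCS07} completes the proof.

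The delicate points to verify along the way are that the hypotheses required for the control theorem of \cite{KMR23} all hold in our setting -- in particular that hypothesis \nameref{van} is vacuous for $E_1$ over $K_\cyc/K$ because no finite prime of $K = \Q(\mu_p)$ is infinitely split in $K_\cyc = \Q(\mu_{p^\infty})$ -- and that Hypothesis \ref{A} indeed matches $\Sigma(E_2) = \emptyset$ exactly, so that $\Sigma_0 = \emptyset$ satisfies condition (ii) of Theorem \ref{cong:FT}. Both are routine observations that make the above chain of implications go through cleanly.
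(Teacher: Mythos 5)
Your overall skeleton matches the paper's: reduce to Theorem~\ref{cong:FT} with $\Sigma_0 = \emptyset$ (using that Hypothesis~\ref{A} is exactly $\Sigma(E_2) = \emptyset$), obtain $\chi(F_\infty/K, E_2) = 1$, and upgrade this to $X(F_\infty, E_2[p^\infty]) = 0$ via \cite[Prop.~A.9]{DDCS07}, after securing $\mu(X(K_\cyc, E_2[p^\infty])) = 0$ through Greenberg--Vatsal \cite{GV00}. The genuine difference is in how $\mu(X(K_\cyc, E_1[p^\infty])) = 0$ is established. The paper descends from $\chi(F_\infty/K, E_1) = 1$ to $\chi(K_\cyc/K, E_1) = 1$ by plugging into the formula of Theorem~\ref{FT-cyc}: each $|L_v(E_1,1)|_p$ is a nonnegative power of $p$ (Remark~\ref{L-integrality}), and $\chi(K_\cyc/K, E_1)$ is $p$-integral by \cite[Lemma~4.2]{Gre99}, so the only way their product can equal $1$ is if both factors equal $1$. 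That yields the stronger conclusion $X(K_\cyc, E_1[p^\infty]) = 0$, not merely finiteness. You instead argue by control: $\Sel(K_\cyc, E_1[p^\infty]) \to \Sel(F_\infty, E_1[p^\infty])^H$ has finite kernel, so vanishing of the target forces $X(K_\cyc, E_1[p^\infty])$ finite and hence $\mu = 0$. This is also valid in substance, but your cited source is off: \cite[Prop.~4.1]{KMR23} treats $\Z_p^2$-extensions, whereas $F_\infty/K$ here is the nonabelian False-Tate extension $\Z_p \rtimes \Z_p$. The appropriate control statement lives in the False-Tate literature (e.g.\ Hachimori--Venjakob \cite{HV03} or the appendix of \cite{DDCS07}); you would also want to note that $E_1(F_\infty)[p^\infty] = 0$ (which follows from $E_1(K)[p] = 0$ and $F_\infty/K$ being pro-$p$) so that the inflation--restriction term vanishes. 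What the paper's route buys is the slightly cleaner statement $X(K_\cyc, E_1[p^\infty]) = 0$ at no extra cost, whereas your route requires invoking and verifying a control theorem for a nonabelian extension; both are fine, but you should replace the \cite{KMR23} citation with a reference genuinely applicable to the False-Tate setting.
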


\begin{proof}
	It is clear that $X(F_\infty, E_1[p^\infty]) = 0$ implies $\chi(F_\infty/K, E_1) = 1$. We claim that $\chi(F_\infty/K, E_1) = 1$ implies  $\chi (\Kcyc/K, E_1) = 1$. Indeed, by Remark \ref{L-integrality}, $|L_v(E, 1)|_p$ must be a non-negative power of $p$. Because $\chi(\Kcyc/K, E_1)$ is $p$-adically integral (\cite[Lemma 4.2]{Gre99}), it follows from Theorem $\ref{FT-cyc}$ that $\chi(\Kcyc/K, E_1) = 1$. This implies that $\Sigma_3(E_1) = \emptyset$ and  $X(\Kcyc, E_1[p^\infty]) = 0$. In particular, $\mu(X(\Kcyc, E_1[p^\infty])) = 0$. By the work of Greenberg-Vatsal \cite{GV00}, we also have $\mu(X(\Kcyc, E_2[p^\infty])) = 0$ and thus there is a equivalence $X(F_\infty, E_2[p^\infty]) = 0 \Longleftrightarrow \chi(F_\infty/K, E_2) = 1$, following \cite[Prop A.9]{DDCS07}. 
	
	Because the hypotheses imply that $\Sigma(E_2) = \emptyset$, one may take $\Sigma_0 = \emptyset$ in Theorem \ref{cong:FT} to conclude that $\chi(F_\infty/K, E_2) = 1$, or equivalently $X(F_\infty, E_2[p^\infty]) = 0.$
\end{proof}

The following congruent elliptic curves satisfy the hypotheses of Theorem \ref{cong:FT2}, and are also examined in \cite{SS14}. They will also be used as the main example for some other results to be mentioned in this paper.

\begin{ex}
	\label{cong:ex}
	For $p = 3$, consider the pair of $p$-congruent elliptic curves with Cremona labels $E_1 = 11A3$ \cite[\href{https://www.lmfdb.org/EllipticCurve/Q/11.a3}{Elliptic Curve 11.a3}]{lmfdb} and $E_2 = 77C1$ \cite[\href{https://www.lmfdb.org/EllipticCurve/Q/77.c2}{Elliptic Curve 77.c2}]{lmfdb}. They both satisfy the hypothesis in Theorem \ref{cong:FT2} because  $E_2$ has split multiplicative reduction at $11$, non-split multiplicative reduction at $7$ and $11 \mid \overline{N}$. Indeed, since $E_1$ is a Tate curve over $\Q_{11}$, one has the following isomorphism of $\Gal(\overline{\Q_{11}}/ \Q_{11})$-modules:
	
	$$\overline{\Q_{11}}^\times / q_{\text{Tate}}^{\Z} \longrightarrow E_1(\overline{\Q_{11}})$$ 
	
	Hence $E_1(\overline{\Q_{11}})[3] \simeq (\overline{\Q_{11}}^\times/q_{\Tate})[3] = \{\alpha \in \overline{\Q_{11}}^\times \mid \alpha^3 = q_{\text{Tate}}^k \text{ for some } k \in \Z\}$ as Galois modules. But one can verify computationally that $3 \nmid v_{11}(q_{\text{Tate}})$. Hence the representation attached to $E_1[3]$ is ramified at $11$. Moreover, neither $E_1$ nor $E_2$ has primes of additive reduction above $3$.

	Since $X(F_\infty, E_1[p^\infty]) = 0$ by the computations in \cite[Cor A.11, Table 3-11A3]{DDCS07}, one also has $X(F_{\infty}, E_2[p^\infty]) = 0$ by Theorem \ref{cong:FT2} above. One can check that this is consistent with \cite[Cor A.11, Table 3-77C1]{DDCS07}.	
\end{ex}

\begin{rmk}
	One can generalize the example above with any mod-$p$ congruent curves $E_1$ and $E_2$ where:
	\begin{itemize}
		\item $E_1$ only admits bad reduction at a split-multiplicative prime.
		\item $E_1(K)[p] = 0$.
		\item $E_2$ admits split multiplicative reduction only at primes dividing $\overline{N}$.
		\item If $p = 3$, then we additionally require $v \nmid \overline{N}$ or $c_v \in \Z_3^\times$ for all $v$ where $E_2$ has additive reduction.
	\end{itemize} 
\end{rmk}

The following result follows from the proof of \cite[Theorem 3.4]{SS14}.

\begin{thm} \label{cong:cyc}
	Let $E_1/\Q, E_2/\Q$ be elliptic curves such that $E_1[p] \simeq E_2[p]$ as $G_\Q$-modules. Suppose that:
	\begin{enumerate}[(i)]
		\item $E_1(K)[p] = 0$.
		\item $\Sigma(E_2) \subset \Sigma_0 \subset \Sigma(E_1) \cup \Sigma(E_2)$.
		\item $\chi(K_\cyc/K, E_1; \Sigma_0) = 1$, or equivalently $X^{\Sigma_0}(K_\cyc, E_1[p^\infty]) = 0$.
	\end{enumerate}
	Then we also have $\chi(K_\cyc/K, E_2; \Sigma_0) = 1$, or equivalently $X^{\Sigma_0}(K_\cyc/K, E_2[p^\infty]) = 0.$
\end{thm}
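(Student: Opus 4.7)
The plan is to extract the cyclotomic portion of the argument used in the proof of \cite[Theorem 3.4]{SS14}, bypassing its False-Tate step that relies on Theorem \ref{FT-cyc}. Since hypothesis (iii) already places us at the cyclotomic level, the task reduces to running a Greenberg-Vatsal-type comparison for the $\Sigma_0$-imprimitive Selmer groups over $K_\cyc$, then upgrading vanishing of the Iwasawa invariants to vanishing of the dual itself.

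First I would note that hypothesis (iii) gives $\mu\bigl(X^{\Sigma_0}(K_\cyc, E_1[p^\infty])\bigr) = \lambda\bigl(X^{\Sigma_0}(K_\cyc, E_1[p^\infty])\bigr) = 0$, and that the congruence $E_1[p] \simeq E_2[p]$ combined with (i) gives $E_2(K)[p] = 0$ as well. Next I would carry out the Greenberg-Vatsal comparison of residual imprimitive Selmer groups: the condition $\Sigma(E_2) \subseteq \Sigma_0 \subseteq \Sigma(E_1) \cup \Sigma(E_2)$ is designed precisely so that at every prime $v \in S \setminus \Sigma_0$ still contributing to the local defining conditions, the mod-$p$ local images for $E_1$ and $E_2$ are identified via the fixed isomorphism $E_1[p] \simeq E_2[p]$. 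This is the cyclotomic core of \cite[Section 3]{SS14}, and it yields $\mu\bigl(X^{\Sigma_0}(K_\cyc, E_2[p^\infty])\bigr) = 0$ together with equality of the $\lambda$-invariants of the two imprimitive duals; hence $\lambda\bigl(X^{\Sigma_0}(K_\cyc, E_2[p^\infty])\bigr) = 0$.

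To conclude, I would invoke an imprimitive analogue of \cite[Proposition 4.14]{Gre99}, applicable because $E_2(K)[p] = 0$ forces $E_2(K_\cyc)[p^\infty]$ to be finite, to see that $X^{\Sigma_0}(K_\cyc, E_2[p^\infty])$ contains no nontrivial finite $\Lambda(G_\cyc)$-submodule. Combined with the vanishing of both Iwasawa invariants, this forces $X^{\Sigma_0}(K_\cyc, E_2[p^\infty]) = 0$, equivalently $\chi(K_\cyc/K, E_2; \Sigma_0) = 1$. The hardest part will be verifying the Greenberg-Vatsal step in the imprimitive setting: one must check case-by-case (split multiplicative, non-split multiplicative, additive, and good reduction away from $p$) that the hypotheses on $\Sigma_0$ genuinely force matching residual local conditions at every $v \in S \setminus \Sigma_0$, with extra care for the $p = 3$ additive-reduction case --- precisely the situation that motivates the enlarged definition of $\Sigma(E_j)$ in Definition \ref{def:Sigma}.
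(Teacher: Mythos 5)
The paper's "proof" of this theorem is the single sentence that it "follows from the proof of \cite[Theorem 3.4]{SS14}," so there is no detailed argument in the paper to compare against directly. Your proposal is a good-faith reconstruction of the cyclotomic portion of that proof, and your ingredients (residual invariance of $\mu$, imprimitive $\lambda$-comparison, no-finite-submodule) are exactly the right tools. However, there is one step where your outline glosses over a genuine difficulty, and it is precisely where SS14 do something more subtle than a straight Greenberg--Vatsal comparison.

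The gap is in the $\lambda$-comparison. The standard Greenberg--Vatsal result (e.g.\ \cite[Corollary 2.3]{GV00}) that gives $\lambda^{\Sigma_0}(E_1) = \lambda^{\Sigma_0}(E_2)$ requires $\Sigma_0$ to contain the problematic primes of \emph{both} curves, so that the residual imprimitive Selmer groups $R^{\Sigma_0}(K_\cyc,E_1[p])$ and $R^{\Sigma_0}(K_\cyc,E_2[p])$ are literally identified through $E_1[p]\simeq E_2[p]$. But hypothesis (ii) is \emph{asymmetric}: it forces $\Sigma(E_2)\subset\Sigma_0$ but a priori allows primes of $\Sigma(E_1)\setminus\Sigma_0$ to survive in $S\setminus\Sigma_0$. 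At such a prime $v$, $E_1$ has split multiplicative reduction with $\bar\rho$ unramified, so the kernel of $H^1(K_{\cyc,w},E_1[p])\to H^1(K_{\cyc,w},E_1[p^\infty])[p]$ is nontrivial and the local contribution to the passage from $R^{\Sigma_0}$ to $\Sel^{\Sigma_0}[p]$ for $E_1$ need not match that for $E_2$, whose reduction type at $v$ can differ. Your assertion that the mod-$p$ local images "are identified" at every $v\in S\setminus\Sigma_0$ is exactly what is not automatic for those $v\in\Sigma(E_1)\setminus\Sigma_0$. What \cite{SS14} actually exploit is the Euler-characteristic formula together with $p$-integrality: writing $\chi(K_\cyc/K,E_1;\Sigma_0)$ as a product over local terms, the factors coming from $\Sigma(E_1)\setminus\Sigma_0$ are non-negative powers of $p$ (cf.\ Remark \ref{L-integrality}), so the hypothesis $\chi(K_\cyc/K,E_1;\Sigma_0)=1$ \emph{forces} those contributions to vanish, after which the comparison becomes effectively symmetric (this is why Theorem \ref{cong:FT} concludes $\Sigma_0=\emptyset$ as part of the output). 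Your outline should either add this $p$-integrality step before running the $\lambda$-comparison, or reformulate the comparison directly at the level of Euler characteristics as in \cite{SS14}; as written, the claim of $\lambda^{\Sigma_0}$-equality under the asymmetric hypothesis (ii) is not justified.
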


We remark that mposing Hypothesis \ref{A} is the same as imposing that $\Sigma_0 = \emptyset$ in Theorem \ref{cong:cyc}. In particular, one has

\begin{thm} \label{cong:cyc2}
	Let $E_1/\Q, E_2/\Q$ be elliptic curves such that $E_1[p] \simeq E_2[p]$ as $G_\Q$-modules, and assume that Hypothesis \ref{A} holds for $E_2$, i.e. $\Sigma(E_2) = \emptyset$. In particular, this means:
	\begin{enumerate}[(i)]
		\item $v \mid \overline{N}$ for all $v$ where $E_2$ has split multiplication reduction.
		\item In addition, for $p = 3$, $v \nmid \overline{N}$ or $c_v \in \Z_3^{\times}$ for all $v \mid 3$ where $E_2$ has additive reduction.
	\end{enumerate} 
	Suppose that $E_1(K)[p] = 0$ and $X(K_\cyc, E_1[p^\infty]) = 0$. Then one also has $X(K_\cyc, E_2[p^\infty]) = 0$.
\end{thm}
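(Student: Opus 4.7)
The plan is to deduce Theorem \ref{cong:cyc2} directly from Theorem \ref{cong:cyc} by specializing the imprimitive statement to the empty set of auxiliary primes $\Sigma_0 = \emptyset$.

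First, I would unwind Definition \ref{def:Sigma} to see that Hypothesis \ref{A} on $E_2$ is nothing but the statement $\Sigma(E_2) = \emptyset$. The set $\Sigma(E_2)$ consists of the primes $v$ of split multiplicative reduction of $E_2$ with $v \nmid \overline{N}$, together with (when $p=3$) the primes above $3$ where $E_2$ has additive reduction and whose local Tamagawa number $c_v$ is not a $3$-adic unit. The two clauses of Hypothesis \ref{A} say exactly that neither of these two collections contributes, so $\Sigma(E_2) = \emptyset$.

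Next, with $\Sigma(E_2) = \emptyset$ in hand, the nesting condition (ii) of Theorem \ref{cong:cyc}, namely $\Sigma(E_2) \subset \Sigma_0 \subset \Sigma(E_1) \cup \Sigma(E_2)$, is satisfied by the choice $\Sigma_0 = \emptyset$. For this choice, the $\Sigma_0$-imprimitive Selmer group coincides with the classical Selmer group of Definition \ref{def:Sel}, as explicitly noted in the paragraph introducing $\Sel^{\Sigma_0}$; consequently $X^{\emptyset}(K_\cyc, E_i[p^\infty]) = X(K_\cyc, E_i[p^\infty])$ for $i=1,2$. The assumption $X(K_\cyc, E_1[p^\infty]) = 0$ therefore rephrases as $\chi(K_\cyc/K, E_1; \emptyset) = 1$, verifying hypothesis (iii). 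Combined with $E_1(K)[p]=0$, which is condition (i), all three hypotheses of Theorem \ref{cong:cyc} are met with $\Sigma_0 = \emptyset$.

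Applying Theorem \ref{cong:cyc} immediately yields $\chi(K_\cyc/K, E_2; \emptyset) = 1$, equivalently $X(K_\cyc, E_2[p^\infty]) = 0$. Since the statement is essentially the $\Sigma_0 = \emptyset$ specialization of the previous theorem, there is no substantive obstacle; the only verification needed is the translation of Hypothesis \ref{A} into the vanishing of $\Sigma(E_2)$, which is a direct check against Definition \ref{def:Sigma}.
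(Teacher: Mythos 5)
Your proof is correct and follows precisely the same route the paper takes: the paper itself states (immediately before Theorem \ref{cong:cyc2}) that Hypothesis \ref{A} amounts to $\Sigma(E_2) = \emptyset$, and then obtains Theorem \ref{cong:cyc2} as the $\Sigma_0 = \emptyset$ specialization of Theorem \ref{cong:cyc}. You have simply spelled out that specialization in detail, including the verification that $\Sigma_0 = \emptyset$ satisfies the nesting condition and recovers the classical Selmer group.
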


Theorem \ref{cong:cyc2} applies to Example \ref{cong:ex} where $p = 3$: $E_1(K)[p] = 0$ because $K/\Q$ is an imaginary quadratic extension, and $\Q(E_1[p])/\Q$ is a $GL_2(\Z/p\Z)$-extension. Moreover, $X(\Kcyc, E_1[p^\infty]) = 0$ because of the computations in \cite[Table3-11A3]{DDCS07} as well as the argument in the proof of \cite[Corollary A.11]{DDCS07}.

	%



\subsection{The rank $1$ case} \label{sec:rk-1}

Keeping the same notations as the previous subsection, we study the False-Tate extension $F_\infty = \cup_{n \geq 0} F_n$ where $F_n = \Q(\mu_{p^n}, m^{1/p^n})$. Let $L_\infty = \cup_{n \geq 0} L_n$ where $L_n = \Q(x_n)$ for some compatible choices of $x_n = m^{1/p^n}$, i.e. $x_{n + 1}^{p} = x_n$. As before, let $H = \Gal(F_\infty/K_\cyc)$, which is isomorphic to $\Z_p$.

For an elliptic curve $E/\Q$, recall from the previous section that $\Sigma_3(E)$ is the set of primes $v$ in $K$ such that $v \mid  m, v \nmid p$, $E$ has good ordinary reduction at $v$ and $p \mid \# \tilde{E}_q(k_v)$. 

First of all, we give the following simplification of \cite[Proposition 4.7]{CFKS10} regarding the $\Lambda(H)$-rank of $X(F_\infty, E[p^\infty])$. We also remark that a more precise statement of \cite[Proposition 4.7]{CFKS10} can be found in \cite[Theorem A.32, 3]{DDCS07}, where there is an explicit formula for the $\Lambda(H)$-rank of $X(F_\infty, E[p^\infty])$, and an exhaustive description of cases where the $\Lambda(H)$-rank is exactly $1$.

\begin{thm} \label{FT-rk}
	Assume that $E$ has good ordinary reduction at $p$ and that $\mu(X(K_\cyc, E[p^\infty])) = 0$. Then a necessary condition for $X(F_\infty, E[p^\infty])$ to have $\Lambda(H)$-rank $1$ is that $\Sigma_3(E) = \emptyset$. This condition is also sufficient if we additionally assume that $\lambda(E) = 0$ and, among the primes dividing $m$, $E$ has split multplicative reduction at a unique prime $q$ where $q$ is inert in $K$.
\end{thm}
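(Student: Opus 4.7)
The plan is to deduce the statement directly from the explicit rank formula of \cite[Proposition 4.7]{CFKS10}, whose sharper form is given in \cite[Theorem A.32(3)]{DDCS07}. Under our standing assumptions that $E$ has good ordinary reduction at $p$ and $\mu(X(K_\cyc, E[p^\infty])) = 0$, that formula yields a decomposition of the form
\[\rk_{\Lambda(H)} X(F_\infty, E[p^\infty]) = \lambda(E) + \sum_{v \mid m} \epsilon_v(E),\]
where each $\epsilon_v(E)$ is a non-negative integer determined by the reduction type of $E$ at $v$ together with the splitting behavior of $v$ in $K/\Q$. The key numerical data to extract from the case analysis in \cite[Appendix A]{DDCS07} are: $\epsilon_v(E) = 0$ whenever $E$ has good reduction at $v$ with $p \nmid \#\tilde{E}_v(k_v)$; $\epsilon_v(E) \geq 2$ whenever $v \in \Sigma_3(E)$; and for a prime $v \mid m$ at which $E$ has split multiplicative reduction, $\epsilon_v(E) = 1$ precisely when $v$ is inert in $K$.

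For the necessity direction, observe that if $\Sigma_3(E) \neq \emptyset$, then the formula already forces $\rk_{\Lambda(H)} X(F_\infty, E[p^\infty]) \geq 2$, contradicting the assumption of rank $1$. Hence rank $1$ requires $\Sigma_3(E) = \emptyset$, as claimed.

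For the sufficiency direction, the assumption $\lambda(E) = 0$ eliminates the cyclotomic term, $\Sigma_3(E) = \emptyset$ removes all good-reduction local contributions at primes dividing $m$, and by hypothesis the only prime $v \mid m$ at which $E$ has split multiplicative reduction is the given prime $q$, which is inert in $K$. The sum therefore collapses to $\epsilon_q(E) = 1$, giving the total rank $1$.

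The principal obstacle is identifying the precise values of $\epsilon_v(E)$ from the somewhat elaborate tabulation in \cite[Appendix A]{DDCS07}: one must read off which combinations of reduction type and splitting behavior give which local contribution. Once this dictionary is in hand, both implications reduce to straightforward arithmetic on the displayed formula.
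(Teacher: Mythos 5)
The paper offers no proof of this theorem: it is presented purely as a ``simplification'' of \cite[Proposition 4.7]{CFKS10}, with a pointer to the explicit rank formula in \cite[Theorem A.32(3)]{DDCS07}, and the reader is left to extract the conclusion from those references. Your proposal therefore follows exactly the same route the paper implicitly takes, with the welcome addition of spelling out the shape of the formula and the local dictionary one needs.

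One subtlety you should be careful about, though, concerns where the sum in your displayed formula lives. The rank formula in \cite{DDCS07,CFKS10} is expressed as $\lambda(E)$ plus a count of primes of $K_\cyc=\Q(\mu_{p^\infty})$ lying above $m$, weighted by reduction type (split multiplicative primes of $K_\cyc$ contribute $1$ each, good-reduction primes of $K_\cyc$ with $p$-torsion in the reduced curve contribute $2$ each). Your $\epsilon_v$ must therefore absorb the number $g_v$ of primes of $K_\cyc$ above $v$. For the necessity direction this is harmless, since $v\in\Sigma_3(E)$ still forces a contribution of at least $2$. But for sufficiency, your claim that $\epsilon_q(E)=1$ ``precisely when $q$ is inert in $K$'' is not quite what is needed: you need $q$ to have a \emph{unique} prime above it in $K_\cyc$, which is equivalent to $q$ being inert in $K$ together with a Wieferich-type nondegeneracy (that $q^{p-1}\not\equiv 1\bmod p^2$, so that the Frobenius of $q$ topologically generates $\Gal(K_\cyc/K)\simeq 1+p\Z_p$). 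If $q$ is inert in $K$ but splits into $g_v>1$ primes in $K_\cyc$, the local contribution is $g_v>1$ and the rank is not $1$. This ambiguity is inherited from the paper's own statement (which also says only ``inert in $K$''), so your proposal faithfully reproduces the paper's reading; but a careful write-up should either strengthen the hypothesis on $q$ or observe that the relevant condition is ``inert in $K_\cyc$.''

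Finally, your dictionary should also note, at least in passing, that primes $v\mid m$ of non-split multiplicative or additive reduction contribute $0$: for non-split multiplicative reduction the quadratic twist does not trivialize over the pro-$p$ extension $K_{\cyc,w}/K_v$, and for additive reduction the local term again vanishes. Without that observation the sufficiency argument does not yet ``collapse'' to $\epsilon_q$ alone, since $m$ is allowed to have other prime divisors.
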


In our setting, one can formulate the $\frakM_H(G)$ conjecture (\cite[p.21]{CFKS10}), namely that \[X(F_\infty, E[p^\infty])/X(F_\infty, E[p^\infty])[p^\infty]\] is finitely generated over $\Lambda(H)$ as a $\Lambda(G)$-module, where $G = \Gal(F_\infty/K)$. In particular, this condition is satisfied whenever the $\mu$-invariant of $E$ vanishes over $K^{cyc}$, and we are content with this assumption.

We now study the following result due to Coates {\it et al.} \cite{CFKS10} over the False-Tate extension:

\begin{thm} \label{thm:Coates} \cite[Theorem 4.8]{CFKS10}
	Assume that $E/\Q$ is an elliptic curve which is ordinary at $p$ and that $X(K^{\cyc}, E[p^\infty])$ has trivial $\mu$-invariant. Further suppose that $X(F_\infty, E[p^\infty])$ has $\Lambda(H)$-rank $1$. Then for all $n \geq 1$, we have
	\[\rk_{\Z_p}{X(L_n, E_i[p^\infty])} = n + \rk_{\Z_p} X(\Q, E_i[p^\infty]),\]
	\[\rk_{\Z_p}{X(F_n, E_i[p^\infty])} = p^n - 1 + \rk_{\Z_p} X(K, E_i[p^\infty]).\]
\end{thm}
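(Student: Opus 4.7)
The plan is to combine a Mazur-style control theorem for Selmer groups with the structure theorem for finitely generated $\Lambda(H)$-modules, exploiting the hypothesis that $X_\infty := X(F_\infty, E[p^\infty])$ has $\Lambda(H)$-rank one. Throughout, $G = \Gal(F_\infty/K) \simeq H \rtimes \Gamma$ with $\Gamma = \Gal(K_\cyc/K)$ acting on $H$ via the cyclotomic character.

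The first step is to propagate the vanishing of the $\mu$-invariant. The hypothesis $\mu(X(K_\cyc, E[p^\infty])) = 0$, together with a control-theorem argument along the intermediate $\Z_p$-extension $F_\infty/K_\cyc$ (valid because of the ordinary reduction hypothesis), forces $\mu_H(X_\infty) = 0$. Combined with the rank hypothesis, the structure theorem for $\Lambda(H) \cong \Z_p\llbracket T \rrbracket$-modules produces a pseudo-isomorphism
\[X_\infty \sim_{\Lambda(H)} \Lambda(H) \oplus T,\]
where $T$ is a $\Lambda(H)$-torsion module with $\mu_H(T) = 0$, and hence is finitely generated over $\Z_p$.

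The second step is to apply a Mazur-type control theorem at each finite layer: for $L \in \{L_n, F_n\}$ the restriction map $\Sel(L, E[p^\infty]) \to \Sel(F_\infty, E[p^\infty])^{\Gal(F_\infty/L)}$ has finite kernel and cokernel, yielding
\[\rk_{\Z_p} X(L, E[p^\infty]) = \rk_{\Z_p}(X_\infty)_{\Gal(F_\infty/L)}.\]
One identifies $\Gal(F_\infty/F_n) \simeq H^{p^n} \rtimes \Gamma^{p^{n-1}}$ and $\Gal(F_\infty/L_n K) \simeq H^{p^n} \rtimes \Gamma$; the degree $p-1$ extension $L_n K / L_n$ is coprime to $p$ and so preserves $\Z_p$-ranks. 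Using the pseudo-isomorphism, the $H^{p^n}$-coinvariants of $\Lambda(H)$ become the group ring $\Z_p[\Z/p^n\Z]$, a free $\Z_p$-module of rank $p^n$ on which $\Gamma$ acts by $\bar{\tau} \mapsto \bar{\tau}^{\chi_{\cyc}(\sigma)}$. Taking further coinvariants by $\Gamma^{p^{n-1}}$ for the $F_n$-case and by the full $\Gamma$ for the $L_n$-case, orbit analysis of the twisted cyclotomic action produces the leading terms $p^n - 1$ and $n$ respectively, while the torsion part $T$ contributes bounded correction terms which, via the control theorem at $n = 0$, identify with the base-field ranks $\rk_{\Z_p} X(K, E[p^\infty])$ and $\rk_{\Z_p} X(\Q, E[p^\infty])$.

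The hardest step is the precise orbit-counting under the cyclotomic-twisted $\Gamma$-action, together with tracking the boundary contributions arising from the finite kernel and cokernel in the control theorem. The disparity between the linear growth $n$ for $L_n$ and the exponential growth $p^n - 1$ for $F_n$ reflects how the cyclotomic direction interacts with the Kummer direction in each case: $\Gal(F_\infty/L_n)$ contains the full cyclotomic $\Gamma$, which collapses the exponentially large $\Z_p[\Z/p^n\Z]$ down to a module of linear size via orbits of the multiplicative $\chi_{\cyc}$-action, whereas $\Gal(F_\infty/F_n)$ contains only the open subgroup $\Gamma^{p^{n-1}}$ (which acts trivially on $\Z/p^n\Z$), preserving most of the exponential size up to a single unit-sized boundary correction.
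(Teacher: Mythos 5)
The paper offers no proof of this statement: it is quoted directly from \cite[Theorem 4.8]{CFKS10} (with the explicit case analysis available in \cite[Theorem A.38]{DDCS07}), so there is nothing in the present paper to compare your argument against. What you have written is therefore a blind reconstruction of that external theorem, and it contains a concrete error at the decisive step. Your skeleton (propagate $\mu=0$, structure theorem over $\Lambda(H)$, control theorem, coinvariant computation) is reasonable, but the orbit-counting does not produce the stated leading terms. In the $F_n$-case, $\Gamma^{p^{n-1}}$ acts on $H/p^nH \simeq \Z/p^n\Z$ through $\chi_{\cyc}$, and $\chi_{\cyc}(\Gamma^{p^{n-1}}) \subset 1 + p^n\Z_p$, so that action on $\Z/p^n\Z$ is \emph{trivial}; thus $\bigl(\Z_p[\Z/p^n\Z]\bigr)_{\Gamma^{p^{n-1}}}$ is the whole module, of $\Z_p$-rank $p^n$, not $p^n-1$. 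In the $L_n$-case the group that must act is all of $\Z_p^\times \simeq \Delta \times \Gamma$ (since $\Gal(F_\infty/L_n) = p^nH \rtimes \Z_p^\times$), whose orbits on $\Z/p^n\Z$ are the zero element together with the $n$ valuation strata, giving $n+1$ orbits, not $n$. So the sentence ``orbit analysis of the twisted cyclotomic action produces the leading terms $p^n-1$ and $n$'' asserts something the orbit analysis simply does not give. The ``$-1$'' is precisely the nontrivial content of \cite[Theorem 4.8]{CFKS10}, and your sketch does not identify its source.

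Two further gaps. The pseudo-isomorphism $X_\infty \sim_{\Lambda(H)} \Lambda(H) \oplus T$ is only a statement about $\Lambda(H)$-modules; it is not $\Gamma$-equivariant in general, so you cannot take $\Gamma$- or $\Gamma^{p^{n-1}}$-coinvariants of the two summands separately and add the results — the correct apparatus is the $\Lambda(G)$-Euler characteristic (Akashi series) calculus, which is what \cite{CFKS10} and \cite{DDCS07} actually use. And the claim that the degree-$(p-1)$ extension $L_nK/L_n$ ``preserves $\Z_p$-ranks'' is false: one has $X(L_n) \simeq X(L_nK)_\Delta$, whose $\Z_p$-rank is that of the trivial $\Delta$-eigenspace and is typically strictly smaller than $\rk_{\Z_p}X(L_nK)$. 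Finally, with leading contribution $p^n$ (resp.\ $n+1$) from the free part plus a nonnegative torsion contribution, your bookkeeping would force $\rk_{\Z_p} X(\Q, E[p^\infty]) \geq 1$ and $\rk_{\Z_p} X(K, E[p^\infty]) \geq 1$, neither of which is a hypothesis of the theorem.
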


In the following result, we prove that Theorem \ref{thm:Coates} holds in a family of congruent elliptic curves under certain hypotheses:

\begin{thm} \label{cong:growth}
	Assume that $E_1/\Q, E_2/\Q$ are elliptic curves with good ordinary reduction at $p$ such that $E_1[p] \simeq E_2[p]$. Suppose that $X(K_\cyc, E_1[p^\infty]) = 0$ and $E_1$ has split multiplicative reduction at a unique prime $q$ such that $q$ is inert in $K$. Suppose that the following hypotheses of Theorem \ref{cong:cyc2} also hold:
	\begin{enumerate}[(i)]
		\item $E_1(K)[p] = 0$.
		\item $v \mid \overline{N}$ for all $v$ where $E_2$ has split multiplication reduction.
		\item In addition, for $p = 3$, $v \nmid \overline{N}$ or $c_v \in \Z_3^{\times}$ for all $v \mid 3$ where $E_2$ has additive reduction.
	\end{enumerate}
	
	Then for both $i = 1$ and $i = 2$, we have $\mu(E_i) = \lambda(E_i) = 0$, $X(F_\infty, E_i[p^\infty])$ has $\Lambda(H)$-rank $1$ for $F_\infty = \Q(\mu_{p^\infty}, q^{1/p^\infty})$. Furthermore, for $n \geq 1$, one has the following:
	$$\rk_{\Z_p}{X(L_n, E_i[p^\infty])} = n + \rk_{\Z_p} X(\Q, E_i[p^\infty]),$$
	$$\rk_{\Z_p}{X(F_n, E_i[p^\infty])} = p^n - 1 + \rk_{\Z_p} X(K, E_i[p^\infty]).$$
\end{thm}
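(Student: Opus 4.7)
The plan is to handle $E_1$ and $E_2$ separately, reducing each to an application of Theorems \ref{FT-rk} (for the $\Lambda(H)$-rank-one conclusion) and \ref{thm:Coates} (for the growth formulas). The congruence is used only to transfer the vanishing of $X(K_\cyc, \cdot)$ from $E_1$ to $E_2$ via Theorem \ref{cong:cyc2}, and then to transport the local reduction type at $q$ via the mod-$p$ representation.

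I would first dispatch the case $i=1$. Since $X(K_\cyc, E_1[p^\infty]) = 0$, both $\mu(E_1)$ and $\lambda(E_1)$ vanish. The set $\Sigma_3(E_1)$ only sees primes of $K$ above $m = q$, and since $q$ is inert in $K$ there is exactly one such prime, at which $E_1$ has bad (split multiplicative) reduction, so $\Sigma_3(E_1) = \emptyset$. Together with the hypothesis that $E_1$ has split multiplicative reduction at a unique rational prime $q \mid m$ with $q$ inert in $K$, Theorem \ref{FT-rk} yields $\Lambda(H)$-rank $1$ for $X(F_\infty, E_1[p^\infty])$, and Theorem \ref{thm:Coates} then supplies the growth formulas for $E_1$.

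For $i=2$, I start by applying Theorem \ref{cong:cyc2}, whose hypotheses are exactly (i)--(iii) of the statement together with $X(K_\cyc, E_1[p^\infty]) = 0$, to obtain $X(K_\cyc, E_2[p^\infty]) = 0$, whence $\mu(E_2) = \lambda(E_2) = 0$. To reach Theorem \ref{FT-rk} for $E_2$, I must verify both that $\Sigma_3(E_2) = \emptyset$ and that $E_2$ has split multiplicative reduction at $q$. The key input is the local structure of $E_1[p]|_{G_{\Q_q}}$: as a Tate-curve representation, it is a nonsplit extension
\[ 0 \to \mu_p \to E_1[p]|_{G_{\Q_q}} \to \F_p \to 0 \]
with Frobenius eigenvalues $(q,1)$, and it is ramified at $q$ because $p \nmid v_q(q_{\mathrm{Tate},E_1})$ (equivalently $q \mid \overline{N}$). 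Transporting this via $E_1[p] \simeq E_2[p]$ rules out good reduction of $E_2$ at $q$ (the representation would be unramified), non-split multiplicative reduction at $q$ (twisting by the unramified quadratic character would force eigenvalue identities like $-q \equiv q \pmod{p}$, impossible for odd $p$), and additive reduction at $q$ (the image of inertia would not be the unipotent subgroup forced by the Tate parametrization). Hence $E_2$ is split multiplicative at $q$, which simultaneously gives $\Sigma_3(E_2) = \emptyset$ and supplies the split-multiplicative hypothesis of Theorem \ref{FT-rk}. The theorem then gives $\Lambda(H)$-rank $1$ for $X(F_\infty, E_2[p^\infty])$, and Theorem \ref{thm:Coates} concludes the growth formulas for $E_2$.

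The main obstacle lies in the local analysis at $q$: one must carefully justify that $E_1[p]|_{G_{\Q_q}}$ is ramified (immediate in Example \ref{cong:ex} but needing care in the general statement, where it should be read as an implicit regularity condition consistent with hypothesis (ii)), and then eliminate the non-split-multiplicative and additive cases for $E_2$ by a case-by-case comparison of local Galois representations. Once this reduction-type transfer is in hand, both $i=1$ and $i=2$ fall to the same pair of rank-growth theorems from \cite{DDCS07, CFKS10}.
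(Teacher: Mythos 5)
Your overall route matches the paper's: apply Theorem \ref{cong:cyc2} to transfer the vanishing of $X(\Kcyc,\cdot)$ from $E_1$ to $E_2$, then feed both curves into Theorem \ref{FT-rk} for the $\Lambda(H)$-rank-one statement and into Theorem \ref{thm:Coates} for the growth formulas. Where you go beyond the paper is in actually checking the hypotheses of Theorem \ref{FT-rk} for $E_2$. The paper dispatches this with the single sentence that ``$E_2$ does not have split multiplicative reduction at any prime $v \mid N_2$ where $v \nmid N_1$,'' but what Theorem \ref{FT-rk} requires of $E_2$ is (a) $\Sigma_3(E_2)=\emptyset$ and (b) that $E_2$ is split multiplicative at the unique prime $q \mid m$, and the paper's remark, which only places $E_2$'s split-multiplicative primes inside $\mathrm{supp}(N_1)$, delivers neither. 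Your plan to extract (a) and (b) from the local Galois module $E_1[p]|_{G_{\Q_q}}$ is the right idea, and it is a real improvement over what the paper writes.

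That said, the gap you flag is genuine and it is present in the paper's argument too: nothing among the stated hypotheses of Theorem \ref{cong:growth} forces $E_1[p]$ to be ramified at $q$, i.e.\ $q\mid\overline N$. Hypothesis (ii) constrains $E_2$'s split-multiplicative primes, not $E_1$'s, and invoking it to conclude $q\mid\overline N$ presupposes that $E_2$ is split multiplicative at $q$, which is what one is trying to prove. If $E_1[p]$ is unramified at $q$ then $E_2$ could have good reduction at $q$; in that case, comparing semisimplifications gives Frobenius eigenvalues $\{1,q\}\bmod p$ for $E_2$, so over the residue field $k_v=\F_{q^{p-1}}$ of the inert prime $v$ one gets $p\mid\#\tilde E_2(k_v)$, hence $\Sigma_3(E_2)\neq\emptyset$ and the rank-one conclusion fails by the necessary direction of Theorem \ref{FT-rk}. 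So ramification at $q$ (equivalently $q\mid\overline N$) must be assumed, as is implicitly the case in Example \ref{cong:ex}. One further correction to your case analysis: to eliminate nonsplit multiplicative reduction of $E_2$ at $q$ you should compare $\{1,q\}$ with $\{-1,-q\}\bmod p$, which forces $q\equiv -1\pmod p$ (not $-q\equiv q$); since $q$ inert in $\Q(\mu_p)$ means $q$ has order $p-1$ modulo $p$, this excludes the nonsplit case for $p\geq 5$ but \emph{not} for $p=3$, where $q\equiv -1\pmod 3$ automatically, so for $p=3$ you need an argument beyond Frobenius eigenvalues (e.g.\ via the extension class of the Tate parametrization) to rule out the nonsplit case.
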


\begin{proof}
	Suppose that $X(K_\cyc, E_1[p^\infty]) = 0$. It follows from Theorem \ref{cong:cyc2} that we also have $\mu(E_2) = \lambda(E_2) = 0$ for any such elliptic curve $E_2/\Q$ satisfying the hypotheses. Moreover, since $E_2$ does not have split multiplicative reduction at any prime $v \mid N_2$ where $v \nmid N_1$, the hypotheses of Theorem \ref{FT-rk} applies to both $E_1$ and $E_2$. Therefore, $X(F_\infty, E_i[p^\infty])$ has $\Lambda(H)$-rank $1$ for each $i = 1, 2$. 		
	The systematic $\Z_p$-rank growth for both $E_1$ and $E_2$ along the $L_\infty/\Q$ as well as the $F_\infty/K$ tower follows from Theorem \ref{thm:Coates}.
\end{proof}

\printbibliography

\end{document}